\definecolor{skirv_blue}{RGB}{0,0,205}
\theoremstyle{plain}
\newtheorem{thm}{Theorem}[section]
\newtheorem*{thm*}{Theorem}
\newtheorem{cor}[thm]{Corollary}
\newtheorem*{cor*}{Corollary}
\newtheorem{prop}[thm]{Proposition}
\newtheorem{lem}[thm]{Lemma}
\theoremstyle{definition}
\newtheorem{dfn}[thm]{Definition}
\newtheorem{rem}[thm]{Remark}
\newtheorem{exmp}[thm]{Example}
\numberwithin{equation}{section}
\renewcommand{\O}{\mathcal{O}}
\renewcommand{\L}{\mathcal{L}}
\newcommand{\N}{\mathcal{N}}
\newcommand{\C}{\mathbb{C}}
\newcommand{\Q}{\mathbb{Q}}
\newcommand{\g}{\mathfrak{g}}
\DeclareMathOperator{\im}{im}
\DeclareMathOperator{\irr}{irr}
\DeclareMathOperator{\df}{def}
\DeclareMathOperator{\tr}{tr}
\DeclareMathOperator{\rk}{rk}
\DeclareMathOperator{\Spec}{Spec}
\DeclareMathOperator{\Pic}{Pic}
\DeclareMathOperator{\Bun}{Bun}
\DeclareMathOperator{\Hom}{Hom}
\DeclareMathOperator{\End}{End}
\DeclareMathOperator{\Aut}{Aut}
\DeclareMathOperator{\Ext}{Ext}
\DeclareMathOperator{\Tor}{Tor}
\title{A global analogue of the Springer resolution for $SL_2$}
\author{Michael Skirvin}
\begin{document}
\begin{abstract}
The global nilpotent cone $\N$ is a singular stack associated to the choice of an algebraic group $G$, a smooth projective curve $X$, and a line bundle $\L$ on $X$, which is of fundamental importance to the Geometric Langlands Program, and which is of emerging importance to
the Classical Langlands Program. In analogy with the ordinary
Springer resolution, we construct and study a resolution of
singularities of $\N$ in the special case where $G=SL_2$. As an
immediate application, we prove that $\N$ is equidimensional and
also provide an enumeration of its irreducible components. We hope
this is the first step in constructing a global Springer resolution
for an arbitrary reductive group.
\end{abstract}

\topmargin -0.5 truein \maketitle

\section{Introduction}
Fix a triple $(X,G,\L)$, in which $X$ denotes a smooth projective
curve, $G$ denotes a reductive algebraic group, and $\L$ indicates a
line bundle on $X$. Given this data, we may consider the Hitchin
moduli stack $\mathcal{M}$, as well as the Hitchin fibration
\begin{equation*}
\chi^{\mathrm{Hit}}:\mathcal{M} \to \mathcal{A},
\end{equation*}
where $\mathcal{A}$ denotes a vector space. Beginning with \cite{Hi}, the Hitchin fibration has received much attention in recent years, playing a significant role in the Geometric Langlands Program (\cite{BD}, \cite{F}), the Classical Langlands Program (\cite{Ngo}), and non-Abelian Hodge theory (\cite{Sim}, \cite{CHM}).

While there are various points of view on the Hitchin fibration, we
prefer to think of it as a global analogue of the adjoint quotient map associated to the Lie algebra $\g$ of $G$. Then, just as the
ordinary nilpotent cone $\N \mathrm{ilp}$ is defined as the fiber
over zero of the adjoint quotient map, we may define the global
nilpotent cone $\N$ to be the fiber over zero of the Hitchin
fibration. While $\N \mathrm{ilp}$ has played a significant role in
geometric representation theory for over $30$ years (see Section \ref{ss:ord spring}), $\N$ has only received a modest, but
important, study in the Geometric Langlands Program, and has
received almost no study whatsoever in the Classical Langlands
Program and in non-Abelian Hodge theory.\footnote{It is common to
study the restriction of the Hitchin fibration to various subspaces
of $\mathcal{A}$ which, in particular, do not contain zero.} This
relative lack of attention can largely be explained by the fact that $\N$ is the most difficult Hitchin fiber to understand, in part
because it is the most singular.

Thus, the purpose of this paper is to begin a program which aims to
understand the global nilpotent cone as thoroughly as the ordinary
nilpotent cone. To this end, we construct an explicit resolution of
singularities of $\N$ when $G=SL_2$, which we think of as a global
analogue of the Springer resolution $\widetilde{\N \mathrm{ilp}}$.

More specifically, we identify the Drinfeld/Laumon compactification
$\overline{\Bun}_B(X)$ (see Section \ref{ss:B compactification}) as
the appropriate global analogue of the flag variety $G/B$, and then
define the partial global Springer resolution $\widehat{\N}$ (see
Definition \ref{global springer}) to be a particular closed substack
of
\begin{equation*}
\N \underset{\Bun_G(X)}{\times} \overline{\Bun}_B(X).
\end{equation*}

\begin{thm*}[see Sections \ref{ss:genus 0} and \ref{ss:genus 1}]
If $X$ is a rational or elliptic curve, then $\widehat{\N}$ is a
resolution of singularities of $\N$.
\end{thm*}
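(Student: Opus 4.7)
The plan is to verify in turn the three defining properties of a resolution of singularities: properness of $\widehat{\N} \to \N$, birationality, and smoothness of the total space. Properness is essentially formal: the Drinfeld/Laumon compactification map $\overline{\Bun}_B(X) \to \Bun_G(X)$ is proper by construction, and properness is preserved under base change by $\N \to \Bun_G(X)$ as well as the subsequent closed immersion into the fibered product.

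For birationality I would single out the open locus $\N^\circ \subset \N$ of pairs $(E, \phi)$ with $\phi \neq 0$. Because $G = SL_2$ forces $\tr \phi = 0$, nilpotency is equivalent to $\phi^2 = 0$; for $\phi \neq 0$ the subsheaf $\ker(\phi) \subset E$ has generic rank $1$, and its saturation is a canonically determined line subbundle $L_\phi$ preserved by $\phi$, with $\phi|_{L_\phi} = 0$. This produces a canonical inverse section $\N^\circ \to \widehat{\N}$ landing inside $\Bun_B(X) \subset \overline{\Bun}_B(X)$ (no boundary needed), exhibiting the projection as an isomorphism over $\N^\circ$. It then remains to check that $\N^\circ$ is dense in every irreducible component of $\N$, which reduces to confirming that no top-dimensional component lies in the zero Higgs locus; this can be verified directly from the dimension counts in the genus-zero and genus-one cases.

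The main obstacle is smoothness of $\widehat{\N}$. The strategy is to exhibit $\widehat{\N}$ as a (twisted) vector bundle over $\overline{\Bun}_B(X)$, and then appeal to smoothness of the Drinfeld compactification itself in low genus. Over an interior point $(E, L) \in \Bun_B(X)$, a Higgs field preserving $L$ decomposes into two diagonal components ($L \to L \otimes \L$ and $E/L \cong L^{-1} \to L^{-1} \otimes \L$) together with an off-diagonal component valued in $L^2 \otimes \L$; the nilpotent condition forces both diagonal parts to vanish and identifies the fiber of $\widehat{\N}$ with $H^0(X, L^2 \otimes \L)$. Two points must then be checked: that this $H^0$ has locally constant dimension on each stratum, which in low genus follows from vanishing of $H^1(X, L^2 \otimes \L)$ by Riemann--Roch; and that the vector-bundle picture extends smoothly across the boundary strata of $\overline{\Bun}_B(X)$, where $L$ degenerates to a generalized subbundle with torsion cokernel. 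The boundary analysis is where the restriction to $X = \mathbb{P}^1$ or to an elliptic curve really matters: there one can invoke Grothendieck's splitting theorem, respectively Atiyah's classification of rank-$2$ bundles, to enumerate the strata explicitly and carry out the tangent space calculation case by case.
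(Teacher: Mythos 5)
Your outline correctly identifies the three things to check (properness, birationality, smoothness), and the properness argument matches the paper's. The smoothness sketch is also on the right track: the fiber of $\widehat{\N}_d \to \overline{\Bun}_{B,d}(X)$ really is $H^0(X,\lambda^{\otimes 2}\otimes\L)$, and in genus $0$ (or genus $1$, away from the degree-$(-\tfrac{1}{2}\deg\L)$ component) this has constant dimension by vanishing of $H^1$, so that the projection is a vector bundle and smoothness follows from smoothness of $\overline{\Bun}_B(X)$. You do not need Grothendieck's splitting theorem or Atiyah's classification, and the stratum-by-stratum analysis you propose is both unnecessary and does not, as written, establish smoothness of the total space.

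The genuine gap is in the birationality step. You claim that $\mu$ is an isomorphism over the locus $\N^{\circ}=\{(E,\varphi):\varphi\neq 0\}$, by pointing to the canonical section $(E,\varphi)\mapsto(\ker\varphi\subset E,\varphi)$. But the existence of this section does not make $\mu$ injective over $\N^{\circ}$, and in fact it is not. Unwinding Definition~\ref{global springer}, the fiber of $\mu$ over a generically regular $(E,\varphi)$ is indexed by effective divisors $D$ with $2D\leq\irr(\varphi)$ (with $\lambda=\ker(\varphi)(-D)$), so it is a singleton precisely when the irregularity divisor $\irr(\varphi)$ is reduced. The paper's example $(\O\oplus\O,\,\begin{pmatrix}0&z^2\\0&0\end{pmatrix})$ on $\mathbb{P}^1$ has $\varphi\neq 0$ yet lies in the closure of two components and has a $\mu$-fiber with more than one point. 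This is why the paper restricts to the strictly smaller open $\N^{\mathrm{gl,reg}}$, where $\irr(\varphi)$ is multiplicity-free, rather than to all of $\N^{\circ}$. Relatedly, your analysis completely omits condition (2) of Definition~\ref{global springer} ($\im(\varphi)\subset(\lambda\otimes\L)(-\df(\lambda))$). That condition is doing double duty: it is what makes the fibers of $\mu$ finite (indeed singletons on $\N^{\mathrm{gl,reg}}$) rather than projective spaces, and it is also what forces the fiber of $\widehat{\N}_d\to\overline{\Bun}_{B,d}(X)$ over a \emph{boundary} point $\lambda\subset E$ with nonzero defect to be $H^0(\lambda^{\otimes 2}\otimes\L)$ rather than the larger $H^0(\widetilde{\lambda}^{\otimes 2}\otimes\L)$. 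Without it, the fiber dimension would jump at the boundary and your vector-bundle picture would not ``extend smoothly'' as you assert. Finally, you also do not address condition (3), which is what handles the lowest-degree connected component in genus $1$: there one must restrict the base to $\overline{\Bun}^0_{B,d}(X)$ so that the fiber stays $1$-dimensional.
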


When the genus of $X$ is greater than $1$, $\widehat{\N}$ is not a
resolution of singularities of $\N$ because it is not smooth. In
order to understand its singularities, we instead study a simpler stack, which is denoted $C\mathcal{G}(X)$, by producing a smooth map
\begin{equation*}
\widehat{\N} \to C\mathcal{G}(X).
\end{equation*}
The stack $C\mathcal{G}(X)$ is easier to understand than $\widehat{\N}$ because it is simply the moduli of pairs $(\lambda,s)$, where $\lambda$ is a line bundle on $X$ such that $h^0(X,\lambda) \geq 1$, and $s \in H^0(X,\lambda)$. Thus, the geometry of $\widehat{\N}$ is closely tied to the classical geometry of line bundles and divisors on curves, as studied in \cite{ACGH}.

We are then able to define the global Springer resolution
$\widetilde{\N}$, which is birationally equivalent to
$\widehat{\N}$, by first resolving the singularities of
$C\mathcal{G}(X)$.

\begin{thm*}[see Section \ref{ss:higher genus}]
If the genus of $X$ is at least $2$, then $\widetilde{\N}$ is a
resolution of singularities of $\N$.
\end{thm*}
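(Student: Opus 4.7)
The strategy is to reduce the problem to resolving the auxiliary stack $C\mathcal{G}(X)$ and then to transport the resolution back to $\widehat{\N}$ via the smooth map $f : \widehat{\N} \to C\mathcal{G}(X)$ constructed in the earlier sections. The first and main step is to build a resolution $\rho : \widetilde{C\mathcal{G}}(X) \to C\mathcal{G}(X)$. A point of $C\mathcal{G}(X)$ is a pair $(\lambda, s)$ with $s \in H^0(X, \lambda)$, and its singularities lie precisely along the Brill--Noether strata where $h^0(X, \lambda)$ jumps. The natural candidate resolution uses the Abel--Jacobi map from symmetric products: on each degree-$d$ component, the assignment $D \mapsto (\O_X(D), s_D)$, where $s_D$ is the canonical section of $\O_X(D)$, realizes the total space of a natural line bundle on $\mathrm{Sym}^d X$ as a smooth stack $\widetilde{C\mathcal{G}}(X)^d$ mapping properly and birationally onto $C\mathcal{G}(X)^d$. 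Smoothness is automatic because $X$ is a smooth curve and hence $\mathrm{Sym}^d X$ is smooth.

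Second, I would define $\widetilde{\N} := \widehat{\N} \times_{C\mathcal{G}(X)} \widetilde{C\mathcal{G}}(X)$. Smoothness of $\widetilde{\N}$ follows by smooth base change along $f$: since $f$ is smooth and $\widetilde{C\mathcal{G}}(X)$ is smooth, so is $\widetilde{\N}$. It then remains to check that the composite $\widetilde{\N} \to \widehat{\N} \to \N$ is proper and birational onto $\N$. Properness is preserved under base change and composition, and $\widehat{\N} \to \N$ is proper by construction (it is a closed substack of a relative compactification of $\Bun_B(X)$). For birationality, I would identify a common dense open $U \subset \N$ over which $\widehat{\N} \to \N$ is already an isomorphism (by essentially the same argument used in the genus $0$ and $1$ cases) and over which $f$ takes values in the smooth locus of $C\mathcal{G}(X)$, so that $\rho$ is an isomorphism there as well.

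The principal obstacle I anticipate is ensuring that the resolution $\rho$ is sufficiently compatible with $f$ that the base change neither introduces spurious components nor leaves hidden singularities in $\widetilde{\N}$. In particular, one must analyze carefully the zero-section locus of $C\mathcal{G}(X)$ (where the vanishing divisor is undefined and the Abel--Jacobi image is not injective) and verify that the boundary of $\overline{\Bun}_B(X)$ interacts well with the contraction performed on the symmetric products. This interaction, rather than smoothness per se, is where the classical geometry of linear series on curves from \cite{ACGH} enters most forcefully, and I expect it to form the technical core of the argument in Section \ref{ss:higher genus}.
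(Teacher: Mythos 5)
Your approach is essentially the paper's: reduce to resolving $C\mathcal{G}(X)$, pull the resolution back along the smooth map $\widehat{\N} \to C\mathcal{G}(X)$ (Lemma~\ref{smooth lemma}), and check properness and birationality as in Propositions~\ref{proper} and~\ref{birational}. The only differences are cosmetic: the paper resolves $C\mathcal{G}_d(X)$ by the incidence substack $\widetilde{C\mathcal{G}}_d(X) \subset C\mathcal{G}_d(X) \times_{\mathcal{W}^0_d(X)} \mathcal{G}^0_d(X)$, which is the tautological line bundle over $\mathcal{G}^0_d(X) \simeq \mathrm{Sym}^d(X)/\mathbb{G}_m$ and hence agrees with your symmetric-product construction up to the trivial $\mathbb{G}_m$-gerbe; and the paper only applies the base change on the components with $2d + \deg(\L) \leq 2g-2$, retaining $\widehat{\N}_d$ unchanged on the higher-degree components where $C\mathcal{G}_{2d+\deg(\L)}(X)$ is already smooth (your uniform base change would still give a resolution there, but one that needlessly blows up the zero section).
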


As an immediate consequence, we obtain a proof that $\N$ is
equidimensional. While this fact is known when $\L$ is the canonical bundle $\omega_X$ (see, for example, \cite{Gi}), it does not seem to have appeared in print for general $\L$. Furthermore, we are able to give an enumeration of the irreducible components of $\N$

\begin{cor*}[see Sections \ref{ss:genus 0}-\ref{ss:higher genus}]
Suppose that $\deg(\L) \geq 2g$. Then $\N$ is equidimensional of dimension $\deg(\L)+g-1$. Its irreducible components are indexed by:
\begin{enumerate}
\item All integers $d > -\frac{1}{2}\deg(\L)$.
\item The square roots of $\L^{-1}$, of which there are $2^{2g}$.
\end{enumerate}
\end{cor*}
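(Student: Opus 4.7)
The plan is to deduce all assertions from the resolution of singularities established in the preceding theorems. Let $\mathscr{Y}$ denote the appropriate resolution of $\N$ (either $\widehat{\N}$ in genus at most $1$, or $\widetilde{\N}$ in genus at least $2$). Because the map $\mathscr{Y}\to\N$ is proper and birational with $\mathscr{Y}$ smooth, $\dim\N = \dim\mathscr{Y}$, the irreducible components of $\N$ are in bijection with those of $\mathscr{Y}$, and $\N$ is equidimensional whenever $\mathscr{Y}$ is. So it suffices to study $\mathscr{Y}$, which is tractable via its smooth map to $C\mathcal{G}(X)$ (or, in higher genus, to a resolution thereof).

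For the dimension, I would stratify $\mathscr{Y}$ by the degree $d$ of the line subbundle $\lambda = \ker\phi$. On the stratum $\mathscr{Y}_d$, the Higgs field is determined by a section $s \in H^0(X, \lambda^2 \otimes \L)$. Combining $\dim (\Bun_B)_d = -\chi(\O \oplus \lambda^2) = 2g - 2 - 2d$ with the Riemann--Roch value $h^0(\lambda^2 \otimes \L) = 2d + \deg\L - g + 1$ (valid once $2d + \deg\L \geq 2g - 1$, which under $\deg\L \geq 2g$ holds throughout the relevant range) gives $\dim \mathscr{Y}_d = \deg\L + g - 1$, independent of $d$.

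To enumerate the components I would exploit the smooth map $\mathscr{Y}\to C\mathcal{G}(X)$ sending $(E,\phi,\lambda)$ to $(\lambda^2\otimes\L, s)$. Writing $e = 2d + \deg\L$, the stack $C\mathcal{G}_e$ is irreducible for each $e \geq 1$: its open locus $\{s\neq 0\}$ is isomorphic as a stack to the symmetric product $X^{(e)}$ via $(\mu, s) \mapsto \mathrm{div}(s)$. The fiber of $\mathscr{Y}_d \to C\mathcal{G}_e$ at a point $(\mu, s)$ records a choice of square root of $\mu \otimes \L^{-1}$ together with an extension class; the monodromy of the resulting \'etale $2^{2g}$-cover (the action of $\Pic^0[2] \cong (\mathbb{Z}/2)^{2g}$) acts transitively on square roots, so $\mathscr{Y}_d$ is itself irreducible. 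Thus each integer $d > -\tfrac{1}{2}\deg\L$ contributes exactly one component of $\N$. The exceptional case $e = 0$, possible only when $\deg\L$ is even (so $d = -\tfrac{1}{2}\deg\L$), forces $\mu = \O_X$; here the monodromy is trivial and each of the $2^{2g}$ square roots of $\L^{-1}$ contributes a separate isolated component.

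The main subtlety is ruling out extra contributions from strata with $d < -\tfrac{1}{2}\deg\L$: in that range $\deg(\lambda^2\otimes\L) < 0$ forces $s = 0$, so the stratum maps into the zero section $\Bun_G \subset \N$, which has dimension $3g - 3 < \deg\L + g - 1$ under $\deg\L \geq 2g$ and is therefore absorbed into the closures of the listed components. Combined with the uniform dimension formula, this completes the enumeration and verifies equidimensionality.
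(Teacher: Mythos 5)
Your overall strategy---pass to the resolution, compute dimensions stratum-by-stratum, and enumerate components via monodromy over the $C\mathcal{G}$-base---matches the paper's plan in spirit, and your component argument is actually sharper than what the paper writes down. The input you invoke, namely that the Abel--Jacobi map $X^{(e)}\to\Pic_e(X)$ induces a surjection (in fact an isomorphism) on $H_1$ for every $e\geq 1$, does the real work: it shows the pullback of the $\Pic^0(X)[2]$-torsor to $X^{(e)}$, hence to $C\mathcal{G}_e(X)$, is connected, while for $e=0$ the base $C\mathcal{G}_0(X)\simeq\mathbb{A}^1/\mathbb{G}_m$ has no nontrivial finite covers, forcing the $2^{2g}$ sheets apart. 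This fills in a point the paper leaves implicit when it says irreducibility of $C\mathcal{G}_d(X,\L)$ ``follows'' from the \'{e}tale surjection onto $C\mathcal{G}_{2d+\deg(\L)}(X)$.

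The genuine gap is in the dimension count. You assert that the Riemann--Roch identity $h^0(\lambda^{\otimes 2}\otimes\L)=2d+\deg\L+1-g$ is usable because ``$2d+\deg\L\geq 2g-1$ holds throughout the relevant range'' under $\deg\L\geq 2g$. It does not. Take $g\geq 3$, $\deg\L=2g$, $d=-g+1$: then $e:=2d+\deg\L=2<g$, a generic degree-$e$ line bundle has $h^0=0$, the map $\widehat{\N}_d\to\overline{\Bun}_{B,d}(X)$ is not even dominant (it factors through the positive-codimension locus where $\lambda^{\otimes 2}\otimes\L\in\mathcal{W}^0_e(X)$), and the general fiber there has dimension $1$, not the nonpositive number $e+1-g$. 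Even in the range $g\leq e\leq 2g-2$ your stated hypothesis $e\geq 2g-1$ fails (there the answer happens to come out right at a generic point, but not by the argument you give). This is exactly the regime that forces the paper to reroute the computation through the unconditionally smooth map $\widehat{\N}_d\to C\mathcal{G}_d(X,\L)$, whose relative dimension $-\chi(X,\lambda^{\otimes 2})=-2d+g-1$ is constant in all cases, combined with $\dim C\mathcal{G}_d(X,\L)=2d+\deg\L$ for every $d$. You mention this smooth map at the outset but never actually use it in the dimension computation; deploying it removes the case split entirely and yields $\deg\L+g-1$ uniformly. The further blowup defining $\widetilde{\N}_d$ is then needed only to recover smoothness (hence to cleanly read off irreducible components of $\N$ from connected components of the resolution), not to fix the dimension, since $\widetilde{\N}_d\to\widehat{\N}_d$ is proper and birational.
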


In Section \ref{ss:lower degree}, a similar analysis is carried out for the the stable part of $\N$, which has only finitely many irreducible components.

\subsection{The ordinary Springer resolution}\label{ss:ord spring} Let $G$ be a complex reductive
group with Lie algebra $\g$. Then the adjoint action of $G$ on $\g$
induces the \em adjoint quotient map\em
\begin{equation*}
\chi:\g \to \mathfrak{c},
\end{equation*}
where $\mathfrak{c}$ denotes the adjoint quotient $\g/\!/G$. The
notation $\chi$ is chosen because when $G = GL_n$, the adjoint
quotient map is simply the map which associates to a matrix $A$ the
(non-leading coefficients of the) characteristic polynomial of $A$.
Generalizing the fact that the matrix $A$ is nilpotent if and only
if its characteristic polynomial is $t^n$, we define the \em
nilpotent cone \em of $\g$ to be
\begin{equation*}
\N \mathrm{ilp} := \chi^{-1}(0).
\end{equation*}
$\N \mathrm{ilp}$ is a singular algebraic variety which is normal and which possesses an explicit resolution of singularities known as the \em Springer resolution \em (\cite{S}). To describe this resolution, let $G/B$ denote the flag variety of $G$, which may be described as the variety of Borel subalgebras of $\g$. Letting
\begin{equation*}
\widetilde{\N \mathrm{ilp}} := \{ (x,\mathfrak{b}) \in \N
\mathrm{ilp} \times G/B: x \in \mathfrak{b} \},
\end{equation*}
the Springer resolution is given by the projection map
\begin{equation*}
\mu:\widetilde{\N \mathrm{ilp}} \to \N \mathrm{ilp}.
\end{equation*}
Furthermore, there is an isomorphism $\widetilde{\N \mathrm{ilp}}
\simeq T^{\ast}(G/B)$ between the total space of the Springer
resolution and the cotangent bundle of the flag variety. Letting
$\widetilde{\g}$ denote the variety of pairs $(x,\mathfrak{b}) \in
\g \times G/B$ such that $x \in \mathfrak{b}$, we have thus far
described the following spaces and maps:
\begin{equation}\label{classical springer diagram}
\begin{diagram}
G/B & \lTo & \widetilde{\N \mathrm{ilp}} \simeq T^{\ast}(G/B) &
\rInto &
\widetilde{\g}\\
&& \dTo_{\mu} && \dTo_p \\
&& \N \mathrm{ilp} & \rInto & \g \\
&& \dTo_{\chi} && \dTo_{\chi} \\
&& \{ 0 \} & \rInto & \mathfrak{c} \\
\end{diagram}
\end{equation}

Although this paper is only concerned with geometry, much of the
motivation for the Springer resolution comes from representation
theory. Given $x \in \N \mathrm{ilp}$, the corresponding fiber in
$\widetilde{\N \mathrm{ilp}}$ is known as a \em Springer fiber\em.
Then Springer theory, in its original form, is concerned with the
action of the Weyl group $W$ on the top-dimensional cohomology
groups of the Springer fibers, as constructed by T.A. Springer in
\cite{S}. Afterwards, several equivalent sheaf theoretic versions of the Weyl group action were constructed. We briefly describe three of them, although there are more. Our ultimate goal is to generalize at least one of these constructions to the global setting.
\subsubsection{Springer Theory via perverse sheaves}
The restriction of $p:\widetilde{\g} \to \g$ to the regular
semisimple locus $\g^{\mathrm{rs}}$ is the $W$-torsor denoted $\widetilde{\g}^{\mathrm{rs}}$. Hence $L := p_{\ast}\Q_{\widetilde{\g}^{\mathrm{rs}}}$ is a $W$-local system on
$\g^{\mathrm{rs}}$. Furthermore, the smallness of the map $p$
implies that $Rp_{\ast}\Q_{\widetilde{\g}}$ is isomorphic to the
intersection cohomology sheaf $IC_{\g}(L)$ associated to the local
system $L$. Finally, by proper base change, the \em Springer sheaf
\em $R\mu_{\ast}\Q_{\widetilde{\N \mathrm{ilp}}}$ is the restriction
of $IC_{\g}(L)$ to $\N \mathrm{ilp}$. The $W$-action on $L$ is then
carried over to the Springer sheaf, which, by the semismallness of
$\mu$, is a perverse sheaf which then decomposes into simple
components according to the regular representation of $W$. See
\cite{L} and \cite{BM}.

\subsubsection{Springer theory via convolution}\label{convolution}
Consider the Steinberg variety $Z := \widetilde{\N \mathrm{ilp}}
\underset{\N \mathrm{ilp}}{\times} \widetilde{\N \mathrm{ilp}}$.
Then $Z$ is a union of conormal bundles in $G/B \times G/B$ indexed
by $W$, whose closures form the irreducible components of $Z$. In
general (i.e., when $\mu$ is replaced by any proper map between
algebraic varieties and $Z$ is the corresponding fiber product), there is an algebra isomorphism
\begin{equation*}
H^{\ast}(Z,\Q) \simeq \Ext^{\ast}(R\mu_{\ast}\Q_{\widetilde{\N
ilp}},R\mu_{\ast}\Q_{\widetilde{\N \mathrm{ilp}}}).
\end{equation*}
However, in the case of the Steinberg variety and Springer
resolution, $Z$ is equidimensional of dimension $\dim_{\C}(\N
\mathrm{ilp})$, and the middle dimension algebra $H(Z) := H^{\dim_{\C}(\N \mathrm{ilp})}(Z,\Q)$ is a subalgebra of $H^{\ast}(Z,\Q)$ which induces the following two algebra isomorphisms:
\begin{equation*}
\Q[W] \simeq H(Z) \simeq \End(R\mu_{\ast}\Q_{\widetilde{\N
ilp}},R\mu_{\ast}\Q_{\widetilde{\N \mathrm{ilp}}}).
\end{equation*}
See \cite[Ch. 3]{CG}.

\subsubsection{Springer theory via nearby cycles}
Let $\psi_{\chi}$ denote the nearby cycles functor associated to the
adjoint quotient map. Then $\psi_{\chi}$ is a functor from (the
derived categories of) constructible sheaves on $\g$ to
constructible sheaves on $\N \mathrm{ilp}$. There is then an
isomorphism
\begin{equation*}
\psi_{\chi}(\Q_{\g}) \simeq R\mu_{\ast}\Q_{\widetilde{\N
\mathrm{ilp}}}.
\end{equation*}
Furthermore, letting $U^{\mathrm{rs}}$ denote the intersection of a small ball around the origin in $\mathfrak{c}$ with the regular semisimple part of $\mathfrak{c}$, there is an action of $\pi_1(U^{\mathrm{rs}})$ on $\psi_{\chi}(\Q_{\g})$.
Although $\pi_1(U^{\mathrm{rs}})$ is isomorphic to the Braid group
of $W$, the action on the Springer sheaf factors through $W$. See
\cite[Sec. 2.2]{Gr}, \cite{Sl}, and \cite{Ho}.

\subsection{What is meant by a global analogue?}\label{ss:global
analog} Returning to diagram \eqref{classical springer diagram}, a
closer examination reveals that all maps appearing in
\eqref{classical springer diagram} are $G$-equivariant and that all
spaces appearing in \eqref{classical springer diagram} are only
considered up to conjugation by $G$. We may thus assert that
ordinary Springer theory is in fact about the study of the
$\C$-points of the associated quotient stacks of \eqref{classical
springer diagram}:
\begin{equation}\label{modern springer diagram}
\begin{diagram}
G/B & \lTo & (\widetilde{\N \mathrm{ilp}}/G)(\C) & \rInto &
(\widetilde{\g}/G)(\C) \\
&& \dTo && \dTo \\
&& (\N \mathrm{ilp}/G)(\C) & \rInto & (\g/G)(\C) \\
&& \dTo && \dTo \\
&& \{ 0 \} & \rInto & \mathfrak{c} \\
\end{diagram}
\end{equation}
The use of the adjective `global' may then be roughly described as
an attempt to create a theory in which the point $\Spec(\C)$ is
replaced by a complex projective curve $X$ which is smooth and
connected. Although it will not suffice to literally replace
$\Spec(\C)$ with $X$, we will still take seriously the idea of
creating ``a family of Springer theories indexed by the curve $X$."

Luckily, global analogues of the Lie algebra $\g$, the adjoint
quotient $\mathfrak{c}$, and the adjoint quotient map $\chi$ have
already received significant study and attention in the form of the
\em Hitchin fibration \em(\cite{Hi})
\begin{equation*}
\chi^{\mathrm{Hit}}:\mathcal{M} \to \mathcal{A}.
\end{equation*}
$\mathcal{M}$ denotes the \em Hitchin moduli stack \em
parameterizing Higgs bundles on $X$, and $\mathcal{A}$ denotes the
\em Hitchin base space. \em Although we have not indicated so in the notation, the definition of $\mathcal{M}$ depends on choosing the
data of a smooth projective curve $X$, a reductive group $G$, and a
line bundle\footnote{In fact, the line bundle is fixed to be the
canonical bundle in the original Hitchin moduli space (\cite{Hi}).}
$\L$ on $X$. When $\L$ is the canonical bundle of $X$, we have an
identification
\begin{equation*}
\mathcal{M} \simeq T^{\ast}\Bun_G(X).
\end{equation*}
We refer the reader to Section \ref{ss:hitchin fibration} for definitions and for a justification of why the Hitchin fibration is a global analogue of $\chi:\g \to \mathfrak{c}$.

In analogy with ordinary Springer theory, we define the \em global
nilpotent cone \em
\begin{equation*}
\N := (\chi^{\mathrm{Hit}})^{-1}(0).
\end{equation*}
Having now identified global analogues for $\N \mathrm{ilp},\g,\mathfrak{c},$ and $\chi$, the obvious goal would
then be to find a resolution of singularities of $\N$, providing a
global analogue of the entirety of \eqref{modern springer diagram}.
This is precisely what we will do in this paper in the particular
case of $G = SL_2$. We hope to be able to extend our construction to $SL_n$ (and possibly arbitrary reductive $G$) in a future paper.

\subsubsection{Relation to Yun's global Springer theory} Recently,
Z. Yun has extensively developed a global Springer theory in
\cite{Y}. Rather than viewing the Hitchin fibration as a global
analogue of the adjoint quotient map, Yun views it as a global analogue of the map in \em local Springer theory \em which is most similar to $\widetilde{\g} \to \g$ in ordinary Springer theory\footnote{There is precedent for this point of view in \cite{Ngo}, in which Hitchin fibers prove to be more manageable than affine Springer fibers.}. While this paper is primarily concerned with the geometry of the global nilpotent cone, Yun's global Springer theory is largely concerned with extending the representation theoretic study of ordinary Springer theory to the global setting. In particular, Yun constructs the \em parabolic Hitchin moduli stack \em $\mathcal{M}^{\mathrm{par}}$, which, besides classifying Higgs bundles $(E,\varphi)$, also adds the additional data of a point $x \in X$ and a choice of $B$-reduction of the fiber $E_x$ which is compatible with $\varphi$. Then the \em parabolic Hitchin fibration
\em
\begin{equation*}
\chi^{\mathrm{par}}:\mathcal{M}^{\mathrm{par}} \to \mathcal{A}
\times X
\end{equation*}
assumes the role of $\widetilde{\g} \to \g$, and an action of the
affine Weyl group is defined on
$R\chi^{\mathrm{par}}_{\ast}\overline{\Q}_{\ell}$. Although we have
claimed that both the Hitchin fibration and parabolic Hitchin
fibration are analogues of $\widetilde{\g} \to \g$, the difference
essentially comes from differing notions of affine (i.e., local)
Springer fiber (\cite{Lu}). Under this analogy, Hitchin fibers
correspond to affine Springer fibers which live in the affine
Grassmannian, while parabolic Hitchin fibers correspond to affine
Springer fibers living in the affine flag variety.

We emphasize, though, that $\mathcal{M}^{\mathrm{par}}$ is only
studied over a subspace of the Hitchin base, which in characteristic $0$ coincides with the anisotropic subspace of $\mathcal{A}$, that
\em does not include the zero section. \em Therefore the global
nilpotent cone plays no role in Yun's global Springer theory.

\subsection{Further motivation}
We mention here two other sources of motivation for the study of
$\N$.
\subsubsection{Geometric Langlands}
So far, one of the few results regarding the global nilpotent cone
is a theorem stating that $\N$ is a Lagrangian substack of
$T^{\ast}\Bun_G(X)$ (\cite{La},\cite{Gi},\cite{Fa}). This has
several important consequences, as proven in \cite{BD}.
\begin{enumerate}
\item The Hitchin fibration is flat of relative dimension $\dim(\N)
= \dim(\Bun_G(X))$.
\item The stack $\Bun_G(X)$ is \em good. \em In particular, this
means that $T^{\ast}\Bun_G(X)$, which is a priori a derived
algebraic stack, is in fact an ordinary algebraic stack.
\item Any $D$-module on $\Bun_G(X)$ whose singular support is contained in $\N$ is
holonomic.
\end{enumerate}
It is the third point that is especially relevant to the Geometric
Langlands Program. Given a smooth projective curve $X$ along with
reductive group $G$ and its Langlands dual $G^{\vee}$, an important
problem in the Geometric Langlands Program is whether one can
associate to any $G^{\vee}$-local system $\mathcal{F}$, a $D$-module $M_{\mathcal{F}}$ on $\Bun_G(X)$, known as the \em Hecke eigensheaf,\em which has eigenvalue $\mathcal{F}$. While this problem is still open in general, A. Beilinson and V. Drinfeld have defined a substack of $\mathrm{LocSys}_{G^{\vee}}(X)$, the stack of
$G^{\vee}$-local systems on $X$, parameterizing so-called
$G^{\vee}$-opers, and for which they were able to construct
corresponding Hecke eigensheaves. The singular support of these
Hecke eigensheaves is the global nilpotent cone $\N$ (\cite{BD}).

\subsubsection{Fundamental Lemma}
The Fundamental Lemma, whose proof was recently completed B.C.
Ng\^{o} in \cite{Ngo} (see also the survey \cite{Na}), is an
identity relating the $\kappa$-orbital integral of an anisotropic
element $\gamma_G$ of a reductive group $G$ defined over a number
field $F$ with the stable orbital integral of the transferred
element $\gamma_H$, which lives in an endoscopic subgroup $H$ of
$G$. Ng\^{o} reduces the proof to the study of the cohomology of
Hitchin fibers. Similarly, there is also a variant known as the Weighted Fundamental Lemma, whose proof was completed by P-H. Chaudouard and G. Laumon (\cite{CL}) using methods similar to those of Ng\^{o}. However, in both cases, it is not necessary to understand all Hitchin fibers, but only those
living over the anisotropic\footnote{For technical reasons, it is
actually necessary in positive characteristic to work with a
slightly smaller subspace.} and generically regular semi-simple subspaces, respectively.

Since $0$ is not an element of the anisotropic or generically regular semisimple loci, neither the proof of the Fundamental Lemma nor of the Weighted Fundamental Lemma takes the global nilpotent cone into consideration. As communicated to the author by Ng\^{o}, it is nevertheless likely that the global nilpotent cone fits into the theory of orbital integrals. Indeed, orbital integrals associated to a regular semisimple element living in a sufficiently small neighborhood of the identity possess an asymptotic expansion in terms of orbital integrals over \em unipotent \em conjugacy classes, whose coefficients are known as \em Shalika germs \em (\cite{Sh}). According to Ng\^{o}, there should be a close relationship between these Shalika germs and the global nilpotent cone.

\subsection{Contents of the paper}
There are two main sections to this paper. In the first section, we
provide the background necessary to construct a global Springer
resolution for $SL_2$. This starts with a brief review of the ordinary Springer resolution, which motivates everything that follows. Global analogues of the nilpotent cone and the flag variety are then introduced. In the former case, this leads us to a discussion of the Hitchin fibration and global nilpotent cone. In the latter case, we review the Drinfeld and Laumon compactifications of the stack of $B$-bundles on $X$. Finally, we end by reviewing the geometry of line bundles and linear systems on $X$. This includes a definition of the stack $C\mathcal{G}(X)$, which later plays a crucial role in understanding and constructing a resolution of singularities of the global nilpotent cone.

The other main section of the paper revolves around the construction of a global Springer resolution, along with the study of its geometric properties. As an intermediate step, we first define the \em partial global Springer resolution \em $\widehat{\N}$. We then verify that the projection map from $\widehat{\N}$ to the global nilpotent cone is a proper, birational equivalence, which is furthermore a resolution of singularities when the genus of $X$ is at most one. In general, we are able to understand the singularities of $\widehat{\N}$ by producing a smooth map to $C\mathcal{G}(X)$ and studying its singularities instead. By pulling back a resolution of singularities of $C\mathcal{G}(X)$ to $\widehat{\N}$, we are finally able to define the \em global Springer resolution \em $\widetilde{\N}$. This allows us to obtain corollaries on the equidimensionality of the global nilpotent cone along with an enumeration of its irreducible components. We end the paper with a discussion of the stable part of the global nilpotent cone, as well as a description of $\widetilde{\N}$ in the case where the twisting line bundle is of low degree.

\subsection{Conventions and notation}
\subsubsection{Conventions}
We work over the complex numbers $\C$. All geometric objects
(schemes, stacks) discussed will be defined over $\C$. Similarly,
all algebraic groups will also be defined over $\C$.

We will work heavily throughout this paper with algebraic stacks. We
will think of an algebraic stack $Y$ as functor
\begin{equation*}
Y:\mathrm{Schemes}/\C \to \mathrm{Groupoids}
\end{equation*}
from the category of schemes over $\C$ to the category of groupoids.
Given a scheme $S$, the groupoid of $S$-points of $Y$ will be
denoted $Y(S)$. We will typically only describe the objects of
$Y(S)$, as it should be clear what the morphisms are. Given a
commutative $\C$-algebra $R$, we will also sometimes discuss the
$R$-points of $Y$, by which we will mean the $\Spec(R)$-points of
$Y$. For the types of stacks discussed in this paper, we recommend
\cite{He} for a concise introduction.

Although we will typically discuss the $S$-points of $Y$, we note
that it actually suffices to only consider $R$-points, using the
fact that $Y$ is a sheaf and the fact that any scheme is a limit of
affine schemes. Furthermore, since any ring is a colimit of finitely
generated rings, we may assume that $S$ is a scheme of finite type
over $\C$. This fact will be useful in allowing us to use scheme
theoretic results with finite type hypotheses, which we will do
implicitly without further mention.

Lastly, we note that there is no difference between the geometric
points of $S$ and the closed points of $S$, as $S$ is defined over
an algebraically closed field. When stating definitions and results
that apply more generally to geometric points of $S$, we will refer
to them as geometric points. Likewise for closed points.

\subsubsection{Notation}
$X$ will denote a smooth, projective, connected algebraic curve of
genus $g$, and $\L$ will denote a fixed line bundle on $X$. In
Section \ref{s:background}, there will be no assumptions placed on
the degree of $\L$, while in Section \ref{s:resolution} we will
assume that the degree of $\L$ is even and no less than $2g$ (with
the exception of Section \ref{ss:lower degree}).

Since we will make several infinitesimal arguments throughout the
paper, we will use the notation $S[\epsilon]$ (resp., $R[\epsilon]$)
to denote the product of the scheme $S$ with the dual numbers
(resp., the tensor product of the ring $R$ with the ring of dual numbers).

As the $S$-points of the stacks we work with will typically
parameterize $S$-families of objects on $X$, the product $X \times
S$ will be conveniently denoted by $X_S$. It is equipped with
projection maps
\begin{diagram}
X_S & \rTo^{\pi} & X\\
\dTo^p\\
S
\end{diagram}
Most of the objects we work with in this paper will be vector
bundles, and more generally coherent sheaves, on $X_S$. Given a
point $x \in X_S$ and a coherent sheaf $\mathcal{F}$ on $X_S$, we
will let $\mathcal{F}_x$ denote the \em fiber \em of $\mathcal{F}$
at $x$ (as opposed to the stalk). We use instead the notation
$i_x^{\ast}\mathcal{F}$ to denote the stalk of $\mathcal{F}$ at $x$.
We will only have occasion to discuss stalks in the proof of
Proposition \ref{defect S}.

We let $G$ denote a complex reductive algebraic group whose derived
subgroup $[G,G]$ is simply connected. The group $B$ denotes a Borel
subgroup of $G$, and $T$ denotes a maximal torus of $G$, which will
frequently be assumed to be $B/[B,B]$. For any group $H$, we use the
notation $\Bun_H(X)$ for the stack of principal $H$-bundles on $X$.
When dealing with principal $H$-bundles, recall that there is a
vector bundle associated to any choice of an $H$-bundle and a
representation of $H$. Given an $H$-bundle $E$ and a representation
$V$ of $H$, the associated vector bundle $E \overset{H}{\times}
V$ is denoted by $V_E$. In the special case of the adjoint action of $H$ on its Lie algebra $\mathfrak{h}$, we let $\mathrm{ad}(E) := E
\overset{H}{\times} \mathfrak{h}$.

Finally, we will frequently be in the situation of a group $H$
acting on a space $Y$. We will use the notation $Y/H$ for the
stack-theoretic quotient, and the notation $Y/ \! /H$ for the GIT
quotient.

\subsection*{Acknowledgements}
I would first like to thank S. Gunningham, O. Gwilliam, I. Le, and T. Stadnik for many useful conversations, and for their sustained interest in the topic of this paper. I am very grateful to D. Treumann for his consistently helpful insights and wisdom. I am also indebted to M. Emerton for graciously sharing his seemingly endless knowledge. I thank E. Zaslow for his detailed comments and corrections on a previous draft of this paper, as well as E. Getzler for for interesting comments and questions.

Most of all, I would like to thank my adviser, D. Nadler. I am
grateful for his consistent guidance, encouragement, insight, and
consummate mentorship. I am furthermore indebted to him for exposing
me to a beautiful array of mathematical ideas and modes of thought,
as well as for providing the initial stimulus to consider the topic
of this paper.

\tableofcontents

\section{Background}\label{s:background}

We begin this section with a brief review of the ordinary Springer
resolution of the nilpotent cone $\N \mathrm{ilp} \subset \g$. We
then review basic facts and definitions regarding the Hitchin
fibration and the global nilpotent cone $\N$.  After this we review
the stack $\Bun_B(X)$ and its two relative compactifications over
$\Bun_G(X)$, due to Drinfeld and Laumon. Finally, we end the section
by reviewing some classical geometry related to varieties of line
bundles and linear systems on curves, as well as their
stack-theoretic counterparts.

\subsection{The Springer resolution}\label{ss:springer resolution}

We begin by giving the definition of the nilpotent cone $\N
\mathrm{ilp} \subset \g$.  In order to do so in a way that
generalizes nicely to the global situation, we must first recall the
adjoint quotient map.

\begin{dfn}
We call $\mathfrak{c} := \g/ \! /G$ the \em adjoint quotient \em of
$\g$ and the corresponding map $\chi:\g \to \mathfrak{c}$ the \em
adjoint quotient map\em.
\end{dfn}

Recall that when $\g = \mathfrak{gl}_n,$ the adjoint quotient map
may be viewed as the map sending a matrix $A$ to the non-leading
term coefficients of its characteristic polynomial. Therefore $A^n =
0$ if and only if $\chi(A) = 0$.  This leads to the definition of
the nilpotent cone.

\begin{dfn}
$\N \mathrm{ilp} := \chi^{-1}(0)$ is called the \em nilpotent cone
\em of $\g$.
\end{dfn}

In order to properly make sense of the definition of $\N
\mathrm{ilp}$, we recall a result of Chevalley which states that
$\mathfrak{c} \simeq \mathbb{A}^r$, where $r$ denotes the rank of
$\g$ (\cite{Bour}). More precisely, this isomorphism is realized by
the existence of $r$ independent generators of the ring of functions of $\mathfrak{c}$. The generators are each homogeneous of degrees
$d_1, \ldots, d_r$. Letting $e_i$ denote the $i^{\mathrm{th}}$
exponent of $\g$, we have the relation
\begin{equation*}
d_i = e_i + 1.
\end{equation*}

It is straightforward to see from the definition that $\N
\mathrm{ilp}$ is a singular affine variety (for example, the origin
is always singular).  The singularities of $\N \mathrm{ilp}$ have
been completely classified in Lie theoretic terms, as follows.

\begin{dfn}
An element $x \in \g$ is said to be \em regular \em if its
centralizer $Z_{\g}(x)$ has the minimal possible dimension $r$ ($= \rk(\g)$).  We let $\g^{\mathrm{reg}} \subset \g$ denote the locus
of regular elements.
\end{dfn}

\begin{prop}\label{reg sm locus}
The smooth locus of $\N \mathrm{ilp}$ is given by the regular
elements of $\N \mathrm{ilp}$. Furthermore, $\N \mathrm{ilp}$ is a
normal algebraic variety.
\end{prop}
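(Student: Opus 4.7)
The plan is to reduce both assertions to Kostant's structure theorem for the adjoint quotient $\chi \colon \g \to \mathfrak{c}$. First I would recall that $\chi$ is flat with reduced fibers, so the scheme-theoretic fiber $\N\mathrm{ilp} = \chi^{-1}(0)$ is a global complete intersection of codimension $r$ in $\g$, hence is Cohen--Macaulay of pure dimension $\dim\g - r$. This both pins down $\dim \N\mathrm{ilp}$ and guarantees Serre's condition $S_2$ throughout $\N\mathrm{ilp}$.

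For the identification of the smooth locus, the complete intersection description yields $T_x \N\mathrm{ilp} = \ker(d\chi_x)$ for each $x \in \N\mathrm{ilp}$. Thus $\N\mathrm{ilp}$ is smooth at $x$ if and only if $d\chi_x \colon \g \to T_0 \mathfrak{c}$ has full rank $r$. The critical Lie-theoretic input, due to Kostant, is that the rank of $d\chi_x$ equals $r$ precisely on the regular locus $\g^{\mathrm{reg}}$; equivalently, the differentials at $x$ of a choice of basic $G$-invariant generators of $\C[\g]$ are linearly independent if and only if $\dim Z_\g(x) = r$. Granting this, the smooth locus of $\N\mathrm{ilp}$ is exactly $\N\mathrm{ilp} \cap \g^{\mathrm{reg}}$.

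For normality, I would invoke Serre's criterion in the form $R_1 + S_2$. The $S_2$ condition was already noted. For $R_1$, the previous paragraph identifies the singular locus with $\N\mathrm{ilp} \setminus \g^{\mathrm{reg}}$, so I need this complement to have codimension at least two in $\N\mathrm{ilp}$. This is the classical fact that the subregular nilpotent orbit has codimension exactly two in $\N\mathrm{ilp}$, while every other non-regular nilpotent orbit has strictly smaller dimension. The main obstacle in the whole argument will be Kostant's differential criterion relating $\mathrm{rank}(d\chi_x)$ to $\dim Z_\g(x)$; the standard route identifies the image of $d\chi_x$, via the Killing form, with a subspace controlled by the centralizer $Z_\g(x)$, at which point the rank count follows from the definition of regularity together with the principal $\mathfrak{sl}_2$-theory used to produce a transverse slice at a regular element.
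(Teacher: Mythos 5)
Your argument is correct and is precisely the standard route that the paper's cited reference (HTT, Section~10.3) follows: Kostant's structure theorem for the adjoint quotient, the differential criterion identifying $\{x : \operatorname{rank}(d\chi_x) = r\}$ with $\g^{\mathrm{reg}}$, the complete-intersection/Cohen--Macaulay consequence for $S_2$, and the codimension-two estimate on the subregular orbit to get $R_1$ for Serre's criterion. The paper itself gives no argument beyond the citation, so there is no genuinely distinct ``paper proof'' to contrast with; yours is the expected fleshed-out version.

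One small ordering wrinkle worth fixing: you open by ``recalling that $\chi$ is flat with reduced fibers,'' but reducedness of the scheme-theoretic fiber $\chi^{-1}(0)$ (equivalently, that the ideal generated by the basic invariants is radical) is itself one of the deeper conclusions of Kostant's theorem, not a cheap input. The cleaner logical order is: observe that $\chi^{-1}(0)$ is cut out by $r$ equations and (by the dense regular orbit, or by flatness of $\chi$ established via the Kostant section) has codimension exactly $r$, hence is a complete intersection and Cohen--Macaulay, giving $S_2$ and $S_1$; then use the differential criterion plus the subregular codimension-two fact to get $R_1$, hence $R_0$; $R_0 + S_1$ gives reducedness, and $R_1 + S_2$ gives normality. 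With that reordering, nothing is assumed that wasn't established.
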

\begin{proof}
See \cite[Sec. 10.3]{HTT}.
\end{proof}

In order to give some motivation for constructing a resolution of
singularities of $\N \mathrm{ilp}$ using Proposition \ref{reg sm
locus}, we recall that for a regular nilpotent $x \in \g$, there is
a unique Borel subalgebra containing $x$. If we identify the flag
variety $G/B$ with the variety of Borel subalgebras of $\g$, we may
define
\begin{equation*}
\widetilde{\N \mathrm{ilp}} := \{(x,\mathfrak{b}) \in \N
\mathrm{ilp} \times G/B: x \in \mathfrak{b} \}.
\end{equation*}

The projection map $\widetilde{\N \mathrm{ilp}} \to \N \mathrm{ilp}$
is proper (since $G/B$ is compact) and is a birational equivalence
(inducing an isomorphism over the regular part of $\N ilp$).
Finally, $\widetilde{\N \mathrm{ilp}}$ is smooth because it is a
vector bundle over $G/B$.

\begin{thm}
$\widetilde{\N \mathrm{ilp}} \to \N \mathrm{ilp}$ is a resolution of
singularities. Furthermore, $\widetilde{\N \mathrm{ilp}} \simeq
T^{\ast}(G/B)$.
\end{thm}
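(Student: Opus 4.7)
The plan is to verify the three defining properties of a resolution of singularities for $\mu: \widetilde{\N\mathrm{ilp}} \to \N\mathrm{ilp}$ (smooth source, proper map, birational equivalence), and then separately to establish the cotangent bundle identification.

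First I would realize $\widetilde{\N\mathrm{ilp}}$ as a homogeneous vector bundle. Identifying $G/B$ with the variety of Borel subalgebras, the incidence condition $x \in \mathfrak{b}$ cut out over a given Borel $\mathfrak{b}$ picks out only those nilpotent elements lying in $\mathfrak{b}$, and since an element of a Borel $\mathfrak{b}$ is nilpotent if and only if it lies in the nilradical $\mathfrak{n}_\mathfrak{b}$, the fiber of $\widetilde{\N\mathrm{ilp}} \to G/B$ over $\mathfrak{b}$ is precisely $\mathfrak{n}_\mathfrak{b}$. Choosing a basepoint $\mathfrak{b}_0$ with nilradical $\mathfrak{n}$, this assembles into the associated bundle $\widetilde{\N\mathrm{ilp}} \simeq G \times^B \mathfrak{n}$, which is manifestly smooth. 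Properness of $\mu$ follows since $\widetilde{\N\mathrm{ilp}}$ is a closed subvariety of $\N\mathrm{ilp} \times G/B$ and the second factor is projective.

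For birationality, I would invoke the classical fact (already alluded to in the excerpt) that every regular nilpotent element lies in a unique Borel subalgebra. Combined with Proposition \ref{reg sm locus}, which identifies the smooth locus of $\N\mathrm{ilp}$ with $\N\mathrm{ilp} \cap \g^{\mathrm{reg}}$, this open set is nonempty (the principal nilpotent is regular) and dense in $\N\mathrm{ilp}$, and $\mu$ restricts to an isomorphism over it. Hence $\mu$ is a proper birational map from a smooth variety to $\N\mathrm{ilp}$, i.e.\ a resolution.

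For the identification $\widetilde{\N\mathrm{ilp}} \simeq T^*(G/B)$, I would use the standard description $T^*(G/B) \simeq G \times^B (\g/\mathfrak{b})^*$. Choosing a nondegenerate $G$-invariant bilinear form on $\g$ (for instance the Killing form on the semisimple part, extended trivially on the center when $G$ is reductive), one identifies $(\g/\mathfrak{b})^*$ with the annihilator $\mathfrak{b}^\perp$ of $\mathfrak{b}$, and a direct root-space computation shows $\mathfrak{b}^\perp = \mathfrak{n}$. This yields a $B$-equivariant isomorphism $(\g/\mathfrak{b})^* \simeq \mathfrak{n}$, and passing to associated bundles gives $T^*(G/B) \simeq G \times^B \mathfrak{n} \simeq \widetilde{\N\mathrm{ilp}}$.

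The only mild obstacle is the uniqueness of the Borel containing a regular nilpotent, which is the crucial classical input; everything else is essentially formal manipulation of associated bundles and an application of the invariant form. One should also check that the isomorphism $\widetilde{\N\mathrm{ilp}} \simeq G \times^B \mathfrak{n}$ over $G/B$ is compatible with the $G$-action, so that the identification with $T^*(G/B)$ is canonical up to the choice of invariant form.
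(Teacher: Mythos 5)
Your proposal is correct and follows essentially the same outline the paper gives in the discussion immediately preceding the theorem (proper because $G/B$ is compact, birational because a regular nilpotent lies in a unique Borel, smooth because $\widetilde{\N\mathrm{ilp}} \to G/B$ is a vector bundle), with the paper deferring the details and the identification with $T^*(G/B)$ to \cite[Sec.\ 3.2]{CG}. You simply fill in the cotangent-bundle step via the invariant form identifying $(\g/\mathfrak{b})^*$ with $\mathfrak{n}$, which is exactly the standard argument found in that reference.
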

\begin{proof}
See \cite[Sec. 3.2]{CG}.
\end{proof}

The resolution of singularities $\widetilde{\N \mathrm{ilp}} \to \N
\! \mathrm{ilp}$ is known as the \em Springer resolution\em.

\begin{rem}\label{SL}
When $G = SL_n$, we wish to emphasize an alternative but equivalent
formulation of $\widetilde{\N \mathrm{ilp}}$.  Rather than
identifying $\mathrm{Fl}_n := SL_n/B$ with the variety of Borel
subalgebras, we may instead identify it with the variety of flags
$V_1 \subset V_2 \subset \ldots \subset V_n$ in which $V_i$ is a
vector space of dimension $i$. It is then straightforward to check
that the variety of pairs $(A,(V_1 \subset \ldots \subset V_n)) \in
\N \mathrm{ilp} \times \mathrm{Fl}_n$ such that $A \cdot V_i \subset
V_i$ for all $i$ is the same as $\widetilde{\N \mathrm{ilp}}$.
Furthermore, the condition that $A \cdot V_i \subset V_i$ is
equivalent to $A \cdot V_i \subset V_{i-1}$ since $A$ is nilpotent.
\end{rem}

\subsection{The Hitchin fibration and global nilpotent
cone}\label{ss:hitchin fibration}

In introducing the Hitchin fibration and global nilpotent cone, we
would like to emphasize their analogy with finite dimensional Lie
algebras and classical Springer theory as reviewed in Section
\ref{ss:springer resolution}. As discussed in Section \ref{ss:global analog}, we may obtain a first approximation to what we will call
`global Springer theory' by replacing the point $\Spec(\C)$ by the
curve $X$ in \eqref{modern springer diagram}. We begin by defining
the global analogue of the Lie algebra $\g$.

\begin{dfn}\label{hitchin moduli def}
Given the group $G$ and curve $X$, together with the additional data of a line bundle $\L$ on $X$, we define, following \cite{Ngo}, the
\em Hitchin moduli stack \em $\mathcal{M}_{X,G,\L}$ to be the
mapping stack $\Hom(X,\g_{\L}/G)$.  Recall that $\g_{\L}/G$ denotes the stack quotient and that $\g_{\L}$ denotes the associated vector bundle given by viewing $\g$ as a representation of $\mathbb{G}_m$. We will typically denote $\mathcal{M}_{X,G,\L}$ by $\mathcal{M}$ if the triple $(X,G,\L)$ is clear from the context.
\end{dfn}

\begin{rem}\label{global lie algebra}
Note that, after a choice of trivialization, we can roughly think of a $\C$-point of $\mathcal{M}$ as a collection of elements $\varphi_x \in \g$ for every point $x \in X$.
\end{rem}

While Definition \ref{hitchin moduli def} is conceptually useful for the transition to a global Springer theory, the following lemma
provides a more standard definition of the Hitchin moduli stack.

\begin{lem}\label{alternative hitchin lemma}
$\mathcal{M}$ is equivalent to the stack whose $S$-points consist of all pairs $(E,\varphi)$ where
\begin{enumerate}
\item  $E$ is a principal $G$-bundle on $X_S$ and
\item  $\varphi \in H^0(X_S,\mathrm{ad}(E) \otimes \pi^{\ast}\L)$.
\end{enumerate}
\end{lem}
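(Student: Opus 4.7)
The plan is to unpack the mapping stack $\Hom(X,\g_{\L}/G)$ via the universal property of the stack quotient and identify the resulting data with a Higgs pair.

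First I would fix the interpretation of $\Hom(X,\g_{\L}/G)$. Since $\g_{\L}$ is a $G$-equivariant vector bundle on $X$ (with $G$ acting trivially on $X$), the stack quotient $\g_{\L}/G$ comes with a projection to $X$, and the notation $\Hom(X,\g_{\L}/G)$ refers to the stack of $X$-sections of this projection. Thus an $S$-point is a morphism $\sigma : X_S \to \g_{\L}/G$ whose composition with $\g_{\L}/G \to X$ equals $\pi : X_S \to X$. (In the alternative absolute interpretation the underlying map $X_S\to X$ would be free, which would not match the statement.)

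Second, I would invoke the universal property of the stack quotient. A section $\sigma$ as above is equivalent to the data of (i) a principal $G$-bundle $E$ on $X_S$ (pulled back from the universal $G$-bundle $\g_{\L}\to \g_{\L}/G$), together with (ii) a $G$-equivariant morphism $E \to \g_{\L}$ over $\pi : X_S\to X$, or equivalently a $G$-equivariant morphism $E \to \pi^{\ast}\g_{\L}$ of $X_S$-schemes, where $\pi^{\ast}\g_{\L}$ carries the induced $G$-action. Since $E$ is a $G$-torsor, such an equivariant morphism is the same as a section of the associated bundle $E\times^G \pi^{\ast}\g_{\L}$ on $X_S$.

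Third, I would compute this associated bundle. Because $\mathbb{G}_m$ acts on $\g$ by scalars and this action commutes with the adjoint action of $G$, one has an identification $\g_{\L}\simeq \g\otimes_{\C}\L$ of $G$-equivariant vector bundles on $X$, with $G$ acting via adjoint on the first factor and trivially on $\L$. Hence
\begin{equation*}
E\times^G \pi^{\ast}\g_{\L} \;\simeq\; (E\times^G \g)\otimes \pi^{\ast}\L \;=\; \mathrm{ad}(E)\otimes \pi^{\ast}\L,
\end{equation*}
whose global sections are precisely $H^0(X_S,\mathrm{ad}(E)\otimes \pi^{\ast}\L)$. All of these identifications are manifestly natural in $S$ (base change of bundles and of sections), so they assemble into an equivalence of stacks. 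None of the individual steps is deep; the only real subtlety is fixing the sections convention for $\Hom$, after which the proof is a mechanical application of the quotient stack formalism together with the observation that the $\mathbb{G}_m$-twist commutes with the $G$-twist.
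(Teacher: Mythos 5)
Your proof is correct and follows the same route as the paper's, which simply records that an $S$-point of $\Hom(X,\g_{\L}/G)$ is an $X_S$-point of $\g_{\L}/G$ over $X$ and calls the rest tautological. You have spelled out the tautology — sections convention, universal property of the quotient stack, and the identification $E\times^G\pi^{\ast}\g_{\L}\simeq\mathrm{ad}(E)\otimes\pi^{\ast}\L$ via the commuting $\mathbb{G}_m$- and $G$-actions — which is exactly what the paper leaves implicit.
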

\begin{proof}
The proof is essentially tautological.  Recall that an $S$-point of
$\Hom(X,\g_{\L}/G)$ is the same as an $X_S$ point of $\g_{\L}/G$.
\end{proof}

The pair $(E,\varphi)$ appearing in Lemma \ref{alternative hitchin
lemma} will be referred to as a \em Higgs bundle\em, and the section $\varphi$ will be referred to as a \em Higgs field\em.

\begin{exmp}
Let us give an equivalent formulation of a Higgs bundle when $G =
SL_n$ which will be useful in subsequent sections. A principal
$SL_n$-bundle is equivalent to a rank $n$ vector bundle $E$ together with an isomorphism $\det(E) \simeq \O_X$. A Higgs field is then
equivalent to giving a twisted endomorphism $\varphi:E \to E \otimes \L$ such that $\tr(\varphi) = 0.$  We will conflate these equivalent notions of Higgs bundle whenever $G = SL_n$.
\end{exmp}

Having found a suitably global version of the Lie algebra $\g$, we now formulate a global version of the adjoint quotient $\mathfrak{c}$ and the adjoint quotient map $\chi$.

\begin{dfn}
Define the \em Hitchin base space \em $\mathcal{A}_{X,G,\L}$ (or
simply $\mathcal{A}$ if the context is clear) to be the space of
global sections of the affine bundle $\mathfrak{c}_{\L} := \L
\overset{\mathbb{G}_m}{\times} \mathfrak{c}$ on $X$.
\end{dfn}

The following lemma gives a more concrete (but less canonical)
description of $\mathcal{A}$. This description of $\mathcal{A}$ can
be originally found in \cite{Hit}.

\begin{lem}
Recalling the notation from Section 1.1, there is a non-canonical
isomorphism
\begin{equation}\label{hitchin base}
\mathcal{A} \simeq
\underset{i=1}{\overset{r}{\bigoplus}}H^0(X,\L^{\otimes d_i}).
\end{equation}
\end{lem}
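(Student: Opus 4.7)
The plan is to leverage the Chevalley isomorphism invoked in the excerpt to produce a $\mathbb{G}_m$-equivariant splitting of $\mathfrak{c}$, and then transport this splitting through the associated-bundle construction $\L \overset{\mathbb{G}_m}{\times} (-)$.

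First I would upgrade Chevalley's theorem to a $\mathbb{G}_m$-equivariant statement. Recall that $\mathbb{G}_m$ acts on $\g$ by scalar multiplication, and Chevalley supplies $r$ algebraically independent homogeneous generators $f_1, \ldots, f_r$ of the invariant ring $\C[\mathfrak{c}] = \C[\g]^G$ with $f_i$ of degree $d_i$. Each $f_i$ is then a $\mathbb{G}_m$-eigenfunction of weight $d_i$, so the map $(f_1, \ldots, f_r) : \mathfrak{c} \to \mathbb{A}^r$ is a $\mathbb{G}_m$-equivariant isomorphism where $\mathbb{A}^r$ carries the weighted linear $\mathbb{G}_m$-structure
\[
\mathbb{A}^r \simeq \bigoplus_{i=1}^{r} \mathbb{A}^1(d_i),
\]
with $\mathbb{A}^1(d)$ denoting the affine line with $\mathbb{G}_m$-action of weight $d$.

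Next I would push this splitting through the functor $\L \overset{\mathbb{G}_m}{\times} (-)$. Since this functor commutes with direct sums of $\mathbb{G}_m$-representations and satisfies $\L \overset{\mathbb{G}_m}{\times} \mathbb{A}^1(d) \simeq \L^{\otimes d}$ as line bundles on $X$, we obtain an isomorphism of vector bundles
\[
\mathfrak{c}_{\L} = \L \overset{\mathbb{G}_m}{\times} \mathfrak{c} \simeq \bigoplus_{i=1}^{r} \L^{\otimes d_i}.
\]
Taking global sections and using that $H^0$ commutes with finite direct sums of coherent sheaves then yields
\[
\mathcal{A} = H^0(X, \mathfrak{c}_{\L}) \simeq \bigoplus_{i=1}^{r} H^0(X, \L^{\otimes d_i}),
\]
as desired.

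There is no serious obstacle here; the only genuinely non-formal ingredient is Chevalley's theorem, which was already invoked in the preceding discussion. The non-canonicity asserted in the statement is precisely the non-canonicity of the generators $f_1, \ldots, f_r$: one may replace $f_i$ by $f_i$ plus any $G$-invariant polynomial of degree $d_i$ in $f_1, \ldots, f_{i-1}$, and different choices produce different concrete identifications, while the intrinsic object $\mathfrak{c}_{\L}$ and hence $\mathcal{A}$ is of course independent of any such choice.
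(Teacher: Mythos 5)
Your proposal is correct and follows essentially the same route as the paper: choose homogeneous Chevalley generators, identify $\mathfrak{c}$ with a weighted affine space $\bigoplus_i\mathbb{A}^1(d_i)$, push through $\L\overset{\mathbb{G}_m}{\times}(-)$ to get $\mathfrak{c}_\L\simeq\bigoplus_i\L^{\otimes d_i}$, then take global sections. You are somewhat more careful than the paper's terse one-line proof in that you separate the (intrinsic) $\mathbb{G}_m$-action on $\mathfrak{c}$ from the (non-canonical) choice of equivariant coordinates, but the argument is the same.
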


\begin{proof}
Choosing generators $f_1,\ldots,f_r$ of degrees $d_1, \ldots, d_r$
for the ring of functions of $\mathfrak{c}$ determines an action of
$\mathbb{G}_m$ on $\mathfrak{c}$, from which it is easily checked
that $\mathfrak{c}_{\L} \simeq
\underset{i=1}{\overset{r}{\bigoplus}}\L^{\otimes d_i}$.  This gives
the isomorphism of \eqref{hitchin base}, which is non-canonical due
to the choice of generators $f_1,\ldots,f_r$.
\end{proof}

In order to construct a global analogue of the map $\chi$, notice that $\chi$ is both $\mathbb{G}_m$-equivariant and $G$-invariant.
Therefore $\chi$ induces a map $\g_{\L}/G \to \mathfrak{c}_{\L}$,
from which we obtain a map
\begin{equation*}
\chi^{\mathrm{Hit}}:\mathcal{M} \to \mathcal{A}
\end{equation*}
known as the \em Hitchin fibration\em.

\begin{rem}
Following up on the informal commentary of Remark \ref{global lie
algebra}, the map $\chi^{\mathrm{Hit}}$ may be roughly thought of as associating to a Higgs bundle $(E,\varphi)$ the collection of
elements $\chi(\varphi_x) \in \mathfrak{c}$ for every point $x \in
X$.
\end{rem}

\begin{exmp}
In the particular case of $G = SL_n,$ the degrees of the generators
of $\mathfrak{c}$ are given by $2,\ldots,n,$ which are exactly the
degrees of the coefficients of the characteristic polynomial of an
element $A \in \mathfrak{sl}_n$, viewed as symmetric polynomials in
the eigenvalues of $A$.  Then, as alluded to above,
$\chi^{\mathrm{Hit}}:\mathcal{M} \to \mathcal{A}$ can be thought of
as a global characteristic polynomial map. Indeed, for any point $x
\in X$, the fiber $\chi^{\mathrm{Hit}}(E,\varphi)_x$ is nothing but
the characteristic polynomial of $\varphi_x$.  The absence of
$H^0(X,\L)$ appearing as a direct summand of $\mathcal{A}$ comes
from the condition that $\tr(\varphi) = 0$.
\end{exmp}

Having formulated a global analogue of $\chi$, we may now define the global nilpotent cone, first introduced in \cite{Lau} and \cite{La}.

\begin{dfn}
The \em global nilpotent cone \em $\N_{X,G,\L}$ (typically just
written as $\N$) is defined to be the reduced substack of
$(\chi^{\mathrm{Hit}})^{-1}(0)$, the fiber over zero of the Hitchin
fibration.  It is therefore a closed substack of the Hitchin moduli
stack.  A pair $(E,\varphi) \in \N$ will be referred to as a
nilpotent Higgs bundle, and $\varphi$ will be called a nilpotent
Higgs field.
\end{dfn}

\begin{exmp}
When $G = SL_n$, a Higgs bundle $(E,\varphi)$ is nilpotent if and
only if $\varphi^n = 0$, where $\varphi^n = (\phi \otimes
\mathrm{id}_{\L^{\otimes n-1}}) \circ \ldots \circ (\varphi \otimes
\mathrm{id}_{\L}) \circ \varphi$ is a map $E \to E \otimes
\L^{\otimes n}$.
\end{exmp}

Lastly, it will be useful to generalize the notion of a regular
element of $\g$ to the global setting.

\begin{dfn}\label{regular}
The \em regular \em locus of the Hitchin moduli stack is the
substack of $\mathcal{M}$ defined to be $\mathcal{M}^{\mathrm{reg}}
:= \Hom(X,\g^{\mathrm{reg}}_{\L}/G)$.  A Higgs bundle $(E,\varphi)
\in \mathcal{M}(S)$ will be called a \em regular Higgs bundle \em
(or just \em regular\em) if $(E,\varphi) \in \mathcal{M}^{\mathrm{reg}}(S)$. Furthermore, if $(E,\varphi) \in
\mathcal{M}(S)$ corresponds to a map $h:X_S \to \g_{\L}/G$, we will
say that $(E,\varphi)$ is \em generically regular \em if there
exists an open set $U \subset X$ such that the restriction of $h$ to $U_S$ maps to $\g^{\mathrm{reg}}_{\L}/G$.
\end{dfn}

\subsection{A relative compactification of $\Bun_B(X)$}\label{ss:B
compactification}

The goal of this section is to formulate an appropriate global
analogue of the flag variety $G/B$.  Let us begin by returning to
diagram (\ref{classical springer diagram}) and considering the space of maps $\Hom(X,G/B)$.  In order to give an explicit description of
$\Hom(X,G/B)$, we will use the Pl\"{u}cker embedding
\begin{equation*}
G/B\hookrightarrow \underset{i=1}{\overset{r}{\prod}}
\mathbb{P}(V_{\omega_i}^{\ast}),
\end{equation*}
where $V_{\omega_i}$ is the fundamental representation of $G$
associated to the fundamental weight $\omega_i$.  Since a map $X \to \mathbb{P}^n$ of degree $d$ is the same as specifying a line bundle
$\lambda$ of degree $-d$ with an embedding $0 \to \lambda
\to \O_X^{n+1}$ such that the quotient is locally free, we obtain the following Pl\"{u}cker description of $\Hom(X,G/B)$.

Giving a map $X \to G/B$ is equivalent to giving a collection of line \em subbundles \em
\begin{equation}\label{plucker}
\{\lambda_{\mu} \hookrightarrow \O_X \otimes V_{\mu}\}
\end{equation}
for every dominant weight $\mu$ satisfying the Pl\"{u}cker
relations\footnote{We omit the details of the Pl\"{u}cker relations.
The interested reader may consult \cite{Ku}.}.  A consequence of the
Pl\"{u}cker relations is that it suffices to specify line subbundles
only for the finitely many fundamental weights of $G$.

On the other hand, when $G = SL_n$, there is an alternative,
flag-like description of $\Hom(X,\mathrm{Fl}_n)$. In this case,
giving a map $X \to \mathrm{Fl}_n$ is equivalent to giving a flag of
\em subbundles \em
\begin{equation*}
V_1 \subset \ldots \subset V_{n-1} \subset \mathcal{O}_X^n
\end{equation*}
in which $\rk(V_i) = i$.

Even though the flag variety is complete, the space of maps $\Hom(X,G/B)$ is not.  In order to correct this defect, there is a compactification of $\Hom(X,G/B)$ due to Drinfeld known as the space of \em quasi-maps \em (\cite{Ku},\cite{Br}).  This space is obtained by taking the Pl\"{u}cker description of $\Hom(X,G/B)$ in \eqref{plucker} and requiring only that the $\lambda_{\mu}$ be \em subsheaves \em of $\O_X \otimes V_{\mu}$.  This means that the cokernel of $\lambda_{\mu} \hookrightarrow \O_X \otimes V_{\mu}$ may have torsion.

In a similar fashion, when $G = SL_n$, there is a compactification
of $\Hom(X,\mathrm{Fl}_n)$ due to Laumon given by considering flags
of \em subsheaves \em $V_1 \subset \ldots \subset V_{n-1} \subset
\mathcal{O}_X^n$. The resulting space is known as the space of \em
quasi-flags, \em which coincides with the space of quasi-maps if and only if $n=2$ (\cite{Ku}, \cite{Br}). A. Kuznetsov has shown that
the space of quasi-flags is a small resolution of singularities of
the space of quasi-maps in \cite{Ku}.

\begin{exmp}
Let us examine the compactification of
$\Hom(\mathbb{P}^1,\mathrm{Fl}_2)$, in which the Drinfeld and Laumon
compactifications coincide.  A degree one map $\mathbb{P}^1 \to
\mathrm{Fl}_2$ is equivalent to giving a subbundle
\begin{equation*}
0 \to \O(-1) \to \O \oplus \O.
\end{equation*}
Writing $z,w$ for the homogeneous coordinates of
$\mathbb{P}^1$, such a map may be written as
\begin{equation*}
\begin{pmatrix} az+bw \\
cz+dw \end{pmatrix}, \text{ with }a,b,c,d \in \C.
\end{equation*}
The condition that $\O(-1)$ be a subbundle of $\O \oplus \O$ is
equivalent to requiring that
\begin{equation*}
\det\begin{pmatrix}a & b \\ c & d
\end{pmatrix} \neq 0.
\end{equation*}
Therefore the space of maps $\Hom(\mathbb{P}^1,\mathrm{Fl}_2)$ is
isomorphic to the complement of the Pl\"{u}cker embedding of
$\mathbb{P}^1 \times \mathbb{P}^1$ inside $\mathbb{P}^3$.  The
corresponding space of quasi-maps/quasi-flags is simply
\begin{equation*}
\mathbb{P}\Hom(\O(-1),\O^2) \simeq \mathbb{P}^3.
\end{equation*}
\end{exmp}

For the purposes of finding a global Springer resolution,
$\Hom(X,G/B)$ and its compactifications will be insufficient. The problem is that in the Pl\"{u}cker (and flag-like, when $G =
SL_n$) description of $\Hom(X,G/B)$, all line bundles are subbundles of a trivial vector bundle.  The Hitchin moduli stack, on the other hand, lives over the entire moduli of $G$-bundles. The solution to the problem is to place $\Hom(X,G/B)$ and its compactifications into the larger context of principal $B$-bundles.

To begin, there is a map of stacks
\begin{equation*}
q:\Bun_B(X) \to \Bun_G(X)
\end{equation*}
arising from the inclusion $B \subset G$. It is then straightforward
to check that the fiber over the trivial $G$-bundle $E_G^0$ is
exactly $\Hom(X,G/B)$. Then, just as $\Hom(X,G/B)$ is not complete,
the map $q$ is not proper.  We would therefore like to have a
relative compactification of $\Bun_B(X)$ so that the corresponding
fiber over $E_G^0$ coincides with one of the compactifications of
$\Hom(X,G/B)$.

Relative compactifications generalizing quasi-maps and quasi-flags
exist, and are known as the Drinfeld and Laumon compactifications,
respectively (\cite{BG}). The Drinfeld compactification, denoted
$\Bun_B^D(X)$, exists for any reductive algebraic group $G$, and is
a generalization of the space of quasi-maps. The Laumon
compactification, denoted $\Bun_B^L(X)$, only exists when $G=SL_n$,
and is a generalization of the space of quasi-flags.

\begin{dfn}\label{drinfeld compactification}
The \em Drinfeld compactification \em $\Bun_B^D(X)$ of $\Bun_B(X)$
is the algebraic stack whose $S$-points are given by pairs
$(\mathcal{F}_T,\mathcal{F}_G)$, where $\mathcal{F}_T$ is a
$T$-bundle and $\mathcal{F}_G$ is a $G$-bundle. Furthermore, we
require that for every dominant weight $\mu$, there is an embedding
of coherent sheaves
\begin{equation*}
L^{\mu}_{\mathcal{F}_T} \hookrightarrow V^{\mu}_{\mathcal{F}_G}.
\end{equation*}
The collection of embeddings for every dominant weight $\mu$ is
required to satisfy the Pl\"{u}cker relations. See \cite{BG} for a
full description.
\end{dfn}

\begin{dfn}\label{laumon compactification}
The \em Laumon compactification \em $\Bun_B^L(X)$ of $\Bun_B(X)$ is
the algebraic stack whose $S$-points consist of flags of coherent
sheaves $V_1 \subset V_2 \subset \ldots \subset V_n$ on $X_S$ such
that:
\begin{itemize}
\item  $V_i$ is a vector bundle of rank $i$ for $1\leq i \leq n-1$
and $V_n$ is an $SL_n$-bundle.
\item  Each quotient $V_i/V_{i-1}$ is $S$-flat.
\end{itemize}
\end{dfn}

\begin{rem}\label{difference}
The difference between an $S$-point of $\Bun_B^L(X)$ and an
$S$-point of $\Bun_B(X)$ is that an $S$-point of $\Bun_B(X)$
consists of a flag of vector bundles as in Definition \ref{laumon
compactification} such that each quotient $V_i/V_{i-1}$ is
$X_S$-flat.  This is equivalent to saying that each $V_{i-1}$ is a
subbundle of $V_i$. Note that when $S = \Spec(\C)$, each quotient is
automatically $S$-flat, so each $V_{i-1}$ is simply a coherent
subsheaf of $V_i$.
\end{rem}

The following proposition shows that the compactifications
$\Bun_B^D(X)$ and $\Bun_B^L(X)$ each have desirable properties.

\begin{prop}\label{compactification}
The following properties hold for $\Bun_B^D(X)$ (for any reductive
algebraic group $G$) and for $\Bun_B^L(X)$ (when $G=SL_n$).
\begin{enumerate}
\item  There is a natural inclusion map $j^D:\Bun_B(X) \to \Bun_B^D(X)$ (resp., $j^L:\Bun_B(X) \to \Bun_B^L(X)$) making $\Bun_B(X)$ an open, dense
substack.
\item  There is a proper map $q^D:\Bun_B^D(X) \to \Bun_G(X)$ (resp.,
$q^L:\Bun_B^L(X) \to \Bun_G(X)$) such that $q^D \circ j^D = q$
(resp., $q^L \circ j^L = q$).
\end{enumerate}
\end{prop}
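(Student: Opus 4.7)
The plan is to treat the four substatements---definition of $j$, the identity $q \circ j = q$, openness with density of the image, and properness of $q^D$ and $q^L$---in parallel for the two compactifications, since the same underlying principle (every coherent subsheaf of a vector bundle on a curve admits a unique saturation) drives each part.

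First I would construct the maps. Given an $S$-point of $\Bun_B(X)$, i.e.\ a $B$-bundle $\mathcal{F}_B$ on $X_S$, form $\mathcal{F}_T := \mathcal{F}_B \overset{B}{\times} T$ and $\mathcal{F}_G := \mathcal{F}_B \overset{B}{\times} G$; for each dominant weight $\mu$, the $B$-reduction produces, via the highest-weight line in $V^\mu$, a line subbundle $L^\mu_{\mathcal{F}_T} \hookrightarrow V^\mu_{\mathcal{F}_G}$ satisfying the Plücker relations. This defines $j^D$, and the identity $q^D \circ j^D = q$ is tautological since both send $\mathcal{F}_B$ to $\mathcal{F}_G$. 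In the Laumon case, a flag of subbundles is already a special case of a flag of subsheaves (all quotients are $X_S$-flat, hence $S$-flat per Remark \ref{difference}), which defines $j^L$ and makes $q^L \circ j^L = q$ clear.

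Second, I would verify that each $j$ is an open immersion with dense image. Openness: $\Bun_B(X)$ is carved out of $\Bun_B^D(X)$ by the conditions that each cokernel $V^\mu_{\mathcal{F}_G}/L^\mu_{\mathcal{F}_T}$ be $X_S$-flat, which is open because the locus where a coherent sheaf is locally free is open, and the analogous statement holds in the Laumon case. Density: any point in the complement has some embedding whose cokernel has torsion; one constructs a one-parameter deformation by sliding this torsion along a pencil of sections of a suitable line bundle, producing a generic member with saturated cokernel and thus exhibiting a point of $\Bun_B(X)$ in every neighborhood.

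Finally, properness is checked by the valuative criterion. Given a DVR $R$ with fraction field $K$, a $G$-bundle $\mathcal{F}_G$ on $X_R$, and a Plücker datum $\{L^\mu_K \hookrightarrow V^\mu_{\mathcal{F}_G}|_{X_K}\}$ (or a flag of subsheaves) over $X_K$, the extension is obtained by taking the scheme-theoretic closure (equivalently, the saturation of the extension-by-zero) of each $L^\mu_K$ inside $V^\mu_{\mathcal{F}_G}$. This closure has no embedded associated points over the closed fiber, so the quotient is $R$-flat; uniqueness follows because any other extension would differ from the closure by a torsion subsheaf supported on the special fiber, which $R$-flatness rules out. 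The step I expect to be the main obstacle is verifying that the closure operation is compatible with the multilinear Plücker relations in the Drinfeld case---this is where one typically defers to the detailed analysis in \cite{BG}. In the Laumon case the corresponding check is cleaner, since a flag of subsheaves only needs each inclusion $V_{i-1} \subset V_i$ to have $R$-flat quotient, which the saturation construction provides directly.
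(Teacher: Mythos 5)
The paper's own proof of this proposition is just a citation: density is Prop.\ 1.2.3 in \cite{BG}, properness of $q^D$ is Prop.\ 1.2.2 in \cite{BG}, and the paper explicitly flags that the simply-connectedness of $[G,G]$ is essential for density. Your proposal takes the more ambitious route of sketching the arguments from scratch, and much of it is sound: the construction of $j^D,j^L$ via highest-weight lines, the tautological identity $q\circ j = q$, the openness claim (locally-free locus of a coherent sheaf is open, intersected over the finitely many fundamental weights), and the valuative-criterion outline for properness with the acknowledged deferral to \cite{BG} for Pl\"ucker compatibility are all reasonable.

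The genuine gap is in density. The phrase ``sliding this torsion along a pencil of sections of a suitable line bundle'' is not a proof and, more importantly, does not reflect the actual mechanism or difficulty. In the Drinfeld picture one must deform the $G$-bundle $\mathcal{F}_G$ itself (not merely the section) while keeping the $T$-bundle degree fixed so as to stay in the same connected component, and then argue that a generic nearby member is an honest $B$-reduction; in the Laumon picture one deforms the $V_i$ correspondingly. This is where the argument in \cite[Prop.\ 1.2.3]{BG} does real work, and it is precisely where the simply-connectedness hypothesis on $[G,G]$ enters---the paper remarks on this, and density can fail without it. Your sketch never engages with this hypothesis, and as written it would read as a purported proof valid for arbitrary reductive $G$, which is false. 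You should either reproduce the structure of the BG argument (noting where $\pi_1([G,G])=1$ is used) or, as the paper does, cite \cite[Prop.\ 1.2.3]{BG} for the density statement.
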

\begin{proof}
The density statement may be found in \cite[Prop 1.2.3]{BG}. We
remark that the simply-connectedness assumption on $[G,G]$ is
necessary here. A proof that $q^D$ is proper may be found in
\cite[Prop 1.2.2]{BG}.
\end{proof}

For later use, we now record some basic information about
$\Bun_B(X)$, and hence its compactifications by the density
statement of Proposition \ref{compactification}.  Letting $T :=
B/[B,B]$ be the maximal torus of $G$, there is an induced map
\begin{equation*}
r:\Bun_B(X) \to \Bun_T(X),
\end{equation*}
which may be extended to maps
\begin{align*}
r^D:\Bun_B^D(X)& \to \Bun_T(X),\\
r^L:\Bun_B^L(X)& \to \Bun_T(X).
\end{align*}
The map $r^D$ is the obvious projection map. The map $r^L$ is given by associating to a flag $V_1 \subset \ldots \subset V_n$ the $(n-1)$-tuple of line bundles
\begin{equation*}
(V_1,\det(V_2), \ldots, \det(V_{n-1})).
\end{equation*}

Each of these maps induce bijections on connected components. Let us therefore review why the connected components of $\Bun_T(X)$ are in bijection with the coweight lattice $\Lambda_G$ of $G$. Given a $T$-bundle $\mathcal{F}_T$ on $X$ and a divisor
\begin{equation*}
D = \sum \alpha_kx_k
\end{equation*}
on $X$ whose coefficients $\alpha_k$ are coweights, we may construct a new $T$-bundle $\mathcal{F}_T(D)$ which is defined by the property that for every weight $\mu$, there is an equality
\begin{equation*}
L^{\mu}_{\mathcal{F}_T(D)} = L^{\mu}_{\mathcal{F}_T}(\sum \langle
\alpha_k,\mu \rangle x_k).
\end{equation*}
We will say that $\mathcal{F}_T$ has degree $\alpha \in \Lambda_G$ if for every weight $\mu$,
\begin{equation*}
\deg(L^{\mu}_{\mathcal{F}_T}) = \langle \alpha, \mu \rangle.
\end{equation*}
Equivalently, $\mathcal{F}_T$ has degree $\alpha$ if and only if $\mathcal{F}_T = \mathcal{F}_T^0(D)$, where $\mathcal{F}_T^0$ is the trivial $T$-bundle and $D$ is a divisor whose coefficients sum to $\alpha$. This discussion is recorded in the following proposition.

\begin{prop}
$\pi_0(\Bun_B^L(X)) \simeq \pi_0(\Bun_B^D(X)) \simeq
\pi_0(\Bun_B(X)) \simeq \pi_0(\Bun_T(X)) \simeq \Lambda_G$.
\end{prop}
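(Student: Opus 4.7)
The plan is to derive all four identifications from the right-hand one, namely $\pi_0(\Bun_T(X)) \simeq \Lambda_G$, by showing that each of $r$, $r^D$, $r^L$ produces a clopen decomposition indexed by $\Lambda_G$ with connected strata. The identification for $\Bun_T(X)$ is essentially already supplied by the degree discussion preceding the proposition: the assignment $\mathcal{F}_T \mapsto \alpha$, where $\alpha \in \Lambda_G$ is determined by $\langle \alpha, \mu \rangle = \deg(L^\mu_{\mathcal{F}_T})$ for every weight $\mu$, is locally constant, and is surjective because $\mathcal{F}_T^0(D)$ realizes every class. Its fiber over $\alpha$ is a torsor under $\Pic^0(X)^{\mathrm{rk}(T)}$ and is therefore connected. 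This yields a clopen decomposition $\Bun_T(X) = \coprod_{\alpha \in \Lambda_G} \Bun_T^\alpha(X)$ into connected substacks.

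Next I would analyze $r$ in order to handle $\Bun_B(X)$. Writing $B = T \ltimes N$ with $N$ the unipotent radical of $B$, a $B$-bundle with fixed $T$-part $\mathcal{F}_T$ is equivalent to a torsor on $X$ for the twisted unipotent group scheme $N_{\mathcal{F}_T}$. Filtering $N$ by its lower central series and iterating (in characteristic zero) reduces this to a sequence of $H^1$ computations of vector bundles associated to $\mathcal{F}_T$, showing that the fiber of $r$ over $\mathcal{F}_T$ is a connected stack-theoretic affine space. Combined with connectedness of $\Bun_T^\alpha(X)$, this forces $\Bun_B^\alpha(X) := r^{-1}(\Bun_T^\alpha(X))$ to be connected, so $\pi_0(\Bun_B(X)) \simeq \Lambda_G$ via $r$.

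For the compactifications, I would combine the previous ingredients with the open dense immersions $j^D,j^L$ from Proposition \ref{compactification}, which satisfy $r^D \circ j^D = r$ and $r^L \circ j^L = r$. Setting $\Bun_B^{D,\alpha}(X) := (r^D)^{-1}(\Bun_T^\alpha(X))$ and analogously $\Bun_B^{L,\alpha}(X)$, continuity of $r^D$ and $r^L$ together with the clopenness of $\Bun_T^\alpha(X)$ produces clopen decompositions of $\Bun_B^D(X)$ and $\Bun_B^L(X)$ indexed by $\Lambda_G$. Within each stratum, the open substack $\Bun_B^\alpha(X)$ remains dense because density is inherited by clopen pieces, so each $\Bun_B^{D,\alpha}(X)$ (respectively $\Bun_B^{L,\alpha}(X)$) is the closure of the connected stack $\Bun_B^\alpha(X)$ and is therefore itself connected. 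All four identifications then follow.

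The main obstacle I anticipate is the usual concern that the boundary of a compactification can in principle merge or split connected components. Here this is circumvented because $r$ extends to $r^D$ and $r^L$, so the $\Lambda_G$-grading automatically extends to the compactifications and is manifestly clopen, rather than requiring a direct analysis of degenerations in the Drinfeld or Laumon boundary. The only non-formal input is thus the connectedness of the fibers of $r$, which rests on the nilpotency of $N$ and the vanishing of $H^{\geq 2}$ of coherent sheaves on the curve $X$.
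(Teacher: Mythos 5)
Your argument is correct and follows the same route the paper sketches: reduce everything to $\pi_0(\Bun_T(X))\simeq\Lambda_G$ via the degree map, and propagate through $r$, $r^D$, $r^L$ using connectedness of the fibers of $r$ and density of $\Bun_B(X)$ in its compactifications. The paper simply asserts that these maps induce bijections on $\pi_0$ (citing the density statement of Proposition \ref{compactification}); you have supplied the details it leaves implicit --- connectedness of $r$-fibers via the lower central series of the unipotent radical, and the clopen-plus-dense argument on the boundary --- which is exactly what is needed to make the paper's sketch rigorous.
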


To compute the dimension of irreducible components of $\N$, we will
need to first compute the dimension of the connected components of
$\Bun_B(X)$. The following proposition is similar to \cite[3.5]{FM}.

\begin{prop}\label{dimension B}
Let $\Bun_{B,\alpha}(X)$ denote the connected component of
$\Bun_B(X)$ corresponding to $\alpha \in \Lambda_G$.  Then the
dimension of $\Bun_{B,\alpha}(X)$ is given by $-2|\alpha| +
\dim(B)(g-1)$.
\end{prop}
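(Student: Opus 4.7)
The plan is to compute the dimension of $\Bun_{B,\alpha}(X)$ via its tangent complex. At a $B$-bundle $\mathcal{F}_B$, this tangent complex is $R\Gamma(X,\mathrm{ad}(\mathcal{F}_B))$, with $H^0$ giving infinitesimal automorphisms and $H^1$ giving first-order deformations. Since $X$ is a smooth projective curve, $H^i(X,\mathrm{ad}(\mathcal{F}_B)) = 0$ for $i \geq 2$, so all obstructions vanish, $\Bun_B(X)$ is smooth, and the dimension of the component $\Bun_{B,\alpha}(X)$ equals the virtual dimension
\[
h^1(X,\mathrm{ad}(\mathcal{F}_B)) - h^0(X,\mathrm{ad}(\mathcal{F}_B)) = -\chi(X,\mathrm{ad}(\mathcal{F}_B)).
\]
By Riemann-Roch on $X$, $\chi(X,\mathrm{ad}(\mathcal{F}_B)) = \deg(\mathrm{ad}(\mathcal{F}_B)) + \dim(B)(1-g)$, so everything reduces to computing $\deg(\mathrm{ad}(\mathcal{F}_B))$ for $\mathcal{F}_B$ in the component indexed by $\alpha$.

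To do this, I would exploit the $B$-stable filtration of $\mathfrak{b}$ (by, e.g., the lower central series of its nilradical, split against $\mathfrak{t}$) whose associated graded, as a $T$-representation, is $\mathfrak{t}\oplus\bigoplus_{\beta\in R^+}\g_\beta$, where $R^+$ is the set of roots appearing in $\mathfrak{b}$. Passing to associated bundles on $X$ and using that degree is additive in short exact sequences, the computation reduces to summing the degrees of the line bundles $L^\beta_{\mathcal{F}_T}$ over positive roots $\beta$, where $\mathcal{F}_T := r(\mathcal{F}_B)$; the trivial summand $\mathfrak{t}_{\mathcal{F}_T}$ contributes nothing. By the definition of degree for a $T$-bundle recalled just before the statement, $\deg(L^\beta_{\mathcal{F}_T}) = \langle\alpha,\beta\rangle$ since $\mathcal{F}_T$ has degree $\alpha$, giving
\[
\deg(\mathrm{ad}(\mathcal{F}_B)) = \sum_{\beta\in R^+}\langle\alpha,\beta\rangle = \langle\alpha,2\rho\rangle,
\]
where $\rho$ is the half-sum of the positive roots.

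Using the standard identity $\langle\alpha_i^\vee,\rho\rangle = 1$ for each simple coroot $\alpha_i^\vee$ and identifying the notation $|\alpha|$ with the sum of the simple-coroot coefficients of $\alpha$, we obtain $\langle\alpha,2\rho\rangle = 2|\alpha|$. Combining with the Riemann-Roch computation,
\[
\dim\Bun_{B,\alpha}(X) = -2|\alpha| - \dim(B)(1-g) = -2|\alpha| + \dim(B)(g-1),
\]
as claimed. The main technical step is the degree computation, which rests on the filtration of $\mathfrak{b}$ and on carefully matching sign conventions between the pairing $\langle\cdot,\cdot\rangle$ of coweights and weights, the choice of positive roots lying in $\mathfrak{b}$, and the normalization of $|\alpha|$; everything else is an application of Riemann-Roch and smoothness of $\Bun_B(X)$.
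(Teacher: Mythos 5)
Your proof is correct and uses essentially the same argument as the paper: smoothness of $\Bun_B(X)$ reduces the question to Riemann--Roch for $\mathfrak{b}_{\mathcal{F}_B}$, and the degree is computed to be $2|\alpha|$ by reading off the $T$-weights of $\mathfrak{b}$ (namely the positive roots, with the Cartan contributing degree zero). The only cosmetic difference is that you pass through a $B$-stable filtration of $\mathfrak{b}$ to handle an arbitrary $\mathcal{F}_B$, whereas the paper exploits connectedness of the component to choose an $\mathcal{F}_B$ admitting a $T$-reduction and then decompose $\mathfrak{b}_{\mathcal{F}_B}$ directly into a sum of line bundles.
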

\begin{proof}
Since $\Bun_B(X)$ is a smooth stack, to compute the dimension of
$\Bun_{B,\alpha}(X)$ it suffices to pick any $B$-bundle
$\mathcal{F}_B$ on $X$ of degree $\alpha$ and compute the dimension
of the naive tangent complex\footnote{See Section \ref{ss:divisor
geometry} for more details.} at $\mathcal{F}_B$. By standard
deformation theory,
\begin{equation*}
\dim_{\mathcal{F}_B}(\Bun_{B,\alpha}(X)) =
-\chi(X,\mathfrak{b}_{\mathcal{F}_B}).
\end{equation*}
Since $\mathfrak{b}_{\mathcal{F}_B}$ a vector bundle of rank
$\dim(B)$, it suffices by Riemann-Roch to compute the degree of
$\mathfrak{b}_{\mathcal{F}_B}$.  To this end, we may assume that
$\mathcal{F}_B$ has a $T$-reduction coming from some $T$-bundle
$\mathcal{F}_T$ of degree $\alpha$.  Then
$\mathfrak{b}_{\mathcal{F}_B} = \mathcal{F}_T
\overset{T}{\times}\mathfrak{b}$.  Since
\begin{equation*}
\mathfrak{b} = \underset{\theta \in
\mathcal{R}^+}{\bigoplus}\g_{\theta}
\end{equation*}
is a direct sum of positive root spaces, we see that
\begin{equation*}
\deg(\mathcal{F}_T \overset{T}{\times}\mathfrak{b}) =
\underset{\theta \in \mathcal{R}^+}{\sum}\langle \alpha,\theta
\rangle = 2|\alpha|.
\end{equation*}
\end{proof}

\begin{cor}
The dimension of $\Bun^D_{B,\alpha}(X)$ and of
$\Bun^L_{B,\alpha}(X)$ is given by $-2|\alpha| + \dim(B)(g-1)$.
\end{cor}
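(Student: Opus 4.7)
The plan is to deduce the corollary essentially immediately from Proposition \ref{dimension B} combined with the density statement of Proposition \ref{compactification}. By Proposition \ref{compactification}(1), the natural maps $j^D:\Bun_B(X) \to \Bun_B^D(X)$ and $j^L:\Bun_B(X) \to \Bun_B^L(X)$ realize $\Bun_B(X)$ as an open, dense substack of each compactification. Since the connected components of all three stacks are in bijection with $\Lambda_G$ (via the proposition preceding Proposition \ref{dimension B}), the immersion $j^D$ restricts to an open, dense immersion
\begin{equation*}
\Bun_{B,\alpha}(X) \hookrightarrow \Bun^D_{B,\alpha}(X),
\end{equation*}
and similarly for $j^L$.

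First I would note that the dimension of any algebraic stack can be computed on any open, dense substack, provided one knows that the ambient stack is irreducible (or at least pure-dimensional along the dense locus). Thus the task reduces to verifying that $\Bun_{B,\alpha}(X)$ is irreducible, from which it follows that its closure inside $\Bun^D_{B,\alpha}(X)$ (respectively $\Bun^L_{B,\alpha}(X)$) is an irreducible component of the same dimension, and density forces this closure to be the entire component.

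For irreducibility, I would appeal to the fact that $\Bun_B(X)$ is smooth, together with the fact that the projection $r:\Bun_B(X) \to \Bun_T(X)$ to a fixed component $\Bun_{T,\alpha}(X)$ has connected (in fact affine) fibers parametrizing extensions in the unipotent radical of $B$, while $\Bun_{T,\alpha}(X)$ is itself connected. A smooth connected algebraic stack is irreducible, so each $\Bun_{B,\alpha}(X)$ is irreducible.

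Combining these observations with Proposition \ref{dimension B} yields
\begin{equation*}
\dim \Bun^D_{B,\alpha}(X) = \dim \Bun^L_{B,\alpha}(X) = \dim \Bun_{B,\alpha}(X) = -2|\alpha| + \dim(B)(g-1).
\end{equation*}
The only potential obstacle is the pure-dimensionality claim for the compactifications, but since we know $\Bun_{B,\alpha}(X)$ is irreducible and open dense in each compactified component, no further work is needed: each compactified component is irreducible of the asserted dimension.
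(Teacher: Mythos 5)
Your proof is correct and follows the same route as the paper, which deduces the corollary directly from the density statement of Proposition \ref{compactification} together with Proposition \ref{dimension B}. You have simply spelled out the standard justification (irreducibility of each connected component of $\Bun_B(X)$, so that openness and density pin down the dimension of the compactified component) that the paper leaves implicit.
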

\begin{proof}
This is a direct consequence of the density statement of Proposition
\ref{compactification} and from Proposition \ref{dimension B}.
\end{proof}

\subsection{Geometry of linear systems on a curve}\label{ss:divisor geometry}

In this section we will review some classic results about line
bundles and linear systems on curves.  The main reference for this
section is \cite{ACGH}.  We will assume that the genus of $X$ is at
least two. We will largely be concerned with the following two
classical varieties (and their stack-theoretic counterparts).
\begin{align*}
W_d^r(X)& = \{ \lambda \in \Pic_d(X): h^0(X,\lambda) \geq r+1 \}\\
G_d^r(X)& = \{g_d^r\text{'s on }X\}
\end{align*}

Recall that $g_d^r$ is the classical notation for a degree $d$, rank
$r$ linear system on $X$.  Therefore, in less concise notation,
$G_d^r(X)$ is the variety of pairs $(\lambda,V)$ where $\lambda \in
\Pic_d(X)$ and $V \subset H^0(X,\lambda)$ is a subspace of dimension
$r+1$.  For the purposes of global Springer theory, we will be
especially interested in $G_d^0(X)$.  The two varieties $W_d^r(X)$
and $G_d^r(X)$ are related by the fact that the scheme-theoretic
image of $G_d^r(X)$ under the projection map $G_d^r(X) \to
\Pic_d(X)$ is precisely $W_d^r(X)$. The next result summarizes the
crucial geometric properties of $W_d^0(X)$ and $G_d^0(X)$.

\begin{thm}\label{grd geometry}
(i) $G^0_d(X)$ is smooth of dimension $d$ for all $d$.\\
(ii) $W^0_d(X)$ is reduced, irreducible, normal, and Cohen-Macaulay
of dimension $d$.  If $d<g$, then the singular locus of $W^0_d(X)$
is $W^1_d(X)$.
\end{thm}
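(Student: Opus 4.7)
The plan is to reduce both parts to classical facts about Abel--Jacobi maps and degeneracy loci, referring to \cite{ACGH} for the most delicate arguments. For (i), I would first identify $G^0_d(X)$ with the symmetric product $X^{(d)}$. A $\C$-point of $G^0_d(X)$ is a pair $(\lambda, V)$ with $\lambda \in \Pic_d(X)$ and $V \subset H^0(X,\lambda)$ one-dimensional; any nonzero $s \in V$ has a well-defined zero divisor, independent of the choice of generator, which is an effective divisor of degree $d$ on $X$. Conversely an effective divisor $D \in X^{(d)}$ produces the pair $(\O_X(D),\, \C \cdot s_D)$, where $s_D$ is the tautological section of $\O_X(D)$. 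These assignments are mutually inverse and extend naturally to families (the symmetric product represents the functor of relative effective degree-$d$ divisors), yielding an isomorphism $G^0_d(X) \simeq X^{(d)}$. Since $X$ is a smooth curve, $X^{(d)}$ is smooth of dimension $d$, giving (i).

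For (ii), I would work with the Abel--Jacobi morphism $u : X^{(d)} \to \Pic_d(X)$, $D \mapsto \O_X(D)$, whose set-theoretic image is precisely $W^0_d(X)$. Irreducibility of $W^0_d(X)$ is immediate from irreducibility of $X^{(d)}$, and for $d<g$ the general line bundle in $W^0_d$ has $h^0 = 1$ (the locus $h^0 \geq 2$ is the proper subvariety $W^1_d$), so $u$ is generically one-to-one onto its image and $\dim W^0_d(X) = d$. To obtain the deeper homological properties I would use the determinantal description: let $\mathcal{L}$ denote the Poincar\'e line bundle on $X \times \Pic_d(X)$ and fix an effective divisor $D_0$ on $X$ of sufficiently large degree so that $R^1\pi_*\mathcal{L}(D_0) = 0$, where $\pi$ is the projection to $\Pic_d(X)$. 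Pushing forward the short exact sequence
\begin{equation*}
0 \to \mathcal{L} \to \mathcal{L}(D_0) \to \mathcal{L}(D_0)|_{D_0 \times \Pic_d(X)} \to 0
\end{equation*}
exhibits $\pi_*\mathcal{L}$ and $R^1\pi_*\mathcal{L}$ as the kernel and cokernel of an explicit morphism $\phi : E \to F$ of vector bundles on $\Pic_d(X)$, and $W^r_d(X)$ is exactly the $(\rk(E)-r-1)$-th degeneracy locus of $\phi$. The classical theory of determinantal varieties (see \cite{ACGH}, Ch.\ II) then ensures that such a locus, when it has the expected codimension, is Cohen--Macaulay and, in characteristic zero, reduced and normal, with singular locus contained in the next deeper degeneracy locus. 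Here the expected codimension $g-d$ matches the dimension obtained via Abel--Jacobi, so these conclusions apply and yield $\mathrm{Sing}(W^0_d(X)) \subseteq W^1_d(X)$.

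The main obstacle is the reverse inclusion $W^1_d(X) \subseteq \mathrm{Sing}(W^0_d(X))$ for $d<g$. This requires an infinitesimal computation identifying the Zariski tangent space to $W^0_d(X)$ at $\lambda$ with the annihilator, under Serre duality, of the image of the cup-product (Petri) map $\mu: H^0(X,\lambda) \otimes H^0(X,\omega_X \otimes \lambda^{-1}) \to H^0(X,\omega_X)$. At points $\lambda \in W^1_d(X)$ one checks that $\dim T_\lambda W^0_d(X) = g - \mathrm{rk}(\mu)$ is strictly greater than $d$, forcing $\lambda$ to be a singular point. I would cite \cite{ACGH}, Ch.\ IV for this computation rather than reproduce it, as it is a standard and classical argument orthogonal to the main constructions of the present paper.
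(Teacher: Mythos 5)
The paper's proof of this theorem is a one-line citation to \cite[Cor.~4.5]{ACGH}, and your proposal reconstructs the underlying classical argument. Parts (i) and the determinantal/Abel--Jacobi portion of (ii) are correct and are indeed the argument ACGH gives: $G^0_d(X)\simeq X^{(d)}$, and $W^0_d(X)$ is the top degeneracy locus of a vector-bundle map constructed from the Poincar\'e bundle, of the expected codimension $g-d$, hence Cohen--Macaulay, reduced, with $\mathrm{Sing}(W^0_d(X))\subseteq W^1_d(X)$; normality then follows from $S_2$ together with the codimension estimate $\dim W^1_d \leq d-2$ (Martens), which you might want to mention explicitly.

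The final step, however, has a genuine error. You assert that at $\lambda\in W^1_d(X)$ the Zariski tangent space $T_\lambda W^0_d(X)$ is the Serre-duality annihilator of the image of the Petri map $\mu\colon H^0(\lambda)\otimes H^0(\omega_X\otimes\lambda^{-1})\to H^0(\omega_X)$, and hence has dimension $g-\mathrm{rk}(\mu)>d$. This is the wrong formula at such points, and the stated inequality can actually fail. The identification $T_\lambda W^r_d(X) = (\mathrm{Im}\,\mu)^\perp$ is valid only at points of the open determinantal stratum, i.e.\ when $h^0(\lambda)=r+1$; for $W^0_d(X)$ that means $\lambda\notin W^1_d(X)$. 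At $\lambda\in W^1_d(X)$ all $(\mathrm{rk}\,E-1)\times(\mathrm{rk}\,E-1)$ minors of the defining bundle map already vanish, so the differentials of the $(\mathrm{rk}\,E)\times(\mathrm{rk}\,E)$ minors cutting out $W^0_d(X)$ vanish identically and $T_\lambda W^0_d(X)=H^1(X,\O_X)$ in full, of dimension $g>d$. (The paper itself records this dichotomy in equations (2.5)--(2.6) later in Section 2.4.) By contrast, $g-\mathrm{rk}(\mu)$ at $\lambda\in W^1_d$ is the dimension of $T_\lambda W^1_d(X)$, not of $T_\lambda W^0_d(X)$, and need not exceed $d$: for a hyperelliptic curve of genus $3$ with $\lambda$ the $g^1_2$, one has $\mathrm{rk}(\mu)=3=g$, so $g-\mathrm{rk}(\mu)=0<2=d$. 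Replacing your formula with $T_\lambda W^0_d(X)=H^1(X,\O_X)$ at points of $W^1_d(X)$ repairs the argument and completes the reverse inclusion $W^1_d(X)\subseteq\mathrm{Sing}(W^0_d(X))$.
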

\begin{proof}
See \cite[Cor. 4.5]{ACGH}.
\end{proof}

The assumption that $d<g$ in Theorem \ref{grd geometry} is present simply because $W^0_d(X)$ is smooth whenever $d \geq g$. Indeed, given $\lambda \in \Pic_d(X)$ with $d \geq g$, the Riemann-Roch formula implies that
\begin{equation*}
\chi(X,\lambda) = d+1-g \geq 1.
\end{equation*}
Therefore $W^0_d(X) = \Pic_d(X)$ is smooth.

In Section \ref{ss:higher genus}, we will need variants of $W_d^0(X)$ and $G_d^0(X)$, which we now discuss.  First, recall that the Picard stack is obtained by taking the stack quotient $\Pic(X)/\mathbb{G}_m$ of the Picard variety by a trivial action of $\mathbb{G}_m$.  To avoid confusion, the Picard stack will be denoted $\Bun_{\mathbb{G}_m}(X)$. There are then stack-theoretic versions of $W_d^r(X)$ and $G_d^r(X)$:
\begin{equation*}
\begin{split}
\mathcal{W}_d^r(X)& = W_d^r(X)/\mathbb{G}_m\\
\mathcal{G}_d^r(X)& = G_d^r(X)/\mathbb{G}_m
\end{split}
\end{equation*}

To have a good understanding of the partial global Springer resolution defined in Section \ref{ss:construction}, it is necessary to give a precise formulation of the $S$-points of $\mathcal{W}_d^r(X)$ and $\mathcal{G}_d^r(X)$, as described in \cite[Sec. 4.3]{ACGH}. In order to do so, we now review basic properties of Fitting ideals, which may be found in \cite[Appendix]{MW}. To begin, assume that $M$ is a finitely presented module over a commutative ring $R$. Given a presentation \begin{equation*}
R^{\oplus a} \overset{A}{\to} R^{\oplus b} \to M \to 0,
\end{equation*}
define, for $h \geq 0$, the $h^{\mathrm{th}}$ \em Fitting ideal \em of $M$ to be the ideal of $R$ generated by the $(b-h) \times (b-h)$ minors of $A$. It is denoted $F^h(M)$ or $F^h_R(M)$. We adopt the convention that $F^h(M) = R$ if $a < b-h$. The following properties of $F^h(M)$ are well-known.
\begin{enumerate}
\item $F^h(M)$ is independent of the presentation chosen for $M$.
\item If $N$ is also an $R$-module, then $F^h(M \oplus N) = F^h(M)F^h(N)$.
\item Fitting ideals are stable under base change in the sense that if $A$ is an $R$-algebra, then $F_A(M \underset{R}{\otimes} A) = F_R(M)A$.
\end{enumerate}

More generally, suppose that $\mathcal{F}$ is a coherent $\O_S$-module for some scheme $S$. Then the $h^{th}$ Fitting ideal $F^h(\mathcal{F})$ is defined to be the ideal sheaf of $S$ defined locally on affine subvarieties of $S$ as the Fitting ideal of the corresponding finitely presented module. In the language of coherent sheaves, properties $(1)$ through $(3)$ above are translated as follows.
\begin{enumerate}
\item $F^h(\mathcal{F})$ is independent of the local presentation for $\mathcal{F}$.
\item If $\mathcal{G}$ is also a coherent $\O_S$-module, then
$F(\mathcal{F} \oplus \mathcal{G}) = F(\mathcal{F})F(\mathcal{G})$.
\item If $T$ is an $S$-scheme given by $f:T \to S$, then
$F_T(f^{\ast}\mathcal{F}) = f^{-1}F_S(\mathcal{F})\cdot \O_T$.
\end{enumerate}

Given $\mathcal{F}$ as above, the \em Fitting rank \em of $\mathcal{F}$ is defined to be the largest integer $h$ such that $F^h(\mathcal{F}) = 0$. An $S$-point of $\mathcal{W}_d^r(X)$ is then defined to be a degree $d$ line bundle $\lambda \in \Bun_{\mathbb{G}_m,d}(X)(S)$ such that the Fitting rank of $R^1p_{\ast}\lambda$ is at least $g-d+r$. While this may appear quite different from the initial definition of the $\C$-points of $W_d^r(X)$, note that $\lambda \in \mathcal{W}_d^r(X)(S)$ implies that $h^1(X_s,\lambda_s) \geq g-d+r$ for each $s \in S$. By Riemann-Roch, this equivalent to $h^0(X_s,\lambda_s) \geq r+1$ for each $s \in S$.

An $S$-point of $\mathcal{G}_d^r(X)$ is defined to be a pair $(\lambda, V)$ where $\lambda \in \Bun_{\mathbb{G}_m,d}(X)(S)$ and $V$ is a locally free subsheaf of $p_{\ast}\lambda$ of rank $r$ which remains a subsheaf under arbitrary base change.

Rather than considering $\mathcal{G}_d^0(X)$, we will instead consider the moduli stack of pairs $(\lambda,s)$ where $\lambda$ is a line bundle on $X$ of degree $d$, $s \in H^0(X,\lambda)$, and $h^0(X,\lambda) \geq 1$.

Since the moduli of pairs $(\lambda,s)$ with $s \in H^0(X,\lambda)$ is a relative affine cone of $\mathcal{G}_d^0(X)$ over the fibers of $\Bun_{\mathbb{G}_m}(X)$, we will let $C\mathcal{G}(X)$ denote the moduli stack whose $S$-points consist of all pairs $(\lambda,s)$ in which $\lambda \in \mathcal{W}_d^0(X)(S)$ and $s \in H^0(X_S,\lambda)$.  $C\mathcal{G}_d(X)$ will denote the connected component of degree $d$.

We now investigate the extent to which the results of Theorem \ref{grd geometry} apply to the geometry of $\mathcal{W}_d^0(X)$ and $C\mathcal{G}_d(X)$.  The first obvious observation is that $W_d^r(X)$ and $G_d^r(X)$ are atlases for $\mathcal{W}_d^r(X)$ and $\mathcal{G}_d^r(X)$, respectively.  Therefore we only need to consider $C\mathcal{G}_d(X)$.

To begin, let us review the tangent space computations of $G^r_d(X)$ and $W^r_d(X)$ found in \cite[Sec. 4.4]{ACGH}. Let
\begin{equation*}
v:G^r_d(X) \to \Pic_d(X)
\end{equation*}
denote the projection map. Then, given $(\lambda,V) \in G^r_d(X)$,
we have the following exact sequence of tangent spaces
\begin{equation} \label{grd tangent}
0 \to T_{(\lambda,V)}(v^{-1}(\lambda)) \to T_{(\lambda,V)}(G^r_d(X))
\overset{v_{\ast}}{\to} T_{\lambda}(\Pic_d(X)).
\end{equation}

In \eqref{grd tangent}, we are interested in computing
$T_{(\lambda,V)}(G^r_d(X))$, which means that we need to compute
both $T_{(\lambda,V)}(v^{-1}(\lambda))$ and the image of $v_{\ast}$.

$T_{(\lambda,V)}(v^{-1}(\lambda))$ corresponds to first-order deformations of the pair $(\lambda,V)$ in which $\lambda$ is deformed trivially. Therefore, the first-order deformations coincide with the first-order deformations of $V$ as an element of the Grassmannian $Gr_{r+1}(H^0(X,\lambda))$. We conclude that \begin{equation*}
T_{(\lambda,V)}(v^{-1}(\lambda)) \simeq \Hom(V,H^0(X,\lambda)/V).
\end{equation*}

The image of $v_{\ast}$ corresponds to those first-order
deformations of $\lambda$ such that the subspace $V \subset
H^0(X,\lambda)$ may be deformed in a compatible way. Recall first
the well-known identification
\begin{equation*}
T_{\lambda}(\Pic_d(X)) \simeq H^1(X,\O_X).
\end{equation*}

To compute the image of $v_{\ast}$, it will be most convenient to
fix a section $s \in H^0(X,\lambda)$ and a class $\phi \in
H^1(X,\O_X)$, and to find necessary conditions so that there exists
a section $s_{\phi}$ of the deformation $\lambda_{\phi}$ of
$\lambda$ such that the restriction of $s_{\phi}$ to $X$ is $s$. We
may view $\lambda_{\phi}$ as sitting in a short exact sequence
\begin{equation}\label{line deform}
0 \to \lambda \to \lambda_{\phi} \to \lambda \to 0
\end{equation}
of $\O_X$-modules, in which the additional structure of
$\O_{X[\epsilon]}$-module comes from composing the projection
$\lambda_{\phi} \to \lambda$ with the inclusion $\lambda \to
\lambda_{\phi}$. Then \eqref{line deform} yields a long exact
sequence
\begin{equation*}
\ldots \to H^0(X,\lambda_{\phi}) \overset{r_{\phi}}{\to}
H^0(X,\lambda) \overset{\delta_{\phi}}{\to} H^1(X,\lambda) \to
\ldots,
\end{equation*}
in which compatible deformations of $s$ correspond to the inverse
image $r_{\phi}^{-1}(s)$. Therefore, $\phi \in H^1(X,\O_X)$ is in
the image of $v_{\ast}$ if and only if $s$ is in the image of
$r_{\phi}$. Since $\im(r_{\phi}) = \ker(\delta_{\phi})$ and
$\delta_{\phi}$ is given by
\begin{equation*}
\delta_{\phi}(t) = \phi \cup t,
\end{equation*}
the image of $v_{\ast}$ is
\begin{equation}\label{image}
\{ \phi \in H^1(X,\O_X): \phi \cup s = 0 \in H^1(X,\lambda) \text{
for all }s \in V \}.
\end{equation}

We can use the tangent space computation for $G^r_d(X)$ to also
compute the tangent spaces for $W^r_d(X)$. Since the restriction of
the projection
\begin{equation*}
G^r_d(X) \to W^r_d(X)
\end{equation*}
is an isomorphism away from $W^{r+1}_d(X)$, we see that
\begin{equation}\label{wrd tangent}
T_{\lambda}(W^r_d(X)) = \{\phi \in H^1(X,\O_X):\phi \cup s = 0
\text{ for all }s \in H^0(X,\lambda)\}, \text{ if }\lambda \notin
W^{r+1}_d(X).
\end{equation}
Furthermore, general results on determinantal varieties imply that
(see \cite[Sec. 2.2]{ACGH})
\begin{equation}\label{wr1d tangent}
T_{\lambda}(W^r_d(X)) = H^1(X,\O_X), \text{ if }\lambda \in
W^{r+1}_d(X).
\end{equation}

The above argument will need to be slightly altered to work for
$C\mathcal{G}_d(X)$. The first issue is that $G^r_d(X)$ is a scheme,
whereas $C\mathcal{G}_d(X)$ is a stack. We will therefore have to
replace tangent spaces with naive tangent complexes\footnote{We use
the term ``naive tangent complex" because it can be defined for any
sheaf of groupoids, and should not be confused with the tangent
complex of L. Illusie (\cite{Il}).}, and to replace the dimension of
the tangent space by the Euler characteristic of the naive tangent
complex. Then, just as an equidimensional scheme of dimension $d$ is
smooth at a point $x$ if and only if the dimension of the tangent
space at $x$ is $d$, an equidimensional stack of dimension $d$ is
smooth at a point $x$ if and only if the Euler characteristic of the
naive tangent complex at $x$ is $d$.

However, these differences are actually quite mild. If $F$ is a
sheaf of groupoids and $x$ is a $\C$-point of $F$, then the naive
tangent complex $\mathcal{T}_x(F)$ is a $2$-term complex lying in
degrees $-1$ and $0$ such that
\begin{equation*}
\begin{split}
H^0(\mathcal{T}_x(F))& = \{ \text{isomorphism classes of first-order
deformations of }x \},\\
H^{-1}(\mathcal{T}_x(F)) & = \text{Lie}(\Aut(x)).
\end{split}
\end{equation*}

We are now in a good position to verify the following geometric
properties of $C\mathcal{G}_d(X)$.

\begin{prop}\label{cgd}
$C\mathcal{G}_d(X)$ is irreducible of dimension $d$.
\begin{enumerate}
\item If $d<g$, a $\C$-point $(\lambda,s)$ of $C\mathcal{G}_d(X)$ is
smooth if and only if either $s \neq 0$, or $s=0$ and
$h^0(X,\lambda) = 1$.
\item If $g\leq d \leq 2g-2$, a $\C$-point
$(\lambda,s)$ of $C\mathcal{G}_d(X)$ is smooth if and only if either
$s \neq 0$, or $s=0$ and $H^1(X,\lambda) = 0$.
\item Finally, if $d > 2g-2$, then $C\mathcal{G}_d(X)$ is smooth.
\end{enumerate}
\end{prop}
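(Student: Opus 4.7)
The plan is to first establish irreducibility and the dimension count, and then to deduce the smoothness criterion via a direct computation of the naive tangent complex at $(\lambda, s)$.

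For irreducibility and dimension, I would identify the open substack $\{s \neq 0\} \subset C\mathcal{G}_d(X)$ with the symmetric product $X^{(d)}$ via the morphism sending $(\lambda, s)$ to $\mathrm{div}(s)$; the inverse sends an effective degree-$d$ divisor $D$ to the isomorphism class of $(\mathcal{O}_X(D), s_D)$, well-defined on the stack since any rescaling of $s_D$ is absorbed into a scalar automorphism of $\mathcal{O}_X(D)$. Since $X^{(d)}$ is smooth, irreducible, of dimension $d$, so is this open substack. The closed complement is $\mathcal{W}^0_d(X)$, of dimension $d-1$ if $d < g$ and $g-1$ if $d \geq g$ (by Theorem \ref{grd geometry}, and the fact that $W^0_d(X) = \Pic_d(X)$ once $d \geq g$), strictly less than $d$ in every case. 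Moreover any $(\lambda, 0)$ is the $t \to 0$ limit of $(\lambda, t s_0) \in \{s \neq 0\}$, so $\{s \neq 0\}$ is dense, and $C\mathcal{G}_d(X)$ is irreducible of dimension $d$.

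For smoothness, I would use the fact that for an equidimensional stack of dimension $d$, a $\mathbb{C}$-point is smooth exactly when the Euler characteristic of the naive tangent complex there equals $d$. A first-order deformation of $(\lambda, s)$ is a pair $(\lambda_\phi, \tilde{s})$, where $\lambda_\phi$ is a deformation of $\lambda$ that remains in $\mathcal{W}^0_d(X)$ (equivalently, $\phi \in T_\lambda W^0_d(X) \subset H^1(X, \mathcal{O}_X)$), and $\tilde{s}$ is a section of $\lambda_\phi$ lifting $s$; such a lift exists iff $\phi \cup s = 0$ in $H^1(X, \lambda)$, and then forms a torsor over $H^0(X, \lambda)$. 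The scalar automorphism $1 + \epsilon a$ of $\lambda_\phi$ acts on lifts by $\tilde{s} \mapsto \tilde{s} + \epsilon a s$, so $H^{-1}(\mathcal{T}_{(\lambda,s)})$ is $0$ when $s \neq 0$ and $\mathbb{C}$ when $s = 0$. For $s \neq 0$, any nonzero lift $\tilde{s}$ automatically witnesses $\lambda_\phi \in \mathcal{W}^0_d(X)$, so the only constraint on $\phi$ is $\phi \cup s = 0$; the long exact sequence of $0 \to \mathcal{O}_X \xrightarrow{\cdot s} \lambda \to Q \to 0$, with $Q$ a length-$d$ skyscraper on $\mathrm{div}(s)$, yields $\dim \ker(\cdot \cup s) = d - h^0(\lambda) + 1$, and combining with $\dim H^0(X,\lambda)/\mathbb{C} s = h^0(\lambda) - 1$ gives $\dim H^0(\mathcal{T}) = d$, so the Euler characteristic is $d$ and smoothness holds. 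For $s = 0$, the automorphism action on lifts is trivial, so $\dim H^0(\mathcal{T}) = \dim T_\lambda W^0_d(X) + h^0(\lambda)$ and the Euler characteristic is $\dim T_\lambda W^0_d(X) + h^0(\lambda) - 1$; inserting \eqref{wrd tangent} and \eqref{wr1d tangent}, together with the computation $\dim \ker(\cdot \cup s_0) = d$ when $h^0(\lambda) = 1$, and applying Riemann-Roch, the condition ``Euler characteristic $= d$'' unpacks into precisely the three stated cases.

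The main obstacle is the $s = 0$ analysis: the correct constraint on $\phi$ is not the vacuous $\phi \cup 0 = 0$ but membership in $T_\lambda W^0_d(X)$, reflecting the closed condition $h^0 \geq 1$ built into $\mathcal{W}^0_d(X)$ and hence into $C\mathcal{G}_d(X)$ along the zero section. The description of $T_\lambda W^0_d(X)$ jumps between \eqref{wrd tangent} and \eqref{wr1d tangent} depending on whether $\lambda \in W^1_d(X)$, and this jump is exactly what produces the singular stratum of $C\mathcal{G}_d(X)$ along $\{s = 0\}$.
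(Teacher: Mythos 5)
Your proof is correct and follows essentially the same route as the paper: determine the dimension of the naive tangent complex at $(\lambda,s)$ via the constraint $\phi \cup s = 0$ (together with membership in $T_\lambda W^0_d(X)$ when $s = 0$), note that $h^{-1}$ jumps from $0$ to $1$ along the zero section, and compare the Euler characteristic to $d$. The only genuine divergence is in the irreducibility and dimension count: the paper identifies the open locus $\{s\neq 0\}$ as a $\mathbb{G}_m$-torsor over $\mathcal{G}^0_d(X)$ and cites Theorem~\ref{grd geometry}, whereas you identify it directly with $X^{(d)}$ via $\mathrm{div}(s)$; these are canonically the same object, and your presentation has the small advantage of being self-contained and of making the density of $\{s \neq 0\}$ explicit (via the $t \to 0$ limit) rather than asserted. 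Your computation of $\dim\ker(\cdot\cup s) = d - h^0(\lambda) + 1$ from the torsion quotient is the Riemann--Roch form of the paper's $g - h^1(X,\lambda)$, so the two tangent-space counts coincide.
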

\begin{proof}
Let $U$ denote the complement of the zero section inside
$C\mathcal{G}_d(X)$. Then the induced map $U \to \mathcal{G}^0_d(X)$
is smooth of relative dimension $1$. Since $\dim(G^0_d(X)) = d$, the
dimension of $\mathcal{G}^0_d(X)$ is $d-1$. Therefore
\begin{equation*}
\dim(U) = d.
\end{equation*}
Since $U$ is an irreducible (in fact, smooth) and dense open
substack of $C\mathcal{G}_d(X)$, it follows that $C\mathcal{G}_d(X)$
is irreducible of dimension $d$.

To determine the smooth points of $C\mathcal{G}_d(X)$, fix a
$\C$-point $(\lambda,s)$ of $C\mathcal{G}_d(X)$. The computation of
$H^0(\mathcal{T}_{(\lambda,s)}(C\mathcal{G}_d(X)))$ will be quite
similar to the tangent space computations for $G^r_d(X)$. In fact,
we can deduce from analogs of \eqref{grd tangent} and \eqref{image}
that a first-order deformation of $(\lambda,s)$ is given by the
following data.
\begin{enumerate}
\item A section $t \in H^0(X,\lambda)$.
\item A class $\phi \in H^1(X,\O_X)$ such that $\phi \cup s = 0$.
Furthermore, if $s=0$, we must have $\phi \in
T_{\lambda}(W^0_d(X))$.
\end{enumerate}

In order to find the isomorphism classes of first-order
deformations, note that an automorphism of $\lambda_{\phi}$ which
preserves $\lambda$ is given by some $1+a\epsilon \in \C[\epsilon]$.
Then an isomorphism between $(\lambda_{\phi},s+t_1\epsilon)$ and
$(\lambda_{\phi},s+t_2\epsilon)$ is given by an element
$1+a\epsilon$ such that
\begin{equation*}
(1+a\epsilon)(s+t_1\epsilon) = s+t_2\epsilon.
\end{equation*}
Therefore, $(\lambda_{\phi},s+t_1\epsilon) \simeq
(\lambda_{\phi},s+t_2\epsilon)$ if and only if
\begin{equation*}
t_1 + as = t_2
\end{equation*}
for some $a \in \C$. If $s \neq 0$, then a given first-order
deformation of $(\lambda,s)$ therefore lives in a $1$-dimensional
family of isomorphic deformations. On the other hand, if $s=0$, then
any first-order deformation of $(\lambda,s)$ has no non-trivial
automorphisms.

To completely determine
$H^0(\mathcal{T}_{(\lambda,s)}(C\mathcal{G}_d(X)))$, we now classify those $\phi \in H^1(X,\O_X)$ such that $\phi \cup s = 0$ and such
that $\phi \in T_{\lambda}(W^0_d(X))$. If $s \neq 0$, then the
computation is straightforward. In this case, $s$ determines a short
exact sequence
\begin{equation*}
0 \to \O_X \overset{s}{\to} \lambda \to T \to 0
\end{equation*}
in which the quotient $T$ is torsion. Therefore $H^1(X,T) = 0$, and
the induced long exact sequence ends with
\begin{equation*}
\begin{split}
H^1(X,\O_X)& \to H^1(X,\lambda) \to 0.\\
\alpha& \mapsto \alpha \cup s
\end{split}
\end{equation*}
Surjectivity shows that the dimension of $\{ \phi \in
H^1(X,\O_X):\phi \cup s = 0 \}$ is $g - h^1(X,\lambda)$.

If $s=0$, then \eqref{wrd tangent} and \eqref{wr1d tangent}
classify those $\phi \in H^1(X,\O_X)$ corresponding to first-order
deformations of $(\lambda,0)$. Let us now compute
$h^0(\mathcal{T}_{(\lambda,s)}(C\mathcal{G}_d(X)))$.
\begin{enumerate}
\item Suppose that $s \neq 0$. Then
\begin{equation*}
h^0(\mathcal{T}_{(\lambda,s)}(C\mathcal{G}_d(X))) =
h^0(X,\lambda)-1+g-h^1(X,\lambda) = d.
\end{equation*}
\item Suppose that $d<g$ and that $s=0$. Then
\begin{equation*}
\begin{split}
h^0(\mathcal{T}_{(\lambda,0)}(C\mathcal{G}_d(X)))& =
h^0(X,\lambda)+g-h^1(X,\lambda) = d+1, \text{ if }h^0(X,\lambda) =
1,\\
h^0(\mathcal{T}_{(\lambda,0)}(C\mathcal{G}_d(X)))& =
h^0(X,\lambda)+g, \text{ if }h^0(X,\lambda) > 1.
\end{split}
\end{equation*}
\item Suppose that $g\leq d\leq 2g-2$ and that $s = 0$. Then
\begin{equation*}
h^0(\mathcal{T}_{(\lambda,s)}(C\mathcal{G}_d(X))) =
h^0(X,\lambda)+g.
\end{equation*}
We note that $h^0(X,\lambda)+g = d+1$ if and only if $h^1(X,\lambda)
= 0$.
\end{enumerate}
The calculation of
$h^1(\mathcal{T}_{(\lambda,s)}(C\mathcal{G}_d(X)))$ is much simpler.
An automorphism of $(\lambda,s)$ is given by a scalar $a \in
\C^{\times}$ such that $as = s$. Therefore if $s \neq 0$, then
$(\lambda,s)$ has no non-trivial automorphisms, while if $s=0$, then
$(\lambda,0)$ has automorphism group $\C^{\times}$.

It is now clear that, for $d \leq 2g-2$,
\begin{equation*}
\chi(\mathcal{T}_{(\lambda,s)}(C\mathcal{G}_d(X)) = d
\end{equation*}
if and only if
\begin{enumerate}
\item $s \neq 0$, or
\item $s = 0$, $d<g$, and $h^0(X,\lambda) = 1$, or
\item $s = 0$, $g\leq d \leq 2g-2$, and $h^1(X,\lambda) = 0$.
\end{enumerate}
Lastly, to see that $C\mathcal{G}_d(X)$ is smooth whenever $d>2g-2$,
it is easiest to note that $H^1(X,\lambda) = 0$ in this case and
that $C\mathcal{G}_d(X)$ is therefore a vector bundle of rank
$d+1-g$ over $\Bun_{\mathbb{G}_m}(X)$.
\end{proof}

\section{A global Springer resolution}\label{s:resolution}

In this main section of the paper, we specialize to $G=SL_2$. Correspondingly, the associated Laumon/Drinfeld compactification will be denoted by $\overline{\Bun}_B(X)$. Furthermore, we will assume that $\deg(\L)$ is even and that $\deg(\L) \geq 2g$ (with the exception of Section \ref{ss:lower degree}). The same assumptions on the degree of $\L$ may also be found in \cite{Ngo} and \cite{Y}.

We begin by constructing a partial global analog of the Springer
resolution, denoted $\widehat{\N}$. It turns out that the geometry
of $\widehat{\N}$ is closely tied to the geometry of line bundles on
curves. For this reason, as the genus of the curve increases,
$\widehat{\N}$ becomes more complicated to understand. In fact,
$\widehat{\N}$ is only smooth (and hence an actual resolution) when
the genus of the curve is $0$ or $1$.  For this reason, after
showing that $\widehat{\N}$ is proper and birational over $\N$ in
Section \ref{ss:proper birational}, we will begin by describing
$\widehat{\N}$ when $X = \mathbb{P}^1$, where everything is as
simple as possible. This situation is illuminating because many of
the main ideas are presented without complications arising from the
particular geometry of the curve. After this, we will describe
$\widehat{\N}$ when $X$ is an elliptic curve.  This provides a
useful bridge between the genus $0$ case and the higher genus cases.
While we still obtain an honest resolution when $X$ is an elliptic
curve, some extra care must be given in regards to the particular
geometry of the curve. Finally, we will then study $\widehat{\N}$ in
the case of curves of genus greater than $1$. While $\widehat{\N}$
is not smooth in this case, its geometry is
intimately related to that of $C\mathcal{G}(X)$. We will then be
able to resolve $\widehat{\N}$ further, and obtain a stack
$\widetilde{\N}$ which is a resolution of singularities of $\N$.
Finally, we end by discussing twisting bundles $\L$ of lower degree,
as well as the stable locus of $\N$.

\subsection{Construction of a global Springer resolution}\label{ss:construction}

The purpose of this section is to provide definitions and basic
results which will be applicable regardless of the genus of $X$.

\begin{dfn}
Suppose that $(\lambda \subset E) \in \overline{\Bun}_B(X)(\C)$.
Then $E/\lambda \simeq F \oplus T$ where $F$ is a line bundle and $T$ is torsion.  The unique line bundle $\widetilde{\lambda} \supset \lambda$ such that $E/\widetilde{\lambda} \simeq F$ is called the \em normalization \em of $\lambda \subset E$.  The effective divisor on which the torsion sheaf $T \simeq \widetilde{\lambda}/\lambda$ is supported is called the \em defect \em of $\lambda \subset E$.  The defect will be denoted by $\df(\lambda \subset E)$, or by $\df(\lambda)$.
\end{dfn}

In order to define a candidate for the global Springer resolution, it is necessary to extend the definition of defect to arbitrary $S$-points of $\overline{\Bun}_B(X)$. That is, given an $S$-point $\lambda \subset E$ of $\overline{\Bun}_B(X)$, we seek a relative effective Cartier divisor $\df(\lambda)$ on $X_S$ which measures the failure of $\lambda$ to be a subbundle of $E$, and such that $\df(\lambda)_s = \df(\lambda_s)$ for all geometric points $s \in S$. Just as when $S = \Spec(\C)$, the divisor $\df(\lambda)$ will be referred to as the \em defect \em of $\lambda \subset E$.

The solution to this problem is provided by Fitting ideals, which were reviewed in Section \ref{ss:divisor geometry}. Before stating the result, there is one further basic property of Fitting ideals not previously mentioned. As before, both module and sheaf theoretic versions are given.
\begin{enumerate}
\item If $R$ is a discrete valuation ring with maximal ideal
$\mathfrak{m}$ and $M$ is a finitely generated $R$-module, then
$F^0(M) = \mathfrak{m}^{\ell(M)}$, where $\ell(M)$ denotes the
length of $M$ as an $R$-module.
\item If $X$ is a smooth curve and there is an exact sequence
\begin{equation*}
0 \to \lambda \to E \to Q \to 0
\end{equation*}
with $\lambda$ a line bundle and $E$ a vector bundle of rank $r$,
then $F^{r-1}(Q)$ is the ideal sheaf of the divisor on which the
torsion part of $Q$ is supported. In other words, $F^{r-1}(Q)$ is
the ideal sheaf of the defect of $\lambda \subset E$.
\end{enumerate}

The following proposition shows that if $(\lambda \subset E) \in \overline{\Bun}_B(X)(S)$, then the first Fitting ideal of the quotient is the sought after generalization of the defect to $S$-points of $\overline{\Bun}_B(X)$.

\begin{prop}\label{defect S}
Given a scheme $S$, suppose that
\begin{equation*}
0 \to \lambda \to E \to Q \to 0
\end{equation*}
is an exact sequence of coherent sheaves on $X_S$ where $\lambda$ is a line bundle and $E$ is a vector bundle of rank $r$, and such that $Q$ is $S$-flat. Then there exists a relative effect Cartier divisor $\df(\lambda)$ on $X_S$ over $S$ which measures the failure of $\lambda$ to be a subbundle of $E$, and such that the fiber $\df(\lambda)_s$ coincides with the defect of the fiber $\df(\lambda_s)$ for every geometric point $s \in S$.
\end{prop}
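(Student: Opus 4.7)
The plan is to take $\df(\lambda)$ to be the closed subscheme of $X_S$ cut out by the Fitting ideal sheaf $F^{r-1}(Q)$, and then to verify two properties: compatibility with passage to geometric fibers, and that the resulting subscheme is a relative effective Cartier divisor over $S$. Fiberwise compatibility is the easy half. Since $Q$ is $S$-flat, the sequence $0 \to \lambda \to E \to Q \to 0$ restricts on each geometric fiber $X_s$ to an exact sequence $0 \to \lambda_s \to E_s \to Q_s \to 0$, and property (3) of Fitting ideals (base change) gives $F^{r-1}(Q)|_{X_s} = F^{r-1}(Q_s)$. Applying property (2) on the smooth curve $X_s$, this Fitting ideal is precisely the ideal sheaf of $\df(\lambda_s)$, yielding $\df(\lambda)_s = \df(\lambda_s)$ as desired.

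The substantive content is to show $\df(\lambda)$ is a relative effective Cartier divisor. This is a local question on $X_S$, so I would work near a point $x \in X_S$ lying over $s \in S$. After trivializing $\lambda$ and $E$ in a neighborhood of $x$, the embedding $\lambda \hookrightarrow E$ is recorded by an $r$-tuple $(a_1, \ldots, a_r) \in R^r$, where $R = \O_{X_S, x}$, and property (1) identifies $F^{r-1}(Q)_x$ with the ideal $(a_1, \ldots, a_r) \subset R$. Restricting to the fiber, $(\bar a_1, \ldots, \bar a_r) \subset \O_{X_s, x}$ is the principal ideal $(t^{d_s})$, where $t$ is a uniformizer at $x$ and $d_s$ is the multiplicity of $\df(\lambda_s)$ at $x$; after renumbering, one may arrange that $\bar a_1$ has $t$-adic valuation exactly $d_s$.

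The key step is then to exploit the $S$-flatness of $Q$ to promote this fiberwise principal structure to a principal generation of $(a_1, \ldots, a_r)$ in $R$ itself. Via Nakayama's lemma together with the vanishing of $\mathrm{Tor}_1^{\O_{S,s}}(Q, \C)$ forced by flatness, one shows that each $a_i$ with $i > 1$ lies in the ideal generated by $a_1$, so that $(a_1, \ldots, a_r) = (a_1)$ in $R$. The generator $a_1$ is a non-zerodivisor in $R$: since $R$ is flat over $\O_{S, s}$ and the image of $a_1$ in the special fiber is the non-zerodivisor $t^{d_s}$, the element $a_1$ cannot lie in any associated prime of $R$. The local criterion of flatness then gives that $R/(a_1)$ is $\O_{S, s}$-flat, so $\df(\lambda)$ is flat over $S$, and these local Cartier divisors glue to the global Cartier divisor defined by the coherent ideal sheaf $F^{r-1}(Q)$.

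The main obstacle is the principal-generation step — extracting from the abstract hypothesis of $S$-flatness on $Q$ the concrete local statement that the Fitting ideal $(a_1, \ldots, a_r)$ is a principal ideal in the local ring $R$. Once this structural result is in hand, the remaining verifications reduce to routine applications of the Fitting ideal formalism recalled at the start of this subsection.
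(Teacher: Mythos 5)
Your overall plan mirrors the paper's: define $\df(\lambda)$ as the vanishing of the Fitting ideal $F^{r-1}(Q)$, verify fiberwise compatibility via base change for Fitting ideals, and then argue that the result is a relative effective Cartier divisor. The paper handles the last step differently from you: it invokes \cite[Lem.\ 1.2.3]{KM} to reduce to showing that $\O_{\df(\lambda)}$ is finite and flat over $S$ and then appeals to the local criterion of flatness, whereas you attempt to show directly that the stalk of $F^{r-1}(Q)$ is a principal ideal generated by a non-zerodivisor.

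The principal-generation step is where your argument breaks down. You claim that, because $\bar a_1,\dots,\bar a_r$ generate the principal ideal $(\bar a_1)$ in the DVR $\O_{X_s,x}$, Nakayama together with the vanishing of $\Tor_1^{\O_{S,s}}(Q_x,\kappa(s))$ forces $(a_1,\dots,a_r)=(a_1)$ in $R=\O_{X_S,x}$. But the Tor vanishing supplied by flatness is for $Q_x$, not for $R/J$ where $J=(a_1,\dots,a_r)$. To make Nakayama bite one needs $J\cap\mathfrak{m}_sR=\mathfrak{m}_sJ$, which is precisely $\Tor_1^{\O_{S,s}}(R/J,\kappa(s))=0$, i.e.\ the flatness being sought, so the argument is circular. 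Moreover the implication is simply false: take $X=\mathbb{P}^1$, $S=\mathbb{A}^1=\Spec\C[u]$, $\lambda=\O(-2)\boxtimes\O_S$, $E=\O_{X_S}^{\oplus 2}$, and let the inclusion be given by the section $(zw,\,z^2+uw^2)$ of $\O(2)^{\oplus 2}$. One checks that $Q=E/\lambda$ is $S$-flat (it is torsion-free over $\O_{X_S}$ on the chart $w\neq 0$ and locally free on $z\neq 0$), yet at the origin of the chart $w\neq 0$ the Fitting ideal is $(t,\,t^2+u)=(t,u)$, which is not principal. Correspondingly $\df(\lambda_0)$ has degree $1$ while $\df(\lambda_s)=\varnothing$ for $s\neq 0$, so no relative effective Cartier divisor over $S$ can have these as its fibers. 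The same example exposes a parallel gap in the paper's own proof (the final inference from ``the fiber ideal is $(t^k)$'' to the required Tor vanishing does not follow), so the proposition as stated appears to require an additional hypothesis, such as restricting to the locally closed stratum of $\overline{\Bun}_B(X)$ on which the defect degree is constant.
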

\begin{proof}
We claim that we may define $\df(\lambda)$ to be the closed subscheme of $X_S$ defined by the ideal sheaf $F^{r-1}(Q)$. To show that $\df(\lambda)$ is a relative Cartier divisor on $X_S$, it suffices to prove, using \cite[Lem. 1.2.3]{KM}, that $\df(\lambda)$ is finite and flat over $S$. The closed subscheme $\df(\lambda)$ is proper over $S$ because $X_S$ is proper over $S$. Furthermore, $\df(\lambda)$ is quasi-finite over $S$ because $\df(\lambda)_s = \df(\lambda_s)$ for every $s \in S$. It follows that $\df(\lambda)$ is finite over $S$.

To show that $\df(\lambda)$ is $S$-flat, we use the local criterion for flatness (\cite[Thm. 6.8]{Eis}) and the locally free resolution of $Q$ given by
\begin{equation*}
0 \to \lambda \to E \to Q \to 0.
\end{equation*}
Since flatness is an open property, it suffices to check the local criterion of flatness only at closed points of $S$. Let $s \in S$ be a closed point, and pick $x \in X_S$ such that $p(x) = s$. Then we need to check that
\begin{equation}\label{tor}
\Tor_1^{i_s^{\ast}\O_S}(\O_{S,s},i_x^{\ast}\O_{\df(\lambda)}) = 0.
\end{equation}
Then consider the corresponding map on the flat resolution of $Q$,
\begin{equation*}
f_s:i_x^{\ast}\lambda \otimes \O_{S,s} \to i_x^{\ast}E \otimes \O_{S,s}.
\end{equation*}
The map $f_s$ is injective due to the assumption that $Q$ is $S$-flat. Furthermore, the ideal sheaf of $\df(\lambda)$ at $s$ is defined by the ideal generated by the entries of $f_s$. Since $X$ is a smooth curve, the entries of $f_s$ are elements of a discrete valuation ring. Therefore, since $f_s$ is injective, the ideal generated by the entries of $f_s$ is a principal ideal generated by some $t^k$ where $t$ is a uniformizing parameter and $k \geq 0$. Therefore the desired equality in \eqref{tor} holds.

We have thus shown that $\df(\lambda)$ is a relative effective Cartier divisor on $X_S$ over $S$ which coincides with $\df(\lambda_s)$ over the geometric points of $S$. Finally, the fiber-by-fiber criterion for flatness over $X_S$ (\cite[Cor V.3.6]{AK}) implies that $\df(\lambda)$ is the locus where $Q$ is not locally free (equivalently, where $\lambda$ fails to be a subbundle of $E$).
\end{proof}

We are now ready to define the partial global Springer resolution.

\begin{dfn}\label{global springer}
The \em partial global Springer resolution \em $\widehat{\N}_{X,G,\L}$ (or simply $\widehat{\N}$) is the moduli stack whose $S$-points consist of
\begin{equation*}
((E,\varphi),\lambda \subset E) \in \N(S)
\underset{\Bun_G(X)(S)}{\times} \overline{\Bun}_{B,d}(X)(S)
\end{equation*}
such that the following conditions hold.
\begin{enumerate}
\item $\lambda \subset \ker(\varphi)$.
\item $\im(\varphi) \subset (\lambda \otimes \L)(-\df(\lambda))$.
\item $\lambda^{\otimes 2} \otimes \pi^{\ast}\L \in \mathcal{W}^0(X)(S)$.
\end{enumerate}
\end{dfn}

The definition of $\widehat{\N}$ deserves some explanation. Condition (1) is motivated by the ordinary Springer resolution, which consists of flags preserved by a nilpotent endomorphism. It also causes $\widehat{\N}$ to be a closed substack of $T^{\ast}\overline{\Bun_B}(X)$ when $\L = \omega_X$. Condition (2) can be motivated in a couple of ways. Since $\varphi$ is nilpotent, $\im(\varphi) \subset \ker(\varphi) \otimes \L$, and condition (2) can be viewed as a strengthening of this. Indeed, in the ideal situation where $\varphi$ is generically regular and $\lambda = \ker(\varphi)$, the defect is zero and nothing new occurs. In this case, we are then largely able to understand $(E,\varphi)$ through the data of $\lambda$ and the induced map $\overline{\varphi}:\im(\varphi) \to \lambda \otimes \L$. Since $\lambda$ is a subbundle, $\im(\varphi) = \lambda^{-1}$, and $\overline{\varphi}$ can be viewed as a global section of $\lambda^{\otimes 2} \otimes \L$.

In a setting where $\lambda \subset E$ is not a subbundle, we would still like to have the data of $\overline{\varphi} \in H^0(X,\lambda^{\otimes 2} \otimes \L)$, and this is precisely what condition (2) provides. This is desirable for a couple of reasons. The first is that without this condition, $\widehat{\N}$ would not be birationally equivalent to $\N$, as most of the fibers would be much too large. Secondly, condition (2) makes it possible for $\widehat{\N}$ to be a vector bundle over $\overline{\Bun_B}(X)$, much like $\widetilde{\N \mathrm{ilp}}$ is a vector bundle over $G/B$. For certain degrees of $\lambda$, this will not be true due to the non-constancy of $h^0(X,\lambda^{\otimes 2} \otimes \L)$, but even in this situation condition (2) still allows us to understand $\widehat{\N}$ in terms of $C\mathcal{G}(X)$.

There is also a motivation for condition (2) coming from the notion of the irregularity of the nilpotent Higgs field $\varphi$. Intuitively speaking, the irregularity of $\varphi$, denoted $\irr(\varphi)$, is the divisor which measures the failure of a generically regular $\varphi$ to be regular. Since $G = SL_2$, regularity of $\varphi$ is equivalent to $\varphi$ having rank $1$ everywhere, which means that $\irr(\varphi)$ is roughly the divisor of zeroes of $\varphi$. More precisely, if $\varphi$ is generically regular, $\irr(\varphi) = \df(\im(\varphi) \subset E \otimes \L)$. Then condition (2) can be viewed as saying that $2\df(\lambda) \subset \irr(\varphi)$ for generically regular $\varphi$. This simply says that there should be a relationship between the failure of $\lambda$ to be a subbundle of $E$ and the failure of $\varphi$ to be regular.

\begin{exmp}
Let $X = \mathbb{P}^1$ and $\L = \O(2)$.  Consider the nilpotent endomorphism $\varphi:\O \oplus \O \to (\O \oplus \O) \otimes \O(2)$ given by $\varphi = \begin{pmatrix}0 & z^2 \\ 0 & 0 \end{pmatrix}$. It turns out that $(\O \oplus \O, \varphi)$ lies in the intersection of two irreducible components of $\N(\C)$. Without condition (2) of Definition \ref{global springer}, the fiber of $(\O \oplus \O, \varphi)$ in $\widehat{\N}_1$ would correspond to all inclusions $\O(-1) \subset \O \oplus \O$ of the form $\begin{pmatrix}s \\ 0 \end{pmatrix}$ with $s \in H^0(\O(1))$, and hence would be isomorphic to $\mathbb{P}^1$.  Furthermore, this example generalizes to any $(\O \oplus \O, \varphi)$ such that $\irr(\varphi)$ is a skyscraper sheaf supported on $2x$ for some $x \in X$.  However, condition (2) of Definition \ref{global springer} implies that the section $s \in H^0(\O(1))$ must be of the form $\begin{pmatrix}z \\ 0 \end{pmatrix}$, and hence the fiber consists of a single point in this connected component.
\end{exmp}

Finally, condition (3) of Definition \ref{global springer} is there for a couple of reasons. First, it dictates that $\deg(\lambda) \geq -\frac{1}{2}\deg(\L)$. Therefore, the connected components of $\widehat{N}$ are indexed by:
\begin{enumerate}
\item All integers $d > -\frac{1}{2}\deg(\L)$.
\item All $2^{2g}$ square roots of $\L^{-1}$.
\end{enumerate}

For each such integer $d$, the corresponding connected component will be denoted $\widehat{N}_d$. For convenience, this notation will also be used when $d = -\frac{1}{2}\deg(\L)$ to denote the union of connected components for which $\deg(\lambda) = -\frac{1}{2}\deg(\L)$. In other words,
\begin{equation*}
\widehat{\N}_{-\frac{1}{2}\deg(\L)} = \underset{L}{\bigcup} \widehat{N}_L,
\end{equation*}
where $L^{\otimes 2} = \L^{-1}$.

Lastly, requiring that $\lambda^{\otimes 2} \otimes \L \in \mathcal{W}^0(X)$ prevents superfluous Higgs bundles from appearing in the fibers of the projection map $\widehat{\N} \to \overline{\Bun_B}(X)$ in the sense that any fiber will necessarily contain nilpotent Higgs bundles with a nonzero Higgs field.

\subsection{Properness and birationality of $\widehat{\N} \to
\N$}\label{ss:proper birational}

The purpose of this section is to show that the projection
\begin{equation*}
\mu:\widehat{\N} \to \N
\end{equation*}
is both proper and a birational equivalence.  Issues of smoothness and irreducibility of $\widehat{\N}$ will be addressed in subsequent sections.

\begin{prop}\label{proper}
$\mu$ is proper.
\end{prop}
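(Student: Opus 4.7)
The plan is to factor $\mu$ as a closed immersion followed by a manifestly proper map, exploiting the fact that $\overline{\Bun}_B(X)$ is a relative compactification of $\Bun_B(X)$ over $\Bun_G(X)$. Concretely, by Proposition \ref{compactification}, the projection $q^D \colon \overline{\Bun}_B(X) \to \Bun_G(X)$ is proper, so base change along the natural forgetful map $\N \to \Bun_G(X)$ produces a proper morphism
\begin{equation*}
\N \underset{\Bun_G(X)}{\times} \overline{\Bun}_B(X) \longrightarrow \N.
\end{equation*}
It therefore suffices to show that $\widehat{\N}$ is a closed substack of this fiber product, from which $\mu$ inherits properness as the composition of a closed immersion with a proper map.

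To verify closedness, I would check each of the three conditions of Definition \ref{global springer} separately. Condition (1), namely $\lambda \subset \ker(\varphi)$, is the vanishing locus of the composition $\lambda \hookrightarrow E \xrightarrow{\varphi} E \otimes \pi^{\ast}\L$, hence cut out by the zero section of $\Hom(\lambda, E \otimes \pi^{\ast}\L)$ on the fiber product. Condition (3), that $\lambda^{\otimes 2} \otimes \pi^{\ast}\L$ lie in $\mathcal{W}^0(X)$, is closed because $\mathcal{W}^0(X)$ is itself defined inside $\Bun_{\mathbb{G}_m}(X)$ by a Fitting ideal (the Fitting rank condition reviewed in Section \ref{ss:divisor geometry}), and Fitting ideals pull back nicely. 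Condition (2) requires a little more care: once condition (1) holds, $\varphi$ factors as $E \to E/\lambda \to E \otimes \pi^{\ast}\L$, and using Proposition \ref{defect S} to produce $\df(\lambda)$ as a relative Cartier divisor, the sub-sheaf $(\lambda \otimes \pi^{\ast}\L)(-\df(\lambda)) \subset E \otimes \pi^{\ast}\L$ is well-defined; the requirement $\im(\varphi) \subset (\lambda \otimes \pi^{\ast}\L)(-\df(\lambda))$ is then the vanishing of the induced map into the quotient, again a closed condition.

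The only genuine subtlety I anticipate is bookkeeping across the infinitely many connected components indexed by $d \geq -\tfrac{1}{2}\deg(\L)$. This can be handled by noting that properness for $\mu$ can be checked after base change to any finite type $S \to \N$; for such an $S$, the Harder--Narasimhan filtrations of the underlying bundles bound the possible degrees of line subsheaves that can appear, so only finitely many components $\widehat{\N}_d$ can contribute. On each such component, the argument above applies verbatim, so $\mu$ is proper as claimed. The main obstacle here is not conceptual but notational---making sure the defect divisor produced by Proposition \ref{defect S} interacts correctly with the twisting in condition (2), which is where the careful Fitting-ideal formalism pays off.
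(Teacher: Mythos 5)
Your proof follows exactly the same route as the paper's: factor $\mu$ as a closed immersion into the fiber product $\N \underset{\Bun_G(X)}{\times} \overline{\Bun}_B(X)$ followed by the base change of the proper map $\overline{\Bun}_B(X) \to \Bun_G(X)$. The paper leaves the closedness check as a one-line remark, whereas you spell out why each of the three defining conditions is closed and flag the component-finiteness bookkeeping; these are useful elaborations but not a different argument.
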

\begin{proof}
Consider the following diagram.
\begin{diagram}
\overline{\Bun}_B(X) & \lTo & \overline{\Bun}_B(X) \underset{\Bun_G(X)}{\times} \N \\
\dTo && \dTo \\
\Bun_G(X) & \lTo & \N \\
\end{diagram}

Since $\overline{\Bun}_B(X) \to \Bun_G(X)$ is proper and $\N$ is a
closed substack of $\mathcal{M}$, the pull-back map
\begin{equation*}
\overline{\Bun}_B(X) \underset{\Bun_G(X)}{\times} \N \to \N
\end{equation*}
is also proper.  Therefore in order to show that $\widehat{\N} \to
\N$ is proper, it suffices to note that $\widehat{\N}$ is a closed
substack of $\overline{\Bun}_B(X) \underset{\Bun_G(X)}{\times} \N$.
\end{proof}

To show that $\mu$ is a birational equivalence between
$\widehat{\N}$ and $\N$, we will define an open substack of
$\N$ over which $\mu$ is an isomorphism, which will be referred to as the locus of \em globally regular \em nilpotent Higgs bundles. To define this substack, first recall the algebraic stack $Coh_{X,0}$ as defined in \cite{Lau}. $Coh_{X,0}$ is the moduli stack of finite length coherent sheaves on $X$. Its connected components are given by $Coh_{X,0}^m$, which denotes the moduli stack of coherent sheaves on $X$ of length $m$.

For a fixed $m$, each $Coh_{X,0}^m$ is stratified by locally closed substacks $Coh_{X,0}^{(m_1,\ldots,m_k)}$, where $(m_1,\ldots,m_k)$ is a partition of $m$. The unique open stratum corresponds to the the partition $(m)$. In this case, $Coh_{X,0}^{(m)}$ parameterizes those length $m$ coherent sheaves on $X$ which are supported on a divisor of the form $\underset{i=1}{\overset{m}{\sum}}x_i$, in which the $x_i$ are distinct points of $X$.

Let $\N^{\mathrm{gen,reg}}$ denote the open locus of generically regular elements of $\N$. Then there is a map
\begin{equation*}
\alpha:\N^{\mathrm{gen,reg}} \to Coh_{X,0}
\end{equation*}
which is defined as follows. Recalling that for a generically regular $(E,\varphi)$, the irregularity $\irr(\varphi)$ is the divisor which is defined to be the defect of $\im(\varphi) \subset E \otimes \L$, the map $\alpha$ is defined by
\begin{equation*}
\alpha((E,\varphi)) = \O_{\irr(\varphi)}.
\end{equation*}
Finally, the globally regular substack $\N^{\mathrm{gl,reg}}$ of $\N$ is defined to be
\begin{equation*}
\alpha^{-1}\left(\underset{m \geq 0}{\bigcup} Coh_{X,0}^{(m)}\right).
\end{equation*}

$\N^{\mathrm{gl,reg}}$ is therefore an open substack of $\N^{\mathrm{gen,reg}}$ because each $Coh_{X,0}^{(m)}$ is open in $Coh_{X,0}^m$. Since $\N^{\mathrm{gen,reg}}$ is an open substack of $\N$, it follows that $\N^{\mathrm{gl,reg}}$ is as well.

\begin{prop}\label{birational}
$\mu:\widehat{\N} \to \N$ is a birational equivalence.  More specifically, the restriction of $\mu$ to $\N^{\mathrm{gl,reg}}$ is an isomorphism.  The inverse map is given by
\begin{equation*}
\begin{split}
\nu:\N^{\mathrm{gl,reg}}& \to \widehat{\N}\\
(E,\varphi)& \mapsto (\ker(\varphi)\subset E, \varphi).
\end{split}
\end{equation*}
\end{prop}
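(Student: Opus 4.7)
The plan is to verify three things: (a) $\nu$ is a well-defined morphism $\N^{\mathrm{gl,reg}} \to \widehat{\N}$; (b) $\mu$ and $\nu$ are mutual inverses over $\N^{\mathrm{gl,reg}}$; and (c) $\N^{\mathrm{gl,reg}}$ is open and dense in $\N$, so that $\mu$ is a birational equivalence.

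For (a), given $(E,\varphi) \in \N^{\mathrm{gl,reg}}(S)$, I set $\lambda := \ker(\varphi)$ and must check that it defines an $S$-point of $\overline{\Bun}_B(X)$ satisfying the three conditions of Definition \ref{global springer}. Since $\varphi^2 = 0$, the image $\im(\varphi)$ lies in $\ker(\varphi) \otimes \pi^*\L$, so $E/\ker(\varphi)$ injects into a line bundle and is fiberwise torsion-free; hence $\ker(\varphi)$ is a line subbundle on each geometric fiber. The globally regular hypothesis (reduced irregularity fiberwise) ensures that on $X_S$ one can write $\varphi = t \cdot \varphi'$ locally along the family of irregularity divisors, where $t$ is the local equation and $\varphi'$ has rank $1$; then $\ker(\varphi) = \ker(\varphi')$ is a line subbundle of $E$ on $X_S$ with locally free quotient. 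The three conditions are then immediate: (1) is tautological; (2) reduces, since $\df(\lambda) = 0$, to $\im(\varphi) \subseteq \lambda \otimes \pi^*\L$, which is just nilpotency; and (3) follows because the induced map $E/\lambda \to \lambda \otimes \pi^*\L$ yields, via $\det(E) = \O_{X_S}$, a nonzero section of $\lambda^{\otimes 2} \otimes \pi^*\L$.

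For (b), $\mu \circ \nu = \mathrm{id}$ is obvious by construction. Conversely, suppose $(\lambda \subset E, \varphi)$ lies in $\mu^{-1}(\N^{\mathrm{gl,reg}})$. Condition (1) yields $\lambda \subseteq \ker(\varphi)$, so the saturation $\widetilde{\lambda} \subseteq \ker(\varphi)$; since both are line subbundles of $E$ whose quotients in $E$ are torsion-free of rank $1$, they coincide: $\widetilde{\lambda} = \ker(\varphi)$. Writing $\lambda = \widetilde{\lambda}(-\df(\lambda))$, condition (2) becomes $\im(\varphi) \subseteq \ker(\varphi) \otimes \pi^*\L(-2\df(\lambda))$. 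A direct computation using that $\ker(\varphi)$ is a subbundle gives $\im(\varphi) = \ker(\varphi) \otimes \pi^*\L(-\irr(\varphi))$, so the containment is equivalent to the divisor inequality $\irr(\varphi) \geq 2\df(\lambda)$. In the globally regular case $\irr(\varphi)$ is reduced, forcing $\df(\lambda) = 0$ and thus $\lambda = \widetilde{\lambda} = \ker(\varphi)$.

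For (c), openness of $\N^{\mathrm{gl,reg}}$ in $\N$ follows from the definition (generic regularity and reducedness of the irregularity divisor are both open), and density may be established via a direct deformation argument (a generic nilpotent Higgs field within any component of $\N$ can be perturbed to have reduced irregularity) or cross-referenced against the dimension counts in subsequent sections. Openness and density of $\mu^{-1}(\N^{\mathrm{gl,reg}})$ in $\widehat{\N}$ then follow from the isomorphism in (b) combined with properness of $\mu$. The main obstacle I anticipate is the family-theoretic verification in step (a): showing $\ker(\varphi)$ is a subbundle of $E$ over all of $X_S$ rather than merely fiberwise. This requires careful use of the reducedness of the irregularity divisor to guarantee the local factorization $\varphi = t\cdot \varphi'$ with $\varphi'$ of rank $1$, and must be checked to behave well under arbitrary base change.
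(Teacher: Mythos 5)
Your proposal is correct and follows the same route as the paper's one-paragraph proof: condition (2) of Definition \ref{global springer} forces $2\df(\lambda) \subseteq \irr(\varphi)$, and reducedness of $\irr(\varphi)$ on the globally regular locus then gives $\df(\lambda) = 0$, whence $\lambda = \ker(\varphi)$. You are considerably more thorough than the paper, which leaves implicit the well-definedness of $\nu$ in families, the saturation argument, and the density of $\N^{\mathrm{gl,reg}}$ needed for the birationality claim.
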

\begin{proof}
It is clear that $\mu \circ \nu = \mathrm{id}_{\N^{\mathrm{gl,reg}}}$. To show that $\nu \circ \mu = \mathrm{id}_{\widehat{\N}}$, it suffices to show that the fiber over a point $(E,\varphi) \in \N^{\mathrm{gl,reg}}$ consists of a single point. Suppose then that $\lambda \subset E$ is in the fiber over $(E,\varphi)$. Since $2\df(\lambda) \subset \irr(E,\varphi)$, we must have $\df(\lambda) = 0$, which implies that $\lambda = \ker(\varphi)$.
\end{proof}

\subsection{A global Springer resolution in genus 0}\label{ss:genus
0}

In this section, $X = \mathbb{P}^1$ and $\L$ will be any line bundle on $X$ of even, nonnegative degree.

Let us briefly explain what aspect of $\mathbb{P}^1$ makes its global nilpotent cone so much easier to study than that of higher genus curves.  Recall that on an arbitrary curve, the Euler characteristic of any line bundle is a linear function of the degree of the line bundle.  On the other hand, for any line bundle on $\mathbb{P}^1$ of nonnegative degree, $h^0$ is a linear function of the degree of the line bundle (i.e., $H^1$ vanishes). This is important because for any $(\lambda \subset E) \in \overline{\Bun}_B(X)$ in the image of the projection from $\widehat{\N}$, the geometry of $\widehat{\N}$ will largely be controlled by $H^0(X,\lambda^{\otimes 2} \otimes \L)$.  By condition (3) of Definition \ref{global springer}, the degree of $\lambda^{\otimes 2} \otimes \L$ is nonnegative, and so the dimension of $H^0(X,\lambda^{\otimes 2} \otimes \L)$ depends only on the degree of $\lambda$ when $X = \mathbb{P}^1$.

\begin{thm}\label{resolution 0}
When $X = \mathbb{P}^1$, the projection map $\mu:\widehat{\N} \to \N$ is a resolution of singularities.
\end{thm}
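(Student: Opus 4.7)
The plan combines Propositions \ref{proper} and \ref{birational}---which establish that $\mu$ is proper and restricts to an isomorphism over the open dense substack $\N^{\mathrm{gl,reg}}$---with a direct proof that $\widehat{\N}$ is smooth when $X=\mathbb{P}^1$. Thus the task reduces to verifying smoothness of $\widehat{\N}$.

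My strategy is to realize each connected component $\widehat{\N}_d$ as the total space of a vector bundle over $\overline{\Bun}_{B,d}(X)$ via the forgetful projection $\pi:\widehat{\N}\to\overline{\Bun}_B(X)$. Conditions (1) and (2) of Definition \ref{global springer} amount to saying that any admissible $\varphi$ factors as
\begin{equation*}
E \twoheadrightarrow E/\lambda \xrightarrow{\psi} \lambda(-\df(\lambda)) \otimes \pi^{\ast}\L \hookrightarrow E \otimes \pi^{\ast}\L,
\end{equation*}
so the fiber of $\pi$ over an $S$-point $(\lambda\subset E)$ is $\Hom_{X_S}(E/\lambda, K)$, where $K := \lambda(-\df(\lambda))\otimes \pi^{\ast}\L$. (Nilpotency of $\varphi$ comes for free, since $\varphi|_{\lambda}=0$ together with $\im\varphi\subset\lambda\otimes\pi^{\ast}\L$ forces $\varphi^2=0$.) Applying $\underline{\Hom}(-,K)$ to the exact sequence $0\to\lambda\to E\to E/\lambda\to 0$ and using that $\underline{\Ext}^1(E,K)=0$ (since $E$ is locally free) yields $\underline{\Hom}(E/\lambda, K) = \ker(E^{\vee}\otimes K \to \lambda^{-1}\otimes K)$. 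Using $E^{\vee}\simeq E$ (from $\det E\simeq \O_X$ for $SL_2$) and the relative normalization $\widetilde{\lambda} := \lambda(\df(\lambda))$, a local calculation identifies this kernel canonically with $\widetilde{\lambda}\otimes K \simeq \lambda^{\otimes 2}\otimes \pi^{\ast}\L$. Passing to global sections, the fiber of $\pi$ becomes $H^0(X_S, \lambda^{\otimes 2}\otimes \pi^{\ast}\L)$.

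On $X=\mathbb{P}^1$, condition (3) is equivalent to $\deg(\lambda^{\otimes 2}\otimes \L)\geq 0$, which forces $H^1(X_s, \lambda_s^{\otimes 2}\otimes \L)=0$ for every geometric point $s\in S$. By cohomology and base change, the pushforward of the relevant universal line bundle from $X_S$ to $S$ is a vector bundle of rank $2d+\deg(\L)+1$, and $\widehat{\N}_d$ is identified with its total space over $\overline{\Bun}_{B,d}(X)$. Since $\overline{\Bun}_B(X)$ is smooth for $G=SL_2$---which on $\mathbb{P}^1$ can be checked directly by parameterizing $(\lambda\subset E)$ via the triple consisting of the normalization $\widetilde{\lambda}$, the effective divisor $\df(\lambda)$, and the extension class of $E$ as an extension of $\widetilde{\lambda}^{-1}$ by $\widetilde{\lambda}$, each of which lives in a smooth stack---$\widehat{\N}_d$ is smooth as a vector bundle over a smooth base.

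The main technical obstacle I anticipate is making the sheaf-level identification $\underline{\Hom}(E/\lambda, K)\simeq \lambda^{\otimes 2}\otimes \pi^{\ast}\L$ fully rigorous in families, particularly over non-reduced bases $S$, where the torsion subsheaf of $E/\lambda$ cannot simply be split off globally. This is resolved by working canonically through the four-term exact sequence $0\to \widetilde{\lambda}\to E\to \lambda^{-1}\to \O_{\df(\lambda)}\to 0$ obtained by dualizing $\lambda\hookrightarrow E$ via the symplectic form, since the identification of the kernel with $\widetilde{\lambda}$ is valid over any base and commutes with base change.
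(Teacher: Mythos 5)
Your proposal mirrors the paper's argument: reduce to smoothness of $\widehat{\N}$ via Propositions \ref{proper} and \ref{birational}, then exhibit $\widehat{\N}_d \to \overline{\Bun}_{B,d}(X)$ as a vector bundle of rank $2d+\deg(\L)+1$ whose fiber is $H^0(X_S,\lambda^{\otimes 2}\otimes\pi^{\ast}\L)$, using the genus-zero vanishing of $H^1$. The only differences are cosmetic — you justify the fiber identification $\underline{\Hom}(E/\lambda,K)\simeq\lambda^{\otimes 2}\otimes\pi^{\ast}\L$ and smoothness of $\overline{\Bun}_B$ explicitly where the paper takes these as given, and you appeal to cohomology and base change where the paper instead invokes constancy of $h^0$ over a reduced atlas.
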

\begin{proof}
Given Proposition \ref{proper} and Proposition \ref{birational}, all that is left to prove is that $\widehat{\N}$ is smooth.  This follows from Proposition \ref{smooth 0} below together with the smoothness of $\overline{\Bun}_B(X)$.
\end{proof}

\begin{prop}\label{smooth 0}
The projection map $\widehat{\N}_d \to \overline{\Bun}_{B,d}(X)$ is
a vector bundle of rank $2d + \deg(\L) + 1 $.
\end{prop}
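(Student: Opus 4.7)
The plan is to realize $\widehat{\N}_d$ as the total space of the vector bundle $p_*(\lambda^{\otimes 2} \otimes \pi^*\L)$ on $\overline{\Bun}_{B,d}(X)$. Write $S = \overline{\Bun}_{B,d}(X)$, let $\lambda \subset E$ be the universal $B$-bundle datum on $X_S$, and let $p : X_S \to S$, $\pi : X_S \to X$ be the two projections. First, I would reinterpret conditions (1) and (2) of Definition \ref{global springer}: together they are equivalent to the assertion that $\varphi : E \to E \otimes \pi^*\L$ factors as
\[
E \twoheadrightarrow E/\lambda \xrightarrow{\overline{\varphi}} (\lambda \otimes \pi^*\L)(-\df(\lambda)) \hookrightarrow E \otimes \pi^*\L,
\]
so specifying $\varphi$ is equivalent to specifying a morphism $\overline{\varphi}$. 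Such a $\varphi$ is automatically nilpotent: $\tr\varphi = 0$ is forced by $G = SL_2$, and $\varphi^2 = 0$ follows because $\varphi \otimes \mathrm{id}_\L$ vanishes on $\lambda \otimes \pi^*\L$ by condition (1).

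The main step is to identify the sheaf of such morphisms $\overline{\varphi}$. Set $\widetilde{\lambda} := \lambda \otimes \O_{X_S}(\df(\lambda))$, which is a line bundle because Proposition \ref{defect S} exhibits $\df(\lambda)$ as a relative effective Cartier divisor. A short local computation shows that the inclusion $\lambda \hookrightarrow E$ extends uniquely to an inclusion $\widetilde{\lambda} \hookrightarrow E$, and that the cokernel $F := E/\widetilde{\lambda}$ is a line bundle on $X_S$. The $SL_2$ hypothesis $\det E \simeq \O_{X_S}$ then gives $F \simeq \widetilde{\lambda}^{-1} \simeq \lambda^{-1}(-\df(\lambda))$. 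Since the target $(\lambda \otimes \pi^*\L)(-\df(\lambda))$ is torsion-free, every $\overline{\varphi}$ kills the torsion subsheaf $\widetilde{\lambda}/\lambda \subset E/\lambda$ and therefore factors through $F$, yielding
\[
\mathcal{H}om_{\O_{X_S}}(E/\lambda, (\lambda \otimes \pi^*\L)(-\df(\lambda))) \;\simeq\; F^{-1} \otimes (\lambda \otimes \pi^*\L)(-\df(\lambda)) \;\simeq\; \lambda^{\otimes 2} \otimes \pi^*\L.
\]

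With this identification, the fiber of $\widehat{\N}_d \to S$ over an $S$-point is $H^0(X_S, \lambda^{\otimes 2} \otimes \pi^*\L)$. On any geometric fiber $X_s \simeq \mathbb{P}^1$, condition (3) of Definition \ref{global springer} forces the line bundle $\lambda^{\otimes 2} \otimes \pi^*\L$ to have nonnegative degree $2d + \deg\L$, so $H^1$ vanishes fiberwise and $h^0 = 2d + \deg\L + 1$ is constant. Cohomology and base change then gives $R^1 p_*(\lambda^{\otimes 2} \otimes \pi^*\L) = 0$ and $p_*(\lambda^{\otimes 2} \otimes \pi^*\L)$ locally free of the claimed rank, whose total space is $\widehat{\N}_d$.

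The main obstacle is the extension step $\lambda \hookrightarrow E \rightsquigarrow \widetilde{\lambda} \hookrightarrow E$ and the verification that $F = E/\widetilde{\lambda}$ is a line bundle in the relative, not just pointwise, setting. Proposition \ref{defect S} is essential here, since it provides $\df(\lambda)$ as a relative Cartier divisor, allowing the saturation to be performed uniformly across the family by gluing local constructions.
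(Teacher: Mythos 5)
Your proof is correct and follows essentially the same plan as the paper's: realize the fiber product $\widehat{\N}_{S,d}$ as the total space of $p_*(\lambda^{\otimes 2} \otimes \pi^*\L)$ and deduce local freeness of rank $2d+\deg(\L)+1$ from $H^1$-vanishing on $\mathbb{P}^1$. You make explicit the identification of the fiber sheaf with $\lambda^{\otimes 2} \otimes \pi^*\L$ (factoring $\varphi$ through the saturation $\widetilde{\lambda}$, using $\det E \simeq \O_{X_S}$ to get $F = E/\widetilde{\lambda} \simeq \widetilde{\lambda}^{-1}$, and torsion-freeness of the target) where the paper simply asserts it, and you invoke cohomology and base change in place of the paper's argument via constancy of fiber dimension over a reduced base, which avoids the step of passing to a reduced atlas.
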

\begin{proof}
For any $S$-point of $\overline{\Bun}_{B,d}(X)$, consider the
following pull-back diagram.
\begin{diagram}
\widehat{\N}_{S,d}:= S \underset{\overline{\Bun}_{B,d}(X)}{\times}
\widehat{\N}_d & \rTo & \widehat{\N}_d \\
\dTo && \dTo \\
S & \rTo & \overline{\Bun}_{B,d}(X)
\end{diagram}

If $S \to \overline{\Bun}_{B,d}(X)$ corresponds to $\lambda \subset
E$, then $\widehat{\N}_{S,d}$ is the total space of the sheaf
$p_{\ast}(\lambda^{\otimes 2} \otimes \pi^{\ast}\L)$, which is
coherent because $p$ is proper.  To show that
$p_{\ast}(\lambda^{\otimes 2} \otimes \pi^{\ast}\L)$ is locally
free, it therefore suffices to show that its fibers have constant
dimension, provided that $S$ is reduced.  Since
$\deg(\lambda^{\otimes 2} \otimes \pi^{\ast}\L) \geq 0$ and $X = \mathbb{P}^1$,
\begin{equation*}
h^0(X_s,\lambda^{\otimes 2}_s \otimes \L) = \chi(X_s, \lambda^{\otimes 2}_s \otimes \L) = 2d + \deg(\L) + 1
\end{equation*}
for every $s \in S$. Finally, to show that $\widehat{\N}_d \to
\overline{\Bun}_{B,d}(X)$ is a vector bundle, it suffices to show
that $\widehat{\N}_{S,d} \to S$ is a vector bundle when $S$ is an
atlas for $\overline{\Bun}_{B,d}(X)$.  Since
$\overline{\Bun}_{B,d}(X)$ is reduced, so is its atlas.
\end{proof}

Due to Theorem \ref{resolution 0}, we will refer to $\widehat{\N}$
as a \em global Springer resolution \em when $X$ is rational.

\begin{cor}\label{dim genus 0}
Each connected component of $\widehat{\N}$ has dimension $\deg(\L) - 1$.
\end{cor}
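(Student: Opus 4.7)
The plan is to deduce the dimension count by simply adding the rank of the fiber supplied by Proposition 3.10 to the dimension of the base $\overline{\Bun}_{B,d}(X)$. There is no genuine obstacle here: the corollary is a direct numerical consequence of the two preceding results, and the point is to observe that the $d$-dependence cancels.

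First, by Proposition 3.10 the map $\widehat{\N}_d \to \overline{\Bun}_{B,d}(X)$ is a vector bundle, hence
\begin{equation*}
\dim \widehat{\N}_d = \dim \overline{\Bun}_{B,d}(X) + (2d + \deg(\L) + 1).
\end{equation*}
Next, I would compute the base dimension using the corollary to Proposition 2.14. Specializing to $G = SL_2$ (so $\dim(B) = 2$), and identifying the coweight lattice $\Lambda_G$ with $\mathbb{Z}$ via the coroot $\alpha^{\vee}$, the pairing of the coweight $d \cdot \alpha^{\vee}$ with the unique positive root $\theta$ is $2d$, so in the notation of Proposition 2.14 one has $2|\alpha| = 2d$. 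Setting $g = 0$ then gives $\dim \overline{\Bun}_{B,d}(X) = -2d - 2$.

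Adding the two contributions yields
\begin{equation*}
\dim \widehat{\N}_d = (-2d - 2) + (2d + \deg(\L) + 1) = \deg(\L) - 1,
\end{equation*}
which is independent of $d$, as required. The same arithmetic disposes of the boundary components indexed by square roots $L$ of $\L^{-1}$ (of which there is $2^{2g} = 1$ on $\mathbb{P}^1$): here $\deg(\lambda) = -\tfrac{1}{2}\deg(\L)$, the fiber is $H^0(X,\O) \cong \mathbb{C}$ of rank $1$, and the base has dimension $\deg(\L) - 2$, again summing to $\deg(\L) - 1$.
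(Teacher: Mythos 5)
Your proof is correct and takes essentially the same route as the paper: combine the vector bundle rank from Proposition 3.10 with the base dimension from Proposition 2.14 (with $\dim(B)=2$, $g=0$) and observe that the $d$-dependence cancels. The separate treatment of the boundary component $d = -\tfrac{1}{2}\deg(\L)$ is a harmless elaboration, but note it is already subsumed by the main calculation, since in genus $0$ the rank formula $2d+\deg(\L)+1$ from Proposition 3.10 applies uniformly to that case as well.
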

\begin{proof}
By Proposition \ref{smooth 0} and Proposition \ref{dimension B}, the dimension of $\widehat{\N}_d$ is given by
\begin{equation*}
(2d + \deg(\L) + 1) + (-2d - 2) = \deg(\L) - 1.
\end{equation*}
\end{proof}

\begin{cor}\label{equidim 0}
$\N$ is equidimensional of dimension $\deg(\L) - 1$.
\end{cor}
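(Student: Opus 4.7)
The plan is to deduce equidimensionality of $\N$ from the already-established equidimensionality of $\widehat{\N}$ by transporting dimensions along the proper birational map $\mu:\widehat{\N}\to\N$ of Theorem \ref{resolution 0}.

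First I would observe that $\mu$ is surjective. By Proposition \ref{proper} the image $\mu(\widehat{\N})$ is a closed substack of $\N$, and by Proposition \ref{birational} this image contains the open dense substack $\N^{\mathrm{gl,reg}}$. A closed substack containing a dense open is the whole space, so $\mu$ is surjective onto $\N$.

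Next I would record that $\widehat{\N}$ itself is equidimensional of dimension $\deg(\L)-1$. Each connected component $\widehat{\N}_d$ is, by Proposition \ref{smooth 0}, a vector bundle of rank $2d+\deg(\L)+1$ over the connected (and smooth, hence irreducible) stack $\overline{\Bun}_{B,d}(X)$, so $\widehat{\N}_d$ is irreducible, and Corollary \ref{dim genus 0} already gives $\dim\widehat{\N}_d=\deg(\L)-1$ for every $d$.

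Finally I would combine these facts. Since $\mu$ is proper and surjective, every irreducible component $Z$ of $\N$ is the image $\mu(\widetilde{Z})$ of some irreducible component $\widetilde{Z}$ of $\widehat{\N}$. Since $\mu$ restricts to an isomorphism over the dense open $\N^{\mathrm{gl,reg}}$, the generic fiber of $\mu|_{\widetilde{Z}}:\widetilde{Z}\to Z$ is a single point, so $\dim Z=\dim\widetilde{Z}=\deg(\L)-1$. This holds for every irreducible component, proving equidimensionality with the claimed dimension.

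I do not expect any real obstacle; the argument is a formal consequence of the fact that a proper birational morphism between (possibly singular) stacks induces a dimension-preserving surjection on irreducible components, and all the hard work has been done in establishing Theorem \ref{resolution 0} and Corollary \ref{dim genus 0}.
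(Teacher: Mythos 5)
Your argument is correct and follows essentially the same route as the paper's own proof: both deduce the statement from Theorem \ref{resolution 0}, Corollary \ref{dim genus 0}, and Propositions \ref{proper} and \ref{birational}, transporting dimensions along the proper birational map $\mu$. One small point worth tightening: when you write that an arbitrary irreducible component $Z$ of $\N$ is $\mu(\widetilde{Z})$ for some component $\widetilde{Z}$, you should choose $\widetilde{Z}$ to be a component meeting $\mu^{-1}(\N^{\mathrm{gl,reg}})$ (such a component exists because $\N^{\mathrm{gl,reg}}$ is open and dense, hence meets $Z$, and its preimage is nonempty and open in $\widehat{\N}$); only then is the generic fiber of $\mu|_{\widetilde{Z}}$ a point and the dimension equality guaranteed. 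The paper, for its part, simply asserts that the irreducible components of $\N$ are in bijection with the connected components $\widehat{\N}_d$ and that the birational equivalence preserves dimension, which is the same chain of reasoning stated more tersely.
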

\begin{proof}
Since $\widehat{\N}$ is a resolution of singularities of $\N$ by
Theorem \ref{resolution 0}, the irreducible components $\N_d$ of
$\N$ are in bijection with the connected components $\widehat{\N}_d$
of $\widehat{\N}$.  Furthermore, the birational equivalence between
$\widehat{\N}_d$ and $\N_d$ for each $d$ implies that their
dimensions are the same.  Then Corollary \ref{dim genus 0} implies
that $\N$ is equidimensional of dimension $\deg(\L) - 1$.
\end{proof}

\subsection{A global Springer resolution in genus 1}\label{ss:genus
1}

$X$ denotes an elliptic curve in this section, and $\L$ is allowed to be any line bundle on $X$ of even degree at least $2$. Showing that $\widehat{\N}$ is a resolution of $\N$ for an elliptic
curve only requires slightly more care than it did for the
projective line. Indeed, if $\lambda$ is a line bundle on X of
nonnegative degree, then it is almost true that $h^0(\lambda)$ is a
linear function of the degree of $\lambda$. The only exception is
when $\deg(\lambda) = 0$, in which case $h^0(\lambda) = 0 \text{ or
}1$, depending on whether or not $\lambda = \O_X$.

\begin{thm}\label{resolution 1}
When $X$ is an elliptic curve, the projection map $\mu:\widehat{\N}
\to \N$ is a resolution of singularities.
\end{thm}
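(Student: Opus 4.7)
The plan is to mimic the argument from Section \ref{ss:genus 0}. By Proposition \ref{proper} and Proposition \ref{birational}, $\mu$ is already known to be proper and a birational equivalence, so the only remaining task is to show that $\widehat{\N}$ is smooth. Since $\overline{\Bun}_B(X)$ is smooth, this will follow once I show that the projection
\begin{equation*}
\widehat{\N}_d \longrightarrow \overline{\Bun}_{B,d}(X)
\end{equation*}
is a vector bundle for each connected component, exactly as in Proposition \ref{smooth 0}. For an $S$-point of $\overline{\Bun}_{B,d}(X)$ corresponding to $\lambda \subset E$, the fiber is the total space of the coherent sheaf $p_{\ast}(\lambda^{\otimes 2} \otimes \pi^{\ast}\L)$, so I need to show this sheaf is locally free of constant rank on each component.

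The key input from the elliptic case is that, since $\omega_X \simeq \O_X$ and $\chi(M) = \deg(M)$ for any line bundle $M$ on $X$, one has $h^1(M) = h^0(M^{-1}) = 0$ as soon as $\deg(M) > 0$, and hence $h^0(M) = \deg(M)$ in that range. I will split into two cases according to condition (3) of Definition \ref{global springer}. In Case A, $d > -\tfrac{1}{2}\deg(\L)$, so $\deg(\lambda^{\otimes 2} \otimes \L) = 2d + \deg(\L) > 0$, giving $h^0(X_s,\lambda_s^{\otimes 2}\otimes\L) = 2d+\deg(\L)$ constant over geometric points $s\in S$; cohomology and base change over a reduced atlas then forces $p_{\ast}(\lambda^{\otimes 2}\otimes\pi^{\ast}\L)$ to be locally free of this rank, just as in the proof of Proposition \ref{smooth 0}.

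Case B, with $d = -\tfrac{1}{2}\deg(\L)$, is where I expect the only real subtlety: here $\lambda^{\otimes 2}\otimes\L$ has fiberwise degree zero, and condition (3) of Definition \ref{global springer} forces the fiber $\lambda_s^{\otimes 2}\otimes\L$ to be trivial at every geometric point $s\in S$ (this is why the components are indexed by the $2^{2g}=4$ square roots of $\L^{-1}$). By the seesaw principle for the Picard scheme, there exists a line bundle $\mu$ on $S$ with $\lambda^{\otimes 2}\otimes\pi^{\ast}\L \simeq p^{\ast}\mu$, so by the projection formula and $p_{\ast}\O_{X_S} = \O_S$, the pushforward $p_{\ast}(\lambda^{\otimes 2}\otimes\pi^{\ast}\L)\simeq \mu$ is a line bundle on $S$. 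Equivalently, $h^0$ is constantly $1$ on each square-root component, and cohomology and base change applies as before.

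Combining the two cases, $\widehat{\N}_d \to \overline{\Bun}_{B,d}(X)$ is a vector bundle of rank $2d+\deg(\L)$ (resp.\ rank $1$) on each component, which yields smoothness of $\widehat{\N}$. The main potential obstacle is Case B, where the degree zero condition makes $h^0$ a priori jumping; the rescue is precisely condition (3) of Definition \ref{global springer}, which restricts us to the components where $\lambda^{\otimes 2}\otimes\L$ is fiberwise trivial and hence forces $h^0$ to be constant equal to $1$.
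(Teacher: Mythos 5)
Your argument for the components with $d > -\tfrac{1}{2}\deg(\L)$ matches the paper's, and your local analysis of the exceptional components is also on target: condition (3) of Definition~\ref{global springer} forces $\lambda^{\otimes 2}\otimes \L$ to be fiberwise trivial, and the seesaw argument correctly identifies $p_{\ast}(\lambda^{\otimes 2}\otimes\pi^{\ast}\L)$ as a line bundle there. However, there is a real gap in how you conclude smoothness. Your declared strategy is to show that $\widehat{\N}_d \to \overline{\Bun}_{B,d}(X)$ is a vector bundle and then invoke smoothness of $\overline{\Bun}_{B,d}(X)$. This works in Case~A, where condition (3) is automatic. But in Case~B, condition (3) is a nontrivial closed condition: the map $\widehat{\N}_d \to \overline{\Bun}_{B,d}(X)$ factors through the proper closed substack $\overline{\Bun}_{B,d}^0(X) := (\overline{r}_{d,\L})^{-1}(\mathcal{W}^0_0(X))$, over which it is a line bundle, and is \emph{not} a vector bundle over all of $\overline{\Bun}_{B,d}(X)$. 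You establish the line-bundle structure over the right base, but never address whether that base is smooth — and since it is a closed substack of $\overline{\Bun}_{B,d}(X)$ cut out by a Fitting-rank condition, its smoothness is exactly what has to be proved.

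The paper handles this by observing that $\mathcal{W}^0_0(X) \simeq B\mathbb{G}_m$ is smooth and that $\overline{r}_{d,\L} : \overline{\Bun}_{B,d}(X) \to \Bun_{\mathbb{G}_m}(X)$ is smooth, so that the preimage $\overline{\Bun}_{B,d}^0(X)$ is smooth. The smoothness of $\overline{r}_{d,\L}$ is not a triviality: it comes down to Lemma~\ref{smooth lemma}, which requires a genuine cocycle computation to extend the familiar smoothness of $\Bun_B(X)\to\Bun_T(X)$ to the Drinfeld/Laumon compactification. That lemma (or an equivalent verification) is the ingredient your argument is missing. Without it, you have only shown that $\widehat{\N}_d$ is smooth \emph{relative to} a substack whose own smoothness has not been established.
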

\begin{proof}
$\mu$ is a proper, birational equivalence by Proposition
\ref{proper} and Proposition \ref{birational}. The smoothness of
$\widehat{\N}_d$ for $d > -\frac{1}{2}\deg(\L)$ follows from
Proposition \ref{smooth 1} and the smoothness of
$\overline{\Bun}_B(X)$. When $d = -\frac{1}{2}\deg(\L)$, we are only
considering those $(\lambda \subset E) \in \overline{\Bun}_{B,d}(X)$
such that $h^0(\lambda^{\otimes 2} \otimes \L) \geq 1$. Then let
\begin{equation*}
\overline{r}_{d,\L}:\overline{\Bun}_{B,d}(X) \to
\Bun_{\mathbb{G}_m,2d+\deg(\L)}(X)
\end{equation*}
denote the map given by
\begin{equation*}
(\lambda \subset E) \mapsto \lambda^{\otimes 2} \otimes \L.
\end{equation*}
Just as we for curves of genus at least $2$, we can consider the substack $\mathcal{W}^0_0(X) \subset \Bun_{\mathbb{G}_m,0}(X)$ consisting of degree $0$ line bundles which possess a nonzero global section. Setting
\begin{equation*}
\overline{\Bun}_{B,d}^0(X) :=
(\overline{r}_{d,\L})^{-1}(\mathcal{W}^0_0(X)),
\end{equation*}
the smoothness of $\widehat{\N}_d$ follows from Proposition
\ref{smooth 1}, the smoothness of $\overline{r}_{d,\L}$, and the
smoothness of $\mathcal{W}^0_0(X) \simeq B\mathbb{G}_m$.
\end{proof}

\begin{prop}\label{smooth 1}
First suppose that $d > -\frac{1}{2}\deg(\L)$.  Then the projection
map $\widehat{\N}_d \to \overline{\Bun}_{B,d}(X)$ is a vector bundle
of rank $2d + \deg(\L)$. When $d = -\frac{1}{2}\deg(\L)$,
$\widehat{\N}_d \to \overline{\Bun}_{B,d}^0(X)$ is a line bundle.
\end{prop}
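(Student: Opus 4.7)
The plan is to mirror the argument used for Proposition \ref{smooth 0}, with extra attention to the degenerate situation at the boundary $d = -\tfrac{1}{2}\deg(\L)$, where the constancy of $h^0$ can fail.

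First, I would form the pullback along any $S$-point of $\overline{\Bun}_{B,d}(X)$ corresponding to $\lambda \subset E$:
\begin{equation*}
\begin{diagram}
\widehat{\N}_{S,d} & \rTo & \widehat{\N}_d \\
\dTo && \dTo \\
S & \rTo & \overline{\Bun}_{B,d}(X).
\end{diagram}
\end{equation*}
Exactly as in the genus 0 case, condition (2) of Definition \ref{global springer} identifies $\widehat{\N}_{S,d}$ with the total space of the coherent sheaf $p_{\ast}(\lambda^{\otimes 2} \otimes \pi^{\ast}\L)$. By standard base change it will suffice, taking $S$ to be a reduced atlas, to show that the fiber dimension $h^0(X_s, \lambda_s^{\otimes 2} \otimes \L)$ is locally constant on $S$.

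For the generic case $d > -\tfrac{1}{2}\deg(\L)$, the line bundle $\lambda_s^{\otimes 2} \otimes \L$ has strictly positive degree $2d + \deg(\L)$. On an elliptic curve, every line bundle of positive degree has vanishing $H^1$, so Riemann-Roch gives
\begin{equation*}
h^0(X_s, \lambda_s^{\otimes 2} \otimes \L) = \chi(X_s, \lambda_s^{\otimes 2} \otimes \L) = 2d + \deg(\L),
\end{equation*}
which is the desired constant rank. The argument here is routine once one observes that $g=1$ makes $\chi$ coincide with $h^0$ in positive degree.

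The main obstacle, and the only place where genuine care is required, is the boundary case $d = -\tfrac{1}{2}\deg(\L)$, for then $\deg(\lambda^{\otimes 2} \otimes \L) = 0$ and on an elliptic curve a degree zero line bundle has $h^0 = 1$ precisely when it is trivial and $h^0 = 0$ otherwise. This is exactly why one restricts to $\overline{\Bun}_{B,d}^0(X) = (\overline{r}_{d,\L})^{-1}(\mathcal{W}^0_0(X))$: on an elliptic curve $\mathcal{W}^0_0(X)$ consists of a single point, namely the trivial line bundle, so on the preimage $\lambda_s^{\otimes 2} \otimes \L \simeq \O_{X_s}$ for every $s$, whence $h^0(X_s, \lambda_s^{\otimes 2} \otimes \L) = 1$ is constant. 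Thus $p_{\ast}(\lambda^{\otimes 2} \otimes \pi^{\ast}\L)$ is a line bundle and $\widehat{\N}_d \to \overline{\Bun}_{B,d}^0(X)$ is a line bundle as claimed, completing the proof.
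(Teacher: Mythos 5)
Your proposal is correct and follows exactly the paper's approach, which is to run the argument of Proposition \ref{smooth 0} verbatim: identify the pullback over an $S$-point with the total space of $p_{\ast}(\lambda^{\otimes 2} \otimes \pi^{\ast}\L)$ and show this is locally free by proving constancy of $h^0$ on a reduced atlas. The paper records this proof as ``identical to that of Proposition \ref{smooth 0}''; your version usefully spells out the genus-1 input (vanishing of $H^1$ in positive degree, and $\mathcal{W}^0_0(X)$ consisting of the trivial bundle) that makes the argument go through at and above the boundary degree.
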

\begin{proof}
The proof is identical to that of Proposition \ref{smooth 0}.
\end{proof}

Due to Theorem \ref{resolution 1}, we will refer to $\widehat{\N}$
as a \em global Springer resolution \em when $X$ is an elliptic
curve.

As in the case of genus $0$, we can now extract corollaries about the equidimensionality of $\widehat{\N}$ and $\N$.

\begin{cor}
Each connected component of $\widehat{\N}$ has dimension $\deg(\L)$. Moreover, $\N$ is equidimensional of dimension $\deg(\L)$.
\end{cor}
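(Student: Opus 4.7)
The plan is to mirror the proof of Corollary \ref{equidim 0} in the rational case, computing the dimension of each connected component $\widehat{\N}_d$ via the fibration descriptions of Proposition \ref{smooth 1} and then transferring equidimensionality to $\N$ through the birational equivalence of Theorem \ref{resolution 1}. For the generic strata $d > -\tfrac{1}{2}\deg(\L)$, Proposition \ref{smooth 1} presents $\widehat{\N}_d$ as a rank $2d + \deg(\L)$ vector bundle over $\overline{\Bun}_{B,d}(X)$. Combining Proposition \ref{dimension B} (which applies to $\overline{\Bun}_{B,d}(X)$ via the density statement of Proposition \ref{compactification}) with $\dim(B) = 2$ and $g=1$, the base has dimension $-2d$, so
\begin{equation*}
\dim \widehat{\N}_d = (2d + \deg(\L)) + (-2d) = \deg(\L).
\end{equation*}

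The delicate step is the boundary stratum $\widehat{\N}_{-\frac{1}{2}\deg(\L)}$, where Proposition \ref{smooth 1} only provides a line bundle over the smaller substack $\overline{\Bun}_{B,d}^0(X) = \overline{r}_{d,\L}^{-1}(\mathcal{W}_0^0(X))$. I would compute the dimension of this base by using the smoothness of $\overline{r}_{d,\L}$ (established in the proof of Theorem \ref{resolution 1}): its relative dimension is $\dim \overline{\Bun}_{B,d}(X) - \dim \Bun_{\mathbb{G}_m,0}(X) = -2d - (g-1) = \deg(\L)$. Since $\mathcal{W}_0^0(X) \simeq B\mathbb{G}_m$ has (stacky) dimension $-1$, the preimage has dimension $\deg(\L) - 1$, and adding the line-bundle rank of $1$ again gives total dimension $\deg(\L)$. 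This accounts for each of the $2^{2g} = 4$ connected components indexed by square roots of $\L^{-1}$.

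Equidimensionality of $\N$ is then a direct consequence. By Theorem \ref{resolution 1}, $\mu: \widehat{\N} \to \N$ is a proper birational equivalence, so its image identifies the irreducible components of $\N$ with the connected components of $\widehat{\N}$ in a dimension-preserving way. Each such component has dimension $\deg(\L)$ by the computations above, hence $\N$ is equidimensional of dimension $\deg(\L)$.

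The only point requiring real care is the bookkeeping at the boundary stratum: one must remember that $\mathcal{W}_0^0(X)$ is the classifying stack $B\mathbb{G}_m$ (of dimension $-1$), not a point, so that the stacky $-1$ precisely cancels the $+1$ from the line-bundle fiber to produce the expected answer. I do not foresee any other obstacle---the argument is essentially the same dimension count as in the rational case, with only this minor stacky correction.
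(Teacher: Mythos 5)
Your proposal is correct and follows essentially the same route as the paper: compute $\dim\widehat{\N}_d$ from the fibration over $\overline{\Bun}_{B,d}(X)$ (or over $\overline{\Bun}_{B,d}^0(X)$ at the boundary degree), and then transfer through the birational equivalence of Theorem \ref{resolution 1}. The only cosmetic difference is that for the relative dimension of $\overline{r}_{d,\L}$ you take the difference $\dim\overline{\Bun}_{B,d}(X)-\dim\Bun_{\mathbb{G}_m,0}(X)$ directly, whereas the paper factors through an \'{e}tale map and computes the fiber of $r_d$ as $\Ext^1(\lambda^{-1},\lambda)/\Hom(\lambda^{-1},\lambda)$ via Riemann--Roch; both yield $-2d=\deg(\L)$, and the rest of the bookkeeping (the $-1$ from $\mathcal{W}_0^0(X)\simeq B\mathbb{G}_m$ canceling the $+1$ from the line-bundle fiber) is identical.
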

\begin{proof}
The only part of the proof not identical to Corollary \ref{dim genus 0} and Corollary \ref{equidim 0} regards the dimension of the four connected components of $\widehat{\N}_{-\frac{1}{2}\deg(\L)}$, in which case it is necessary to compute the dimension of $\overline{\Bun}_{B,-\frac{1}{2}\deg(\L)}^0(X)$. Setting $d = -\frac{1}{2}\deg(\L)$, the map
\begin{equation*}
\overline{r}_{d,\L}:\overline{\Bun}_{B,d}(X) \to
\Bun_{\mathbb{G}_m,2d+\deg(\L)}(X)
\end{equation*}
factors as
\begin{equation}\label{factors}
\begin{diagram}
\overline{\Bun}_{B,d}(X)& \rTo & \Bun_{\mathbb{G}_m,d}(X) & \rTo &
\Bun_{\mathbb{G}_m,2d+\deg(\L)}(X)\\
(\lambda \subset E)& \rMapsto & \lambda & \rMapsto &
\lambda^{\otimes 2}\otimes \L
\end{diagram}
\end{equation}
The second map in \eqref{factors} is \'{e}tale, and the first map is
smooth by Lemma \ref{smooth lemma}, of the same relative dimension
as the corresponding projection map
\begin{equation*}
r_d:\Bun_{B,d}(X) \to \Bun_{\mathbb{G}_m,d}(X).
\end{equation*}
Given a line bundle $\lambda$ of degree $d$ on $X$,
\begin{equation*}
r_d^{-1}(\lambda) =
{\Ext}^1(\lambda^{-1},\lambda)\big/\Hom(\lambda^{-1},\lambda),
\end{equation*}
where $\Hom(\lambda^{-1},\lambda)$ acts trivially on $\Ext^1(\lambda^{-1},\lambda)$. Therefore Riemann-Roch implies that the relative dimension of $r_d$ is
\begin{equation*}
-2d = \deg(\L).
\end{equation*}
Finally, since $\mathcal{W}_0^0(X)$ has codimension $1$ in
$\Bun_{\mathbb{G}_m,0}(X)$, we find that
\begin{equation*}
\dim(\overline{\Bun}_{B,-\frac{1}{2}\deg(\L)}^0(X)) = \deg(\L) - 1.
\end{equation*}
Finally, Proposition \ref{smooth 1} implies that the corresponding
connected component of $\widehat{\N}$ has dimension $\deg(\L)$.
\end{proof}

\subsection{A higher genus global Springer resolution}\label{ss:higher
genus}

In this section, $X$ denotes a smooth, connected projective curve of genus $g \geq 2$, and $\L$ is a line bundle on $X$ of even degree at least $2g$. Since the first cohomology of a line bundle on $X$ is guaranteed to vanish if and only if the degree of the line bundle is at least $2g$, the methods used to study $\widehat{\N}$ when $g = 0 \text{ or }1$ will be insufficient for arbitrary $g$. In order to effectively analyze $\widehat{\N}$, we begin this section by relating the geometry of $\widehat{\N}$ to that of $C\mathcal{G}(X)$. The following lemma is a generalization of the well-known and easily proven fact that $\Bun_B(X) \to \Bun_T(X)$ is smooth.

\begin{lem}\label{smooth lemma}
The projection map $\overline{r}:\overline{\Bun}_B(X) \to
\Bun_{\mathbb{G}_m}(X)$ is smooth.
\end{lem}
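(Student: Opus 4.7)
The plan is to verify the lemma by the infinitesimal lifting criterion for smoothness. Let $R \hookrightarrow R'$ be a square-zero extension of finitely generated $\C$-algebras with ideal $I$; given an $R$-point $(\lambda \subset E) \in \overline{\Bun}_B(X)(R)$ together with a lift $\lambda' \in \Bun_{\mathbb{G}_m}(X)(R')$ of $\lambda$, I must extend $(\lambda \subset E)$ to an $R'$-point of $\overline{\Bun}_B(X)$ lying over $\lambda'$. Set $F := E/\lambda$, which is $R$-flat by the definition of the Laumon compactification and has generic rank $1$. The preliminary observation is that $F$ has projective dimension at most $1$ locally on $X_R$: from any local surjection $V \twoheadrightarrow F$ by a vector bundle, $R$-flatness of $F$ makes the kernel $R$-flat, and since each fibre $F_s$ is a coherent sheaf on the smooth curve $X_s$ (hence of projective dimension $\leq 1$), the kernel has locally free fibres and is therefore locally free.

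This yields the key vanishing $\Ext^i_{X_R}(F, G) = 0$ for $i \geq 2$ and any quasi-coherent $G$, via the local-to-global spectral sequence: $H^{\geq 2}(X_R, -)$ vanishes on quasi-coherent sheaves because $X_R \to \Spec R$ has one-dimensional fibres over an affine base; $\mathcal{E}xt^{\geq 2}(F, -) = 0$ by the projective-dimension bound; and $\mathcal{E}xt^1(F, -)$ is supported on the torsion locus of $F$, which is closed in $X_R$, finite over $\Spec R$, and hence affine, so its higher cohomology vanishes. Using this, I first lift $F$ to an $R'$-flat coherent sheaf $F'$ on $X_{R'}$ with the additional property $\det F' \cong (\lambda')^{-1}$: existence of an $R'$-flat lift is controlled by the obstruction in $\Ext^2_{X_R}(F, F \otimes_R I) = 0$; the lifts form a torsor under $\Ext^1_{X_R}(F, F \otimes_R I)$; and the trace map from this $\Ext$-group to $H^1(X_R, I) \cong \Ext^1(\det F, \det F \otimes I)$ is surjective, because any \v{C}ech $1$-cocycle $\{f_{ij}\}$ with values in $I$ is realised by gluing local trivial lifts of $F$ via the scalar units $1 + f_{ij}$, whose induced determinant cocycle is $\{f_{ij}\}$ itself since $F$ has generic rank $1$. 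Next I lift the extension $0 \to \lambda \to E \to F \to 0$ to $0 \to \lambda' \to E' \to F' \to 0$ on $X_{R'}$, which is possible since the obstruction lies in $\Ext^2_{X_R}(F, \lambda \otimes_R I) = 0$.

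To conclude, $E'$ is $R'$-flat as an extension of $R'$-flat sheaves, and reduces modulo $I$ to the locally free sheaf $E$, so $E'$ is itself locally free of rank $2$; moreover $\det E' \cong \lambda' \otimes \det F' \cong \O_{X_{R'}}$, so $E'$ is an $SL_2$-bundle, $F' = E'/\lambda'$ is $R'$-flat, and $(\lambda' \subset E')$ is the desired $R'$-point of $\overline{\Bun}_B(X)$. The principal obstacle is the determinant-matching in the third step: without the surjectivity of the trace on $\Ext^1_{X_R}(F, F \otimes I)$ a general lift would only yield a $GL_2$-bundle, not an $SL_2$-bundle. The other delicate point is the Ext-vanishing, which relies on the affineness of the torsion locus of $F$ and hence on the $R$-flatness of the cokernel built into the definition of $\overline{\Bun}_B(X)$.
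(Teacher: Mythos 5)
Your proof is correct, complete, and takes a genuinely different route from the paper's. The paper verifies formal smoothness only for the square-zero extension $\C[\epsilon]\twoheadrightarrow\C$ at $\C$-points --- which suffices because both $\overline{\Bun}_B(X)$ and $\Bun_{\mathbb{G}_m}(X)$ are smooth stacks --- and does so by an explicit \v{C}ech computation: writing the inclusion $\lambda\hookrightarrow E$ locally as a pair of sections $(s_j,t_j)$, it exhibits a trace-free matrix $1$-cocycle $\theta_{ij}$ solving $\theta f=\phi f$ in $H^1(X,E\otimes\lambda^{-1})$. Your argument treats an arbitrary square-zero thickening, locates the obstructions to lifting $F=E/\lambda$ and to lifting the extension class in $\Ext^2_{X_R}(F,-)$, and kills them through the local projective-dimension bound, the local-to-global spectral sequence, and the affineness of the torsion support of $F$ (which rests on the $R$-flatness built into Definition \ref{laumon compactification} and, in essence, on Proposition \ref{defect S}). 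Your determinant-matching step --- surjectivity of the trace map $\Ext^1_{X_R}(F,F\otimes I)\to H^1(X_R,I)$ via scalar regluing --- is exactly what the $SL_2$ (as opposed to $GL_2$) structure demands, and is the deformation-theoretic counterpart of the paper's insistence that $\theta_{ij}$ be trace-free. The trade-offs: the paper's proof is short and concrete, but its explicit cocycle contains the ratio $t_j/s_j$ and so tacitly requires a judicious choice of trivializing cover at points of the defect divisor where $s_j$ and $t_j$ both vanish; yours is longer but does not invoke smoothness of the source or any explicit formula, and it isolates the geometric reason for smoothness --- one-dimensional target fibres plus affine torsion support kill every relevant $\Ext^2$.
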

\begin{proof}
Since $\overline{\Bun}_B(X)$ and $\Bun_{\mathbb{G}_m}(X)$ are smooth stacks, it suffices to show that $\overline{r}$ satisfies the formal smoothness criteria for $R = \C[\epsilon]$ and $I = (\epsilon)$. In other words, given a $\C$-point $f:\lambda \hookrightarrow E$ of $\overline{\Bun}_B(X)$ and a first-order deformation $\phi \in H^1(X,\O_X)$ of $\lambda$, it suffices to find a first-order deformation $\theta \in H^1(X,\mathrm{ad}(E))$ of $E$ and a first-order deformation $\tilde{f}:\lambda \hookrightarrow E$ of $f$ such that the triple $(\phi,\theta,\tilde{f})$ forms a first-order deformation of $f:\lambda \hookrightarrow E$.

Let $g_{ij}$ denote the transition $1$-cocycle for $\lambda$ and let $h_{ij}$ denote the transition $1$-cocycle for $E$, relative to a common trivializing open cover $\{ U_i \}$ for $\lambda$ and $E$. Then the condition that $f$ is an injection from $\lambda$ to $E$ means that $f \neq 0$ and that
\begin{equation*}
f_ig_{ij} = h_{ij}f_j,
\end{equation*}
along with the usual cocycle condition.

Given a fixed $\phi \in H^1(X,\O_X)$, a new transition
$1$-cocycle for the corresponding deformation of $\lambda$ is given by
\begin{equation*}
\tilde{g}_{ij} = (g_{ij} + \phi_{ij}\epsilon).
\end{equation*}
Similarly, given a choice of $\theta \in H^1(X,\mathrm{ad}(E))$, a transition $1$-cocycle for the corresponding deformation of $E$ is given by
\begin{equation*}
\tilde{h}_{ij} = (h_{ij} + \theta_{ij}\epsilon).
\end{equation*}
Finally, choices of $\tilde{f}_i:\lambda|_{U_i} \hookrightarrow E|_{U_i}$ give a first-order deformation of $f:\lambda \hookrightarrow E$, along with $\phi$ and $\theta$, if and only if
\begin{equation}\label{deformation}
(f_i + \tilde{f}_i\epsilon)\tilde{g}_{ij} = \tilde{h}_{ij}(f_j +
\tilde{f}_j\epsilon).
\end{equation}
Expanding \eqref{deformation}, we equivalently require that
\begin{align}
f_ig_{ij}& = h_{ij}f_j,\\
f_i\phi_{ij} - \theta_{ij}f_j& = h_{ij}\tilde{f}_j -
\tilde{f}_ig_{ij}.
\end{align}
The first equation is simply a restatement of the fact that $f:\lambda \to E$. In the second equation, the right-hand side is a coboundary, which means that $f_i\phi_{ij} - \theta_{ij}f_j=0$ as a cohomology class. Thus it suffices to find $\theta$ such that
\begin{equation*}
\theta f = \phi f \in H^1(X, E \otimes \lambda^{-1}).
\end{equation*}

Writing $f_j = \begin{pmatrix}s_j\\ t_j\end{pmatrix}$, we may then choose
\begin{equation*}
\theta_{ij} = \begin{pmatrix}(s_jt_j + 1)\phi_{ij} & -s_j^2\phi_{ij}\\ (t_j^2 + 2\frac{t_j}{s_j})\phi_{ij} & -(s_jt_j + 1)\phi_{ij}\end{pmatrix},
\end{equation*}
where all occurrences of $s_j$ and $t_j$ denote their restrictions to $U_{ij}$. The fact that this choice of $\theta_{ij}$ does define a $1$-cocycle with values in $\mathfrak{sl}_2$ follows from the fact that each of its entries are $1$-cocycles with values in $\C$.
\end{proof}

To relate $\widehat{\N}$ to $C\mathcal{G}(X)$, consider a variant of $C\mathcal{G}(X)$. Namely, let $C\mathcal{G}(X,\L)$ denote the moduli stack whose $S$-points consist of all pairs $(\lambda,s)$ where $\lambda^{\otimes 2} \otimes \pi^{\ast}\L \in \mathcal{W}^0_d(X)(S)$, and $s \in H^0(X_S,\lambda^{\otimes 2} \otimes \pi^{\ast}\L)$. Then $C\mathcal{G}(X,\L)$ fits into the following pull-back diagram.
\begin{equation}\label{cgdl}
\begin{diagram}
C\mathcal{G}(X,\L) & \rTo & C\mathcal{G}(X)\\
\dTo && \dTo\\
\Bun_{\mathbb{G}_m}(X) & \rTo^{\mathrm{Sq}_{\L}} & \Bun_{\mathbb{G}_m}(X)
\end{diagram}
\end{equation}

$\mathrm{Sq}_{\L}$ is the map which sends an $S$-point $\lambda$ to $\lambda^{\otimes 2} \otimes \pi^{\ast}\L$. Since the squaring map is \'{e}tale and the map which tensors with a fixed line bundle is an isomorphism, the map $C\mathcal{G}(X,\L) \to C\mathcal{G}(X)$ is \'{e}tale, which induces an \'{e}tale surjection (using the fact that $\deg(\L)$ is even)
\begin{equation*}
C\mathcal{G}_d(X,\L) \to C\mathcal{G}_{2d+\deg(\L)}(X)
\end{equation*}
from the degree $d$ connected component to the degree $2d+\deg(\L)$ connected component for each integer $d$.  Therefore, geometric properties of $C\mathcal{G}_d(X,\L)$ such as smoothness and irreducibility follow from the corresponding properties of $C\mathcal{G}_{2d+\deg(\L)}(X)$. Thus we have the following corollary of Proposition \ref{cgd}.

\begin{cor}
$C\mathcal{G}_d(X,\L)$ is irreducible of dimension $2d+\deg(\L)$. If $2d+\deg(\L) \leq 2g-2$, the singular locus of $C\mathcal{G}_d(X,\L)$ consists of pairs $(\lambda,0)$ where
\begin{enumerate}
\item  $\lambda^{\otimes 2} \otimes \L \in \mathcal{W}^1_{2d+deg(\L)}(X)$ if $2d+\deg(\L) < g$.
\item  $(\lambda^{\otimes 2} \otimes \L)^{-1} \otimes \omega_X \in \mathcal{W}^0_{2g-2-2d-deg(\L)}(X)$ if $g \leq 2d+\deg(\L) \leq 2g-2$.
\end{enumerate}
If $2d+\deg(\L) > 2g-2$, then $C\mathcal{G}_d(X,\L)$ is smooth.
\end{cor}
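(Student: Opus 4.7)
The plan is to deduce every claim from Proposition~\ref{cgd} by exploiting the \'{e}tale surjection
\begin{equation*}
\pi: C\mathcal{G}_d(X,\L) \to C\mathcal{G}_{2d+\deg(\L)}(X)
\end{equation*}
produced by the Cartesian square~\eqref{cgdl}; \'{e}taleness and surjectivity of $\pi$ follow from the corresponding properties of $\mathrm{Sq}_{\L}$, with surjectivity using that $\deg(\L)$ is even. Because \'{e}tale morphisms preserve dimension and both preserve and detect smoothness at points, the dimension claim $\dim C\mathcal{G}_d(X,\L) = 2d+\deg(\L)$ and smoothness in the range $2d+\deg(\L) > 2g-2$ are immediate.

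For the singular locus in the range $2d+\deg(\L) \leq 2g-2$, a point $(\lambda, s)$ of $C\mathcal{G}_d(X,\L)$ is smooth iff its image $(\lambda^{\otimes 2} \otimes \L, s)$ in $C\mathcal{G}_{2d+\deg(\L)}(X)$ is smooth. Feeding this into the trichotomy of Proposition~\ref{cgd}, the case $2d+\deg(\L) < g$ is direct: singularity occurs iff $s = 0$ and $h^0(X, \lambda^{\otimes 2} \otimes \L) > 1$, i.e., $\lambda^{\otimes 2} \otimes \L \in \mathcal{W}^1_{2d+\deg(\L)}(X)$. For $g \leq 2d+\deg(\L) \leq 2g-2$, Proposition~\ref{cgd} gives singularity iff $s = 0$ and $H^1(X, \lambda^{\otimes 2} \otimes \L) \neq 0$; invoking Serre duality
\begin{equation*}
H^1(X, \lambda^{\otimes 2} \otimes \L) \simeq H^0(X, \omega_X \otimes \lambda^{-2} \otimes \L^{-1})^{\vee},
\end{equation*}
this rewrites as $(\lambda^{\otimes 2} \otimes \L)^{-1} \otimes \omega_X \in \mathcal{W}^0_{2g-2-2d-\deg(\L)}(X)$.

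I expect the main obstacle to be irreducibility, since \'{e}tale pullback need not preserve connectedness: the preimage under $\pi$ of the irreducible stack $C\mathcal{G}_{2d+\deg(\L)}(X)$ could a priori split into up to $2^{2g}$ components, one per element of $\Pic(X)[2]$. Assuming $2d+\deg(\L) > 0$, the dense open substack $U_d \subset C\mathcal{G}_d(X,\L)$ on which $s \neq 0$ is nonempty, and the divisor-of-zeros map $(\lambda, s) \mapsto \mathrm{div}(s)$ exhibits $U_d$ as a $\mu_2$-gerbe over a $\Pic(X)[2]$-torsor over the irreducible variety $\mathrm{Sym}^{2d+\deg(\L)}(X)$. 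The monodromy of this torsor is the composition of the Abel--Jacobi surjection $\pi_1(\mathrm{Sym}^{2d+\deg(\L)}(X)) \twoheadrightarrow \pi_1(\Pic^{2d+\deg(\L)}(X)) = H_1(X,\mathbb{Z})$ with the connecting map $H_1(X,\mathbb{Z}) \twoheadrightarrow \Pic(X)[2]$ of the squaring fibration, and is therefore surjective; hence $U_d$ is connected. Density of $U_d$ in $C\mathcal{G}_d(X,\L)$ then yields irreducibility.
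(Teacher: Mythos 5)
Your proposal follows the paper's overall route (push everything through the \'{e}tale surjection $C\mathcal{G}_d(X,\L) \to C\mathcal{G}_{2d+\deg(\L)}(X)$ and apply Proposition~\ref{cgd}), and the reduction of the dimension count, the smoothness locus, and the Serre-duality rewriting in the range $g \leq 2d+\deg(\L) \leq 2g-2$ are all carried out exactly as the paper intends. Where you go genuinely beyond the paper is in the treatment of irreducibility. The paper asserts that irreducibility ``follows from the corresponding property of $C\mathcal{G}_{2d+\deg(\L)}(X)$,'' but you are right that this is not automatic: being \'{e}tale of degree $2^{2g}$ over an irreducible stack does not by itself force a connected source, and even granting connectedness one still needs something (normality of the target, or a dense smooth connected open) to pass from connected to irreducible. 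Your argument --- restricting to the dense open $U_d$ where $s\neq 0$, recognizing it as a $\mu_2$-gerbe over a $\Pic(X)[2]$-torsor on $\mathrm{Sym}^{2d+\deg(\L)}(X)$, and checking that the monodromy $\pi_1(\mathrm{Sym}^{2d+\deg(\L)}(X)) \twoheadrightarrow H_1(X,\mathbb{Z}) \twoheadrightarrow \Pic(X)[2]$ is onto --- closes that gap cleanly. You also correctly flag the boundary case $2d+\deg(\L)=0$: there $C\mathcal{G}_d(X,\L)$ really does split into $2^{2g}$ pieces indexed by the square roots of $\L^{-1}$, which the paper handles by treating those components of $\widehat{\N}$ separately; the corollary as stated should be read as implicitly requiring $2d+\deg(\L) > 0$. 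In short: same skeleton as the paper, but with a worthwhile extra argument supplied at the one place where the paper's ``follows from'' is doing more work than it acknowledges.
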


To bring $\widehat{\N}$ into the discussion, diagram \eqref{cgdl} can be extended to the following diagram in which each of the two small squares are pull-backs.
\begin{diagram}
\widehat{\N}_d & \rTo^{\widetilde{r}_d} & C\mathcal{G}_d(X,\L) & \rTo & C\mathcal{G}_{2d+\deg(\L)}(X)\\
\dTo && \dTo && \dTo\\
\overline{\Bun}_{B,d}(X) & \rTo^{\overline{r}_d} &
\Bun_{\mathbb{G}_m,d}(X) & \rTo^{\mathrm{Sq}_{\L}} &
\Bun_{\mathbb{G}_m,2d+\deg(\L)}(X)
\end{diagram}

The map $\widetilde{r}_d$ sends a point $(\lambda \subset E,
\varphi)$ to the pair $(\lambda,\overline{\varphi})$ where
$\overline{\varphi}$ is the section of $\lambda^{\otimes 2} \otimes
\L$ corresponding to the induced map
$\overline{\varphi}:\lambda^{-1} \to \lambda \otimes \L$.

By Lemma \ref{smooth lemma}, it follows that $\widetilde{r}_d$ is a smooth map. Therefore questions of smoothness and irreducibility of $\widehat{\N}$ have been reduced to issues of smoothness and irreducibility of $C\mathcal{G}(X,\L)$, and hence of $C\mathcal{G}(X)$. So, to completely resolve $\N$, it suffices to solve the simpler problem of resolving $C\mathcal{G}_d(X)$ for every $d \leq 2g-2$.

\begin{exmp}
There are some special examples where $C\mathcal{G}_d(X)$ is smooth even when $d \leq 2g-2$. The first such example occurs when $g=3$, $d=2$, and $X$ is not hyperelliptic. Then Clifford's Theorem (\cite[Thm. IV.5.4]{Hart}) implies that $W_2^1(X)$ is empty, that $\mathcal{W}^0_2(X)$ is therefore smooth, and that $C\mathcal{G}_2(X)$ is a line bundle over $\mathcal{W}^0_2(X)$. More generally, if $0 \leq d \leq 2g-2$, then $C\mathcal{G}_d(X)$ is smooth if and only if $d < g$ and $\mathcal{W}^0_d(X)$ is smooth. Recalling that $\mathcal{W}^0_d(X)$ is smooth if and only if $\mathcal{W}^1_d$ is empty, the smoothness of $C\mathcal{G}_d(X)$ depends on the existence of a $g^1_d$ on $X$. When $d \geq \frac{1}{2}g + 1$, every curve of genus $g$ has a $g^1_d$, implying that $C\mathcal{G}_d(X)$ can only be smooth when $d > 2g-2$ or $d < \frac{1}{2}g + 1$. When $d < \frac{1}{2}g + 1$, there always exists a curve of genus $g$ which does not possess a $g^1_d$. See \cite{KL}.
\end{exmp}

To resolve $C\mathcal{G}_d(X)$, consider the stack $\widetilde{C\mathcal{G}}_d(X)$, which fits into the following diagram.
\begin{diagram}
\widetilde{C\mathcal{G}}_d(X) & \rInto & C\mathcal{G}_d(X)
\underset{\mathcal{W}^0_d(X)}{\times}\mathcal{G}^0_d(X) & \rTo &
\mathcal{G}^0_d(X) \\
& \rdTo^u & \dTo && \dTo \\
&& C\mathcal{G}_d(X) & \rTo & \mathcal{W}^0_d(X)
\end{diagram}

$\widetilde{C\mathcal{G}}_d(X)$ is defined to be the closed substack of $C\mathcal{G}_d(X) \underset{\mathcal{W}^0_d(X)}{\times}\mathcal{G}^0_d(X)$ consisting of triples $(\lambda,s,\ell)$ such that $s \in \ell$.

\begin{prop}
$\widetilde{C\mathcal{G}}_d(X) \to C\mathcal{G}_d(X)$ is a
resolution of singularities.
\end{prop}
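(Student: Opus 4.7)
The plan is to identify $\widetilde{C\mathcal{G}}_d(X)$ as the total space of the tautological line bundle on $\mathcal{G}^0_d(X)$, which immediately yields smoothness, and then to verify properness and birationality of the projection $u$.

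First, I would establish the key structural observation. The projection $\pi:\widetilde{C\mathcal{G}}_d(X) \to \mathcal{G}^0_d(X)$ defined by $(\lambda,s,\ell) \mapsto (\lambda,\ell)$ has, over any $S$-point $(\lambda,V) \in \mathcal{G}^0_d(X)(S)$ with $V \subset p_{\ast}\lambda$ the given rank-one locally free subsheaf, fiber consisting of sections $s \in H^0(X_S,\lambda) = H^0(S,p_{\ast}\lambda)$ lying in $V$. Tracing through the moduli descriptions, $\widetilde{C\mathcal{G}}_d(X)$ is thus isomorphic to the total space of the tautological subsheaf $V^{\mathrm{univ}}$ on $\mathcal{G}^0_d(X)$. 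Since $V^{\mathrm{univ}}$ is a line bundle and $\mathcal{G}^0_d(X)$ is smooth of dimension $d-1$ by Theorem \ref{grd geometry} (the quotient by the trivial $\mathbb{G}_m$-action drops the dimension of $G^0_d(X)$ by one), the total space $\widetilde{C\mathcal{G}}_d(X)$ is smooth of dimension $d$, matching $\dim C\mathcal{G}_d(X) = d$ from Proposition \ref{cgd}.

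Next, I would verify that $u$ is proper and birational. For properness, the map $G^0_d(X) \to W^0_d(X)$ is proper since its fiber over a geometric point $\lambda$ is the projective space $\mathbb{P}(H^0(X,\lambda))$; hence $\mathcal{G}^0_d(X) \to \mathcal{W}^0_d(X)$ is proper as well. Base-changing along $C\mathcal{G}_d(X) \to \mathcal{W}^0_d(X)$ preserves properness, and $\widetilde{C\mathcal{G}}_d(X)$ is a closed substack of the resulting fiber product, so $u$ is proper. For birationality, let $U \subset C\mathcal{G}_d(X)$ denote the open substack where $s$ is not identically zero; this is dense by the irreducibility statement in Proposition \ref{cgd}. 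Over $U$, the line $\ell$ is forced to equal $\langle s \rangle$, so the assignment $(\lambda,s) \mapsto (\lambda,s,\langle s \rangle)$ provides an inverse to $u$ restricted to $u^{-1}(U)$, exhibiting $u$ as a birational equivalence.

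The main technical point is the identification in the first step: one must verify that the closed condition $s \in \ell$ in the moduli definition of $\widetilde{C\mathcal{G}}_d(X)$ cleanly realizes the total space of the tautological line bundle, rather than producing extra nilpotent structure. Once this is settled, smoothness is immediate and the remaining properties follow from standard base-change and density arguments, so that $u$ emerges as a proper, birational map from a smooth stack, i.e., a resolution of singularities.
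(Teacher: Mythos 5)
Your proof is correct and takes essentially the same structural route as the paper: smoothness via the identification of $\widetilde{C\mathcal{G}}_d(X)$ as a line bundle over the smooth stack $\mathcal{G}^0_d(X)$, properness by base change from $\mathcal{G}^0_d(X) \to \mathcal{W}^0_d(X)$, and birationality over a dense open. The one genuine difference is in the birationality step: you argue once and for all over the complement of the zero section $\{s\neq 0\}$, which is open, nonempty, and dense by irreducibility of $C\mathcal{G}_d(X)$ (Proposition \ref{cgd}), and over which $\ell=\langle s\rangle$ is forced. The paper instead splits into the cases $d<g$ and $g\leq d\leq 2g-2$: in the first case it observes that $u$ is an isomorphism already over the complement of $z(\mathcal{W}^1_d(X))$, a strictly smaller exceptional locus than the whole zero section, and in the second case it falls back to the complement of the zero section. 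Your approach is slightly more economical for proving the proposition as stated; the paper's finer case analysis is there to set up the subsequent remark that for $g<d\leq 2g-2$ the map $u$ fails to be an isomorphism over the entire smooth locus of $C\mathcal{G}_d(X)$ (since smooth points of the form $(\lambda,0)$ with $H^1(X,\lambda)=0$ have fiber $\mathbb{P}H^0(X,\lambda)$), whereas for $d<g$ it is an isomorphism over all smooth points. So your simplification loses nothing here, but the paper's bookkeeping pays off a few lines later.
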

\begin{proof}
The map $u:\widetilde{C\mathcal{G}}_d(X) \to C\mathcal{G}_d(X)$ is
proper because $\mathcal{G}^0_d(X) \to \mathcal{W}^0_d(X)$ is
proper. To show that it is a birational equivalence, consider two
cases.
\begin{enumerate}
\item Suppose that $d < g$. Let $z:\mathcal{W}^0_d(X) \to
C\mathcal{G}_d(X)$ be the zero section, and consider the substack of
$C\mathcal{G}_d(X)$ defined as the image $z(\mathcal{W}^1_d(X))$.
Since $\mathcal{W}^1_d(X)$ is a closed substack of $W^0_d(X)$,
$z(\mathcal{W}^1_d(X))$ is a closed substack of $C\mathcal{G}_d(X)$.
Then $u$ is a birational equivalence because it is an isomorphism on
the complement of $z(\mathcal{W}^1_d(X))$.
\item Suppose that $g \leq d \leq 2g-2$. Then $u$ is a birational
equivalence because it is an isomorphism over the complement of the
image of the zero section $z(\mathcal{W}^0_d(X))$.
\end{enumerate}
$\widetilde{C\mathcal{G}}_d(X)$ is smooth because
$\mathcal{G}^0_d(X)$ is smooth and the projection map
$\widetilde{C\mathcal{G}}_d(X) \to \mathcal{G}^0_d(X)$ is smooth of
relative dimension $1$.
\end{proof}

\begin{rem}
Unfortunately, when $g < d \leq 2g-2$, $\widetilde{C\mathcal{G}}_d(X)$ is a somewhat unsatisfactory resolution of $C\mathcal{G}_d(X)$. This is because it is not an isomorphism over the smooth part of $C\mathcal{G}_d(X)$. Indeed, recall that there are smooth points of $C\mathcal{G}_d(X)$ lying in the image of the zero section, corresponding to those pairs $(\lambda,0)$ such that $H^1(X,\lambda) = 0$. However, $u^{-1}((\lambda,0)) \simeq \mathbb{P}H^0(X,\lambda)$, which is a projective space of dimension at least $d-g$. This does show, however, that $u$ is an isomorphism over the smooth part of $C\mathcal{G}_d(X)$ when $d=g$.
\end{rem}

For any $d \geq -\frac{1}{2}\deg(\L)$ such that $2d + \deg(\L) \leq
2g-2$, set
\begin{equation*}
\widetilde{\N}_d := \widehat{\N}_d \underset{C\mathcal{G}_{2d+\deg(\L)}(X)}{\times} \widetilde{C\mathcal{G}}_{2d+\deg(\L)}(X).
\end{equation*}
In other words, $\widetilde{\N}_d$ parameterizes the data of $(\lambda \subset E, \varphi,\ell)$ such that $(\lambda \subset E,\varphi) \in \widehat{\N}_d$, $\ell \in \mathbb{P}H^0(X,\lambda^{\otimes 2} \otimes \L)$, and $(\overline{\varphi}:\lambda^{-1} \to \lambda \otimes \L) \in \ell$.

Finally, define
\begin{equation*}
\widetilde{\N} := \underset{d=-\frac{1}{2}\deg(\L)}{\overset{g-1-\frac{\deg(\L)}{2}}{\bigcup}}\quad \widetilde{\N}_d \quad \cup \quad \underset{d > g-1-\frac{\deg(\L)}{2}}{\bigcup}\quad \widehat{\N}_d.
\end{equation*}

\begin{thm}
$\widetilde{N}$ is a resolution of singularities of $\N$.
\end{thm}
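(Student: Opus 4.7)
The plan is to verify the three defining properties of a resolution of singularities: smoothness of $\widetilde{\N}$, properness of the projection $\widetilde{\N} \to \N$, and birationality. Each reduces to a component-by-component argument combining the smoothness of $\widetilde{r}_d$ (Lemma~\ref{smooth lemma}), the geometric description of $C\mathcal{G}_d(X)$ from Proposition~\ref{cgd}, the fact that $\widetilde{C\mathcal{G}}_d(X) \to C\mathcal{G}_d(X)$ is a resolution, and the properness/birationality of $\widehat{\N} \to \N$ from Propositions~\ref{proper} and~\ref{birational}.

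For smoothness I would split into two ranges. When $d > g-1-\tfrac{1}{2}\deg(\L)$, the component of $\widetilde{\N}$ is $\widehat{\N}_d$ itself; here $2d+\deg(\L) > 2g-2$, so $C\mathcal{G}_{2d+\deg(\L)}(X)$ is smooth by Proposition~\ref{cgd}, hence so is $C\mathcal{G}_d(X,\L)$ via the \'etale map, and smoothness of $\widehat{\N}_d$ follows from smoothness of $\widetilde{r}_d$. In the complementary range, $\widetilde{\N}_d$ is the fiber product $\widehat{\N}_d \times_{C\mathcal{G}_{2d+\deg(\L)}(X)} \widetilde{C\mathcal{G}}_{2d+\deg(\L)}(X)$; its projection to $\widetilde{C\mathcal{G}}_{2d+\deg(\L)}(X)$ is the base change of the smooth composition of $\widetilde{r}_d$ with the \'etale map, hence smooth, and combined with smoothness of $\widetilde{C\mathcal{G}}_{2d+\deg(\L)}(X)$ (from the preceding proposition) gives smoothness of $\widetilde{\N}_d$. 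For properness, factor $\widetilde{\N} \to \widehat{\N} \to \N$ and use that the second map is proper by Proposition~\ref{proper}; on the nontrivial components $\widetilde{\N}_d \to \widehat{\N}_d$ is the base change of the proper map $\widetilde{C\mathcal{G}}_{2d+\deg(\L)}(X) \to C\mathcal{G}_{2d+\deg(\L)}(X)$, so is itself proper.

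The subtler step is birationality, and I would show that $\widetilde{\N} \to \N$ restricts to an isomorphism over the open substack $\N^{\mathrm{gl,reg}}$ of globally regular nilpotent Higgs bundles. By Proposition~\ref{birational}, $\widehat{\N} \to \N$ is already an isomorphism there, the inverse sending $(E,\varphi)$ to $(\ker(\varphi) \subset E,\varphi)$. For such a pair the induced section $\overline{\varphi} \in H^0(X,\lambda^{\otimes 2}\otimes \L)$ is nonzero, and by Proposition~\ref{cgd} the pair $(\lambda^{\otimes 2}\otimes\L, \overline{\varphi})$ lies in the smooth locus of $C\mathcal{G}_{2d+\deg(\L)}(X)$, over which $\widetilde{C\mathcal{G}}_{2d+\deg(\L)}(X) \to C\mathcal{G}_{2d+\deg(\L)}(X)$ is an isomorphism (the line $\ell$ is forced to be the span of $\overline{\varphi}$). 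Hence $\widetilde{\N}_d \to \widehat{\N}_d$ is an isomorphism over the preimage of this locus. The main obstacle is verifying that this open substack is dense in every component of $\widetilde{\N}$; this follows because $\widehat{\N}_d \to \N_d$ is birational for every $d$, so $\N^{\mathrm{gl,reg}}$ meets each irreducible component of $\N$ in a dense open, and its preimage in $\widetilde{\N}$ is dense by the smoothness (hence flatness) of $\widetilde{r}_d$ and of the base-change projections. Combining smoothness, properness, and this isomorphism over a dense open, $\widetilde{\N} \to \N$ is a resolution of singularities.
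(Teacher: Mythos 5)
The paper states this theorem without a written proof, so there is no explicit argument to compare against; your proposal reconstructs what the surrounding results are clearly intended to yield, and it does so correctly. The decomposition of smoothness into the two ranges of $d$, the factorization of properness through $\widehat{\N}$, and the identification of $\N^{\mathrm{gl,reg}}$ as the locus over which both $\widehat{\N}\to\N$ and $\widetilde{\N}\to\widehat{\N}$ are isomorphisms (via $\overline{\varphi}\neq 0$ landing in the $s\neq 0$ locus of $C\mathcal{G}$, where $\widetilde{C\mathcal{G}}\to C\mathcal{G}$ collapses to an isomorphism) are exactly the ingredients the paper sets up in Lemma~\ref{smooth lemma}, Proposition~\ref{cgd}, the resolution $\widetilde{C\mathcal{G}}_d(X)\to C\mathcal{G}_d(X)$, and Propositions~\ref{proper} and~\ref{birational}.

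The one place your write-up is soft is the last step — density of $\mu^{-1}(\N^{\mathrm{gl,reg}})$ in $\widetilde{\N}_d$. The flatness of $\widetilde{r}_d$ and its base changes does not by itself force the preimage under $\mu$ (which is not flat, being a proper modification) of a dense open in $\N_d$ to be dense in $\widetilde{\N}_d$; what it does give, via going-down along the smooth map $\widetilde{\N}_d\to\widetilde{C\mathcal{G}}_{2d+\deg(\L)}(X)$, is that every irreducible component of $\widetilde{\N}_d$ dominates the irreducible target. The cleanest route is to observe that $\widetilde{\N}_d$ is connected (it is proper and surjective over the connected $\widehat{\N}_d$ with connected fibers — projective spaces or points — coming from $\widetilde{C\mathcal{G}}\to C\mathcal{G}$) and smooth, hence irreducible; then the nonempty open $\mu^{-1}(\N^{\mathrm{gl,reg}}\cap\N_d)$ is automatically dense, and birationality follows. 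This is the same kind of implicit leap the paper makes in Proposition~\ref{birational}, so your argument is consistent with the paper's level of rigor; I am just flagging that the flatness phrasing does not quite carry the load you are asking it to, whereas irreducibility of $\widetilde{\N}_d$ does.
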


$\widetilde{\N}$ will be referred to as a \em global Springer
resolution \em of $\N$.

\begin{cor}
Each connected component of $\widetilde{\N}$ has dimension $\deg(\L) + g - 1$. $\N$ is therefore equidimensional of dimension $\deg(\L) + g - 1$.
\end{cor}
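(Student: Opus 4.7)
The plan is to compute the dimension of $\widetilde{\N}$ one connected component at a time and then transfer the result to $\N$ via the resolution $\widetilde{\N} \to \N$. I would split into the two ranges appearing in the definition of $\widetilde{\N}$: the upper range $d > g - 1 - \tfrac{1}{2}\deg(\L)$, where $C\mathcal{G}_{2d+\deg(\L)}(X)$ is already smooth and $\widetilde{\N}_d = \widehat{\N}_d$; and the lower range $-\tfrac{1}{2}\deg(\L) \leq d \leq g - 1 - \tfrac{1}{2}\deg(\L)$, where $\widetilde{\N}_d = \widehat{\N}_d \times_{C\mathcal{G}_{2d+\deg(\L)}(X)} \widetilde{C\mathcal{G}}_{2d+\deg(\L)}(X)$. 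In both cases, the central tool is the smooth map $\widetilde{r}_d : \widehat{\N}_d \to C\mathcal{G}_d(X,\L)$ constructed in Section \ref{ss:higher genus}.

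First I would compute $\dim \widehat{\N}_d$ directly. The dimension of $C\mathcal{G}_d(X,\L)$ is $2d + \deg(\L)$ by the corollary following Proposition \ref{cgd}. Since the square defining $\widetilde{r}_d$ is a pullback with bottom map $\overline{r}_d$, and $\overline{r}_d$ is smooth by Lemma \ref{smooth lemma}, the relative dimension of $\widetilde{r}_d$ coincides with that of $\overline{r}_d : \overline{\Bun}_{B,d}(X) \to \Bun_{\mathbb{G}_m,d}(X)$. Using $\dim \overline{\Bun}_{B,d}(X) = -2d + 2(g-1)$ (Proposition \ref{dimension B} with $\dim(B) = 2$ for $SL_2$) and $\dim \Bun_{\mathbb{G}_m,d}(X) = g-1$, this relative dimension is $-2d + g - 1$. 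Adding gives $\dim \widehat{\N}_d = \deg(\L) + g - 1$, which settles the upper range at once.

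For the lower range, I would argue that $\widetilde{\N}_d \to \widehat{\N}_d$ is itself proper and birational, and hence dimension-preserving. This map is the pullback of the proper birational resolution $\widetilde{C\mathcal{G}}_{2d+\deg(\L)}(X) \to C\mathcal{G}_{2d+\deg(\L)}(X)$ along the smooth composition of $\widetilde{r}_d$ with the étale map $C\mathcal{G}_d(X,\L) \to C\mathcal{G}_{2d+\deg(\L)}(X)$. Properness is stable under arbitrary base change, and birationality is preserved under smooth surjective base change, so both properties descend to $\widetilde{\N}_d \to \widehat{\N}_d$, yielding $\dim \widetilde{\N}_d = \deg(\L) + g - 1$.

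The main obstacle is the boundary stratum at $d = -\tfrac{1}{2}\deg(\L)$, where condition (3) of Definition \ref{global springer} forces $\lambda^{\otimes 2} \otimes \L \simeq \O_X$ and picks out the $2^{2g}$ components indexed by square roots of $\L^{-1}$. Here $\widetilde{r}_d$ factors through the single stacky point $\O_X \in \mathcal{W}^0_0(X)$, so the smooth-base-change argument must be replaced by the explicit description of $\overline{\Bun}_{B,d}^0(X)$ already used in Section \ref{ss:genus 1}. I would check that $\widetilde{C\mathcal{G}}_0(X) \to C\mathcal{G}_0(X)$ is in fact an isomorphism (because $\mathcal{W}^1_0(X)$ is empty for $g \geq 2$), so that $\widetilde{\N}_d = \widehat{\N}_d$ at this boundary, and then add $\dim \overline{\Bun}_{B,d}^0(X) = \deg(\L) + g - 2$ to the one-dimensional fiber $H^0(X,\O_X) = \C$ to recover $\deg(\L) + g - 1$ on each of the $2^{2g}$ components. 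Once every component of $\widetilde{\N}$ has this common dimension, the equidimensionality of $\N$ follows formally from Propositions \ref{proper} and \ref{birational}: $\widetilde{\N} \to \N$ is proper, surjective, and birational, so the irreducible components of $\N$ are the images of the connected components of the smooth stack $\widetilde{\N}$, each of dimension $\deg(\L) + g - 1$.
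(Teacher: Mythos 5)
Your proof is correct and follows essentially the same approach as the paper's: both compute $\dim \widetilde{\N}_d$ as the sum of $\dim C\mathcal{G}_d(X,\L) = 2d + \deg(\L)$ and the relative dimension $-2d + g - 1$ of $\overline{r}_d$ (you obtain this by subtracting $\dim\Bun_{\mathbb{G}_m,d}(X)$ from $\dim\overline{\Bun}_{B,d}(X)$, whereas the paper computes $-\chi(X,\lambda^{\otimes 2})$ from the description of $r_d^{-1}(\lambda)$ as $\Ext^1(\lambda^{-1},\lambda)/\Hom(\lambda^{-1},\lambda)$, but these are the same number). Your explicit case division into the upper range, the lower range via properness and birationality of $\widetilde{\N}_d \to \widehat{\N}_d$, and the boundary component $d = -\frac{1}{2}\deg(\L)$ supplies detail that the paper's terse proof leaves implicit, but does not change the underlying argument.
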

\begin{proof}
To compute the dimension of the connected component
$\widetilde{\N}_d$, it suffices to compute the relative dimension of
\begin{equation*}
r_d:Bun_{B,d}(X) \to Bun_{\mathbb{G}_m,d}(X).
\end{equation*}
Since the fiber $r_d^{-1}(\lambda)$ is the quotient stack
\begin{equation*}
\Ext^1(\lambda^{-1},\lambda) \big/ \Hom(\lambda^{-1},\lambda),
\end{equation*}
the relative dimension of $r_d$ is given by the negative Euler
characteristic
\begin{equation*}
-\chi(X,\lambda^{\otimes 2}) = -2d + (g-1).
\end{equation*}
Therefore,
\begin{equation*}
\dim(\widetilde{\N}_d) = \dim(C\mathcal{G}_d(X,\L)) + \dim(r_d) =
\deg(\L) + g - 1.
\end{equation*}
\end{proof}

\subsection{Twisting bundles of smaller degree and stable bundles}\label{ss:lower degree}

Recall the restrictions placed on the twisting bundle $\L$ in the previous sections. Besides requiring the degree of $\L$ to be even, there was also the restriction that
\begin{equation*}
\deg(\L) \geq 2g.
\end{equation*}
The purpose of this lower bound on the degree of $\L$ is to ensure that for every $SL_2$-bundle $E$, there exists a nonzero nilpotent twisted endomorphism $\varphi:E \to E \otimes \L$. In other words, the lower bound on $\deg(\L)$ makes it so that the image of the zero section does not form its own irreducible component of $\N$. To see how the particular lower bound on $\deg(\L)$ was chosen, consider the following simple lemmas.

\begin{lem}\label{nonzero nilpotent}
Given an $SL_2$-bundle $E$, there exists a nonzero nilpotent twisted
endomorphism $\varphi:E \to E \otimes \L$ if and only if $E$
possesses a line subbundle $\lambda$ such that
$h^0(X,\lambda^{\otimes 2} \otimes \L) \geq 1$.
\end{lem}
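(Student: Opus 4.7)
The plan is to prove both directions by directly constructing the desired object, exploiting the fact that $\det(E) \simeq \O_X$ for an $SL_2$-bundle.

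For the forward direction, start with a nonzero nilpotent $\varphi:E\to E\otimes\L$. Since $G=SL_2$, nilpotency means $\varphi^2 = (\varphi\otimes\mathrm{id}_{\L})\circ\varphi = 0$, which immediately gives the inclusion $\im(\varphi)\subset\ker(\varphi)\otimes\L$. The first key observation is that $\ker(\varphi)$ is automatically a line subbundle of $E$: it is a rank-one coherent subsheaf because $\varphi\neq 0$, and it is saturated because $E\otimes\L$ is torsion-free (if $gf$ lies in $\ker(\varphi)$ for a function $g\not\equiv 0$, then $g\varphi(f)=0$ forces $\varphi(f)=0$). Set $\lambda := \ker(\varphi)$. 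Then $\varphi$ descends to an injection $E/\lambda\hookrightarrow E\otimes\L$, and the identification $E/\lambda\simeq\lambda^{-1}$ coming from $\det(E)\simeq\O_X$ together with $\im(\varphi)\subset\lambda\otimes\L$ gives a nonzero map $\lambda^{-1}\to\lambda\otimes\L$, i.e.\ a nonzero global section of $\lambda^{\otimes 2}\otimes\L$.

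For the converse, suppose $\lambda\subset E$ is a line subbundle and $s\in H^0(X,\lambda^{\otimes 2}\otimes\L)$ is nonzero, corresponding to a nonzero map $\widetilde{s}:\lambda^{-1}\to\lambda\otimes\L$. Define $\varphi$ as the composition
\begin{equation*}
\varphi : E \twoheadrightarrow E/\lambda \simeq \lambda^{-1} \xrightarrow{\widetilde{s}} \lambda\otimes\L \hookrightarrow E\otimes\L,
\end{equation*}
using $\det(E)\simeq\O_X$ for the middle isomorphism. By construction $\lambda\subset\ker(\varphi)$ and $\im(\varphi)\subset\lambda\otimes\L$, so the composite $(\varphi\otimes\mathrm{id}_{\L})\circ\varphi$ factors through the zero map $\lambda\otimes\L\to(E/\lambda)\otimes\L\otimes\L$, hence $\varphi^{2}=0$. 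Nonvanishing of $s$ ensures $\varphi\neq 0$.

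I do not expect any genuine obstacle here; the proof is essentially a direct unwinding of definitions, with the only mildly subtle point being the verification that $\ker(\varphi)$ is already saturated (so that calling it a line \emph{sub}bundle is justified) and the clean use of $\det(E)\simeq\O_X$ to identify $E/\lambda$ with $\lambda^{-1}$ on both sides of the equivalence.
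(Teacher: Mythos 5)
Your proof is correct and takes essentially the same route as the paper: the forward direction sets $\lambda := \ker(\varphi)$ and uses the induced map $\lambda^{-1}\simeq E/\lambda\to\lambda\otimes\L$ as the nonzero section, while the converse builds $\varphi$ from the composite $E\twoheadrightarrow E/\lambda\simeq\lambda^{-1}\xrightarrow{s}\lambda\otimes\L\hookrightarrow E\otimes\L$. You simply fill in a few details (saturation of $\ker(\varphi)$, the explicit check that $\varphi^2=0$) that the paper leaves to the reader.
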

\begin{proof}
If there exists a nonzero nilpotent twisted endomorphism $\varphi$ of $E$, then we may take $\lambda = \ker(\varphi)$.

Conversely, suppose that $\lambda \subset E$ is a line bundle such that $h^0(X,\lambda^{\otimes 2} \otimes \L) \geq 1$. Then pick a nonzero section $s \in H^0(X,\lambda^{\otimes 2} \otimes \L)$, and consider $\varphi:E \to E \otimes \L$ defined as the composition
\begin{equation*}
E \to E/\lambda \simeq \lambda^{-1} \overset{s}{\to} \lambda \otimes
\L \hookrightarrow E \otimes \L.
\end{equation*}
Then $\varphi$ is a nonzero nilpotent twisted endomorphism of $E$.
\end{proof}

\begin{lem}\label{subsheaf}
If $\lambda$ is a line bundle on $X$ such that
\begin{equation*}
\deg(\lambda) \leq -g,
\end{equation*}
then, for all $SL_2$-bundles $E$ on $X$, $\lambda$ is a subsheaf of
$E$.
\end{lem}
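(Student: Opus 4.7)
The plan is to translate the existence of a subsheaf embedding $\lambda \hookrightarrow E$ into the non-vanishing of a global section of a suitable rank $2$ bundle, and then apply Riemann--Roch.

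First I would observe that giving a subsheaf embedding $\lambda \hookrightarrow E$ is the same as producing a nonzero morphism of $\mathcal{O}_X$-modules $\lambda \to E$, equivalently a nonzero element of
\[
\Hom(\lambda, E) = H^0\bigl(X, E \otimes \lambda^{-1}\bigr).
\]
Indeed, any nonzero map from a line bundle on a smooth curve into a torsion-free sheaf is automatically injective, since its kernel is a torsion subsheaf of $\lambda$ and $\lambda$ is locally free.

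Next I would apply Riemann--Roch to the rank $2$ bundle $E \otimes \lambda^{-1}$. Since $E$ is an $SL_2$-bundle we have $\deg(E) = 0$, hence
\[
\chi\bigl(X, E \otimes \lambda^{-1}\bigr) = -2\deg(\lambda) + 2(1 - g).
\]
Under the hypothesis $\deg(\lambda) \leq -g$ we get $-2\deg(\lambda) \geq 2g$, and therefore $\chi(X, E \otimes \lambda^{-1}) \geq 2$. Since $h^1 \geq 0$, this forces $h^0(X, E \otimes \lambda^{-1}) \geq 2 > 0$, producing the desired nonzero morphism $\lambda \to E$.

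There is essentially no obstacle here: the argument is a one-line Riemann--Roch estimate combined with the elementary fact that nonzero maps out of line bundles on smooth curves are injective. The only point worth flagging is that the hypothesis $\deg(\lambda) \leq -g$ is sharp for this Riemann--Roch bound, which retroactively explains why the degree cutoff $\deg(\mathcal{L}) \geq 2g$ appears in Lemma \ref{nonzero nilpotent}: the natural choice $\lambda = L$ with $L^{\otimes 2} = \mathcal{L}^{-1}$ has $\deg(\lambda) = -\tfrac{1}{2}\deg(\mathcal{L}) \leq -g$ precisely when $\deg(\mathcal{L}) \geq 2g$.
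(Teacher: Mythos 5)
Your proof is correct and takes precisely the same route as the paper: reduce to showing $H^0(X, E \otimes \lambda^{-1}) \neq 0$ and apply Riemann--Roch to the rank $2$, degree $-2\deg(\lambda)$ bundle $E \otimes \lambda^{-1}$, using $\deg(E)=0$. The extra observations you include (injectivity of nonzero maps out of a line bundle into a torsion-free sheaf, and the sharpness of the $\deg(\lambda) \leq -g$ threshold) are sound but not needed beyond what the paper records.
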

\begin{proof}
It is equivalent to show that $H^0(X,E \otimes \lambda^{-1}) \neq
0$. By Riemann-Roch,
\begin{equation*}
\chi(E \otimes \lambda^{-1}) = 2(-\deg(\lambda)+1-g) > 0.
\end{equation*}
\end{proof}

The next proposition justifies the assumption that $\deg(\L) \geq 2g$.

\begin{prop}
If $\deg(\L) \geq 2g$, then any $SL_2$-bundle $E$ possesses a
nonzero nilpotent twisted endomorphism $\varphi:E \to E \otimes \L$.
\end{prop}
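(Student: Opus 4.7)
By Lemma \ref{nonzero nilpotent}, it suffices to produce, for every $SL_2$-bundle $E$, a line subbundle $\mu \subset E$ with $h^0(X,\mu^{\otimes 2}\otimes \L) \geq 1$. The plan is to first arrange a line \emph{subsheaf} $\lambda \subset E$ via Lemma \ref{subsheaf} whose square-twist $\lambda^{\otimes 2}\otimes \L$ already has a nonzero section, and then pass to the saturation of $\lambda$ inside $E$ to upgrade the subsheaf to a subbundle without losing the section.

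First I would choose the target line bundle: since $\deg(\L) - 2g \geq 0$, pick any effective divisor $D$ on $X$ of degree $\deg(\L)-2g$ and set $\nu := \O_X(D)$, so that $h^0(X,\nu)\geq 1$. Since $\Pic^0(X)$ is a complex abelian variety and hence divisible, the squaring map $\Pic^{-g}(X) \to \Pic^{-2g}(X)$ is surjective, so there exists $\lambda \in \Pic^{-g}(X)$ with $\lambda^{\otimes 2}\otimes \L \simeq \nu$. Next, because $\deg(\lambda) = -g$, Lemma \ref{subsheaf} provides an injection $\lambda \hookrightarrow E$. Let $\mu \subset E$ denote the saturation of $\lambda$, i.e.\ the unique line subbundle of $E$ whose quotient $E/\mu$ is torsion-free and which contains the image of $\lambda$. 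Then $\lambda \subset \mu$ induces $\lambda^{\otimes 2}\otimes \L \subset \mu^{\otimes 2}\otimes \L$, so taking global sections gives
\begin{equation*}
h^0(X,\mu^{\otimes 2}\otimes \L) \;\geq\; h^0(X,\lambda^{\otimes 2}\otimes \L) \;=\; h^0(X,\nu) \;\geq\; 1,
\end{equation*}
and Lemma \ref{nonzero nilpotent} finishes the argument.

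The main subtlety is that a direct numerical bound will not suffice at the threshold $\deg(\L) = 2g$: a generic line bundle $\lambda$ of degree $-g$ has $\deg(\lambda^{\otimes 2}\otimes \L) = \deg(\L) - 2g$, which a priori can be as low as $0$, and Riemann-Roch alone would only guarantee a section once $\deg(\L) \geq 3g$. The key point that buys back the missing $g$ degrees is the freedom to \emph{prescribe} the square $\lambda^{\otimes 2}$ using divisibility of $\Pic^0(X)$, so that $\lambda^{\otimes 2}\otimes \L$ is forced to be effective even though its degree is small, while saturation makes the embedding into $E$ cost nothing in the section count.
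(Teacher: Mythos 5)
Your proof is correct and follows essentially the same route as the paper: choose a line subsheaf $\lambda$ via Lemma \ref{subsheaf} with $\lambda^{\otimes 2}\otimes\L$ \emph{forced} to be effective by prescribing $\lambda$ in advance (the paper takes $\lambda$ to be an inverse square root of $\L$, so $\lambda^{\otimes 2}\otimes\L \simeq \O_X$, while you take $\deg\lambda=-g$ with $\lambda^{\otimes 2}\otimes\L\simeq\O_X(D)$ for $D$ effective), then pass to the saturation to get a subbundle and conclude with Lemma \ref{nonzero nilpotent}. The two choices of $\lambda$ are interchangeable here; the paper's is slightly more economical given the standing assumption that $\deg(\L)$ is even.
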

\begin{proof}
Let $\lambda$ be the inverse of a square root of $\L$. By Lemma
\ref{subsheaf}, $\lambda$ is a subsheaf of $E$. If $\lambda$ is
furthermore a subbundle of $E$, then $h^0(X,\lambda^{\otimes
2}\otimes \L) = 1$, in which case Lemma \ref{nonzero nilpotent}
finishes the proof.

Otherwise, the normalization $\widetilde{\lambda} \supset \lambda$
is a subbundle of $E$. Then $\lambda^{\otimes 2}\otimes \L$ is a
subsheaf of $\widetilde{\lambda}^{\otimes 2} \otimes \L$, which
implies that
\begin{equation*}
h^0(\widetilde{\lambda}^{\otimes 2}\otimes \L) \geq
h^0(\lambda^{\otimes 2}\otimes \L) = 1.
\end{equation*}
Then Lemma \ref{nonzero nilpotent} finishes the proof in this case
as well.
\end{proof}

Now suppose that $\deg(\L) \leq 2g-2$, while keeping the assumption
that $\deg(\L)$ is even. Let $Z$ denote the image of the zero
section
\begin{equation*}
z:\Bun_{SL_2}(X) \to \N.
\end{equation*}
$Z$ is an irreducible component of $\N$, which is smooth because $\Bun_{SL_2}(X)$ is smooth. Therefore, to resolve $\N$ it suffices to resolve the irreducible components apart from $Z$. However, it is clear that $\widetilde{\N}$ (resp., $\widehat{\N}$ when $g \leq 1$) still serves this purpose, regardless of the degree of $\L$.
\begin{thm}
If $\L$ is an even degree line bundle such that $\deg(\L) \leq
2g-2$, then the disjoint union $\widetilde{\N} \sqcup Z$ (resp.,
$\widehat{\N} \sqcup Z$) is a resolution of singularities of $\N$.
\end{thm}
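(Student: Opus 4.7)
The plan is to observe that the construction of $\widetilde{\N}$ (resp.\ $\widehat{\N}$ when $g \leq 1$) together with the proofs of all its basic properties go through unchanged for any even-degree line bundle $\L$, and then to identify the image of the projection $\mu$ as precisely the union of the irreducible components of $\N$ other than $Z$.

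First, I would check that Definition \ref{global springer} and the constructions of Sections \ref{ss:construction} and \ref{ss:higher genus} extend verbatim to arbitrary even-degree $\L$: condition (3) of Definition \ref{global springer} merely constrains $\deg(\lambda) \geq -\deg(\L)/2$, which is meaningful regardless of the sign of $\deg(\L)$. Neither Proposition \ref{proper} (properness of $\mu$) nor Proposition \ref{birational} (birationality onto the globally regular locus) uses the bound $\deg(\L) \geq 2g$, and the smoothness results of Propositions \ref{smooth 0}, \ref{smooth 1}, Lemma \ref{smooth lemma}, together with the resolution of $C\mathcal{G}(X,\L)$ carried out in Section \ref{ss:higher genus}, all go through unchanged. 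Hence $\widetilde{\N}$ (resp.\ $\widehat{\N}$) remains a smooth stack, and $\mu$ is a proper morphism which restricts to an isomorphism over $\N^{\mathrm{gl,reg}}$.

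Next I would pin down the image of $\mu$. Because $\mu$ restricts to an isomorphism over the open substack $\N^{\mathrm{gl,reg}}$, which is open and dense in $\N \setminus Z$, and because $\mu$ is proper, its image coincides with the closure $\overline{\N \setminus Z}$. As recorded in the paragraph preceding the theorem, $Z$ is an irreducible component of $\N$ (distinct from all others because, by Lemma \ref{nonzero nilpotent} and the paragraph following Lemma \ref{subsheaf}, a generic $SL_2$-bundle admits no nonzero nilpotent twisted endomorphism once $\deg(\L) \leq 2g-2$). Therefore $\overline{\N \setminus Z}$ is exactly the union of the remaining irreducible components of $\N$, and the connected components of $\widetilde{\N}$ (resp.\ $\widehat{\N}$) map birationally onto them.

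Assembling, the smooth stack $\widetilde{\N} \sqcup Z$ maps properly to $\N$, the restriction to the dense open substack $\N^{\mathrm{gl,reg}} \sqcup \bigl( Z \setminus \overline{\N \setminus Z} \bigr)$ is an isomorphism, and each irreducible component of the source maps birationally onto an irreducible component of $\N$. This is precisely what it means for $\widetilde{\N} \sqcup Z \to \N$ to be a resolution of singularities. The main point to verify is thus the first one: namely, that every argument from Sections \ref{ss:genus 0}--\ref{ss:higher genus} is actually independent of the inequality $\deg(\L) \geq 2g$, this bound having been used only to guarantee that every $SL_2$-bundle admits a nonzero nilpotent twisted endomorphism. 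Once that verification is in place, the theorem reduces to combining the previously established resolution of $\overline{\N \setminus Z}$ with the identity on the smooth component $Z$.
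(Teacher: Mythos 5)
Your argument follows essentially the same path the paper takes: the paper's discussion just before the theorem asserts (without detailed proof) that $Z$ is a smooth irreducible component of $\N$ and that $\widetilde{\N}$ (resp.\ $\widehat{\N}$) still resolves the remaining components regardless of $\deg(\L)$, which is exactly what you verify by noting that the constructions and results of Sections~\ref{ss:construction}--\ref{ss:higher genus} nowhere use the bound $\deg(\L) \ge 2g$. Your elaboration of why $\im(\mu) = \overline{\N \setminus Z}$ and why the restriction over a dense open substack is an isomorphism spells out details the paper leaves implicit, but it is the same approach.
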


Having now discussed $\N$ for any even degree line bundle $\L$, let us briefly turn our attention to stable Higgs bundles. We assume that $g \geq 2$ for the remainder of the section, as there are no stable $SL_2$-bundles when $g < 2$. The following definition first appeared in \cite{Hi}.

\begin{dfn}
For any vector bundle $E$, the \em slope \em of $E$ is defined to be
\begin{equation*}
\mu(E) := \frac{\deg(E)}{\rk(E)}.
\end{equation*}
A Higgs bundle $(E,\varphi)$ is said to be \em stable \em if, for any vector bundle $F$ which is a $\varphi$-invariant subsheaf of $E$,
\begin{equation*}
\mu(F) < \mu(E).
\end{equation*}
\end{dfn}

Let $\N^s \subset \N$ denote the substack of stable nilpotent Higgs bundles. Note that if $(E,\varphi)$ is a stable nilpotent Higgs bundle, then $F \subset E$ is a $\varphi$-invariant subsheaf if and only if $F \subset \ker(\varphi)$. Let $\widetilde{\N}{}^s_d$ denote the restriction of $\widetilde{\N}_d$ to $\N_d^s$, and set
\begin{equation*}
\widetilde{\N}{}^s =
\overset{-\frac{1}{2}\deg(\L)}{\underset{d=-1}{\bigcup}}\quad
\widetilde{\N}{}^s_d.
\end{equation*}

\begin{prop}\label{stable}
First suppose that $\deg(\L) \geq 2g$. Then $\widetilde{\N}{}^s$ is
a resolution of singularities of $\N^s$. If $0< \deg(\L) \leq 2g-2$,
then $\widetilde{\N}{}^s \sqcup Z$ is a resolution of singularities
of $\N^s$. Lastly, if $\deg(\L) \leq 0$, then $\N^s = Z$ is smooth.
\end{prop}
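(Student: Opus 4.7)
The plan is to reduce each part to the previously established resolution results by determining which components of $\N$ meet the stable locus $\N^s$. The key translation comes from stability itself: in the $SL_2$ setting, any nonzero nilpotent Higgs field $\varphi$ satisfies $\varphi^2 = 0$, so $\ker(\varphi)$ is a rank-one subsheaf whose saturation $\lambda := \widetilde{\ker(\varphi)}$ is the unique $\varphi$-invariant line subbundle of $E$. Since every $\varphi$-invariant subsheaf is contained in $\ker(\varphi)$ (as noted in the text), the pair $(E,\varphi)$ with $\varphi \neq 0$ is stable precisely when $\deg(\lambda) \leq -1$, while $(E,0)$ is stable precisely when $E$ is stable as an $SL_2$-bundle.

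For Part 1, I would first invoke the standard fact that stability is an open condition, so $\N^s$ is open in $\N$. On the globally regular locus $\N^{\mathrm{gl,reg}}$ the inverse map $\nu$ of Proposition \ref{birational} sends $(E,\varphi)$ to $(\ker(\varphi) \subset E, \varphi)$, identifying the degree of $\ker(\varphi)$ with the index $d$ of the component of $\widehat{\N}$ containing the image. Combined with the constraint $d \geq -\tfrac{1}{2}\deg(\L)$ from Definition \ref{global springer}, this shows that $\N^s$ meets $\N_d$ exactly for $-\tfrac{1}{2}\deg(\L) \leq d \leq -1$, which is precisely the range appearing in the definition of $\widetilde{\N}{}^s$. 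The preimage $\widetilde{\N}{}^s = \mu^{-1}(\N^s)$ is then an open substack of the smooth stack $\widetilde{\N}$, hence smooth. Restricting the proper birational morphism $\widetilde{\N} \to \N$ to the open substack $\N^s$ preserves properness, birationality, and surjectivity, yielding the required resolution.

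For Part 2, I would repeat the analysis of Part 1 for the components with $\varphi \neq 0$, producing a resolution of $\N^s \setminus Z$ by $\widetilde{\N}{}^s$. When $\deg(\L) \leq 2g-2$, the zero section $Z$ forms an additional irreducible component of $\N$ (as in the preceding theorem), and $Z \cap \N^s$ is the open substack $\{(E,0) : E \in \Bun_{SL_2}^s(X)\}$, which is smooth since $\Bun_{SL_2}^s(X)$ is a smooth open substack of $\Bun_{SL_2}(X)$ and the zero section is a closed embedding. Taking the disjoint union with this component then gives the claimed resolution.

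For Part 3, suppose for contradiction that $(E,\varphi) \in \N^s$ with $\varphi \neq 0$, and set $\lambda := \widetilde{\ker(\varphi)}$. Stability gives $\deg(\lambda) \leq -1$, while the induced nonzero map $\overline{\varphi}\colon \lambda^{-1} \to \lambda \otimes \L$ provides a nonzero global section of $\lambda^{\otimes 2} \otimes \L$, forcing $\deg(\L) \geq -2\deg(\lambda) \geq 2 > 0$, contradicting $\deg(\L) \leq 0$. Hence $\N^s$ consists entirely of pairs $(E,0)$ with $E$ a stable $SL_2$-bundle, and is smooth as an open substack of $\Bun_{SL_2}(X)$. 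The main technical point in all three parts is checking that the resolution properties restrict cleanly to open substacks and that stability is indeed an open condition; both are standard facts, but they require careful bookkeeping to separate the components arising from nonzero Higgs fields from the zero-section component $Z$ that appears when $\deg(\L) < 2g$.
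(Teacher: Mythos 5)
The paper states Proposition~\ref{stable} without giving a proof, so there is no argument in the source to compare against; I can only assess your proposal on its own terms, and it is correct.

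Your overall strategy --- observe that $\widetilde{\N}{}^s = \mu^{-1}(\N^s)$ (respectively that $\mu^{-1}(\N^s)$, viewed inside $\widetilde{\N}\sqcup Z$, equals $\widetilde{\N}{}^s\sqcup (Z\cap\N^s)$ when $Z$ is a separate component), and then restrict the already-established resolution to the open substack $\N^s$ --- is the natural one and it works. The two facts that make the reduction go through are exactly the ones you isolate: any $\varphi$-invariant proper subsheaf of a stable $(E,\varphi)$ with $\varphi\neq 0$ has degree $\leq -1$, which forces $d\leq -1$; and $\overline{\varphi}$ gives a nonzero section of $\lambda^{\otimes 2}\otimes\L$, which forces $d\geq -\tfrac{1}{2}\deg(\L)$ and gives the contradiction $\deg(\L)>0$ in Part~3. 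You also correctly address the needed surjectivity: for $d\geq 0$ no point of $\widehat{\N}_d$ lies over a stable Higgs bundle (whether $\varphi=0$ or not, the line subsheaf $\lambda\subset E$ must have negative degree), so the truncated union in the definition of $\widetilde{\N}{}^s$ is the full preimage of $\N^s$. A couple of cosmetic remarks. First, for $G=SL_2$ and $\varphi\neq 0$ nilpotent, $E/\ker(\varphi)\simeq\im(\varphi)$ embeds into the locally free $E\otimes\L$ and is therefore torsion-free, so $\ker(\varphi)$ is already saturated and $\widetilde{\ker(\varphi)}=\ker(\varphi)$; the normalization step is vacuous. Second, in Part~2 the phrase ``a resolution of $\N^s\setminus Z$'' is slightly informal, since $\N^s\setminus Z$ is not closed; the crisp statement is that $\widetilde{\N}\sqcup Z\to\N$ is a resolution and its base change over the open $\N^s\hookrightarrow\N$ is $\widetilde{\N}{}^s\sqcup (Z\cap\N^s)\to\N^s$, which is therefore also a resolution (preimage of an open substack under a resolution is a resolution, using that an open subset of each irreducible component meets the dense locus over which the map is an isomorphism). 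Neither issue affects the validity of the argument.
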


In particular, Proposition \ref{stable} provides a count of the finite number of irreducible components of $\N^s$.
\begin{cor}
\begin{enumerate}
\item If $\deg(\L) \geq 2g$, then $\N^s$ has
$\frac{1}{2}\deg(\L)$ irreducible components.
\item If $0< \deg(\L) \leq 2g-2$, then $\N^s$ has
$\frac{1}{2}\deg(\L)+1$ irreducible components.
\item If $\deg(\L) \leq 0$, then $\N^s$ has $1$ irreducible component.
\end{enumerate}
\end{cor}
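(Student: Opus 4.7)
The strategy is to deduce the count directly from Proposition \ref{stable} by enumerating connected components of the resolution in each case, using the elementary principle that a resolution of singularities $\mu: Y \to X$ with $Y$ smooth induces a bijection between the connected components of $Y$ and the irreducible components of $X$: each connected component of $Y$ is irreducible by smoothness and maps surjectively onto an irreducible component of $X$ under a proper birational map.

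For Case 3, Proposition \ref{stable} identifies $\N^s$ with $Z$, and $Z \cap \N^s$ is the image under the zero section of the open substack $\Bun_{SL_2}^s(X) \subset \Bun_{SL_2}(X)$. Since $\Bun_{SL_2}(X)$ is irreducible, so is its open substack, yielding a single irreducible component of $\N^s$.

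For Cases 1 and 2 the resolution contains $\widetilde{\N}{}^s = \bigcup_{d=-\frac{1}{2}\deg(\L)}^{-1}\widetilde{\N}{}^s_d$. The plan is to show that each summand $\widetilde{\N}{}^s_d$ is nonempty and connected. Connectedness will follow from the irreducibility of $\widetilde{\N}_d$ established in Section \ref{ss:higher genus}, since $\widetilde{\N}{}^s_d$ is an open substack. Nonemptiness will be verified by explicit construction: for each $d$ in the range, take a line bundle $\lambda$ of degree $d$ together with a sufficiently generic non-split extension $0 \to \lambda \to E \to \lambda^{-1} \to 0$, equipped with the Higgs field $\varphi$ induced by a nonzero section of $\lambda^{\otimes 2} \otimes \L$; the argument (already used in the proof of Proposition \ref{birational}) that the only $\varphi$-invariant line subbundle of $E$ is $\ker(\varphi) = \lambda$, of degree $d<0$, then establishes stability. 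With $\frac{1}{2}\deg(\L)$ integer values of $d$ in the range, this yields $\frac{1}{2}\deg(\L)$ connected components of $\widetilde{\N}{}^s$. In Case 2 we additionally add the single component coming from $Z$, handled exactly as in Case 3, giving a total of $\frac{1}{2}\deg(\L)+1$.

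The main obstacle lies at the boundary $d = -\frac{1}{2}\deg(\L)$: the stack $\widehat{\N}_d$ a priori decomposes into $2^{2g}$ pieces indexed by square roots of $\L^{-1}$, so one must verify that these contribute only a single connected component to $\widetilde{\N}{}^s_d$, rather than $2^{2g}$. Carrying this out will require a careful analysis of the stable strata across the square-root components, either by showing that only a single canonical square root carries stable Higgs bundles of the expected dimension, or by exhibiting a flat degeneration within $\widetilde{\N}{}^s$ that identifies the various square-root pieces into a single connected stratum; this is the step where the general structural argument must be supplemented by input from the particular geometry of $\overline{\Bun}_B(X)$ along the zero section of $\overline{r}$.
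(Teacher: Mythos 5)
Your framework is right: count connected components of the resolution $\widetilde{\N}{}^s$ (each irreducible by smoothness) and match them with irreducible components of $\N^s$. Case 3 is handled correctly, the middle range $-\frac{1}{2}\deg(\L) < d \leq -1$ is handled correctly (each $\widetilde{\N}{}^s_d$ is a nonempty open in the irreducible $\widetilde{\N}_d$), and you have correctly isolated the boundary $d = -\frac{1}{2}\deg(\L)$ as the only place where the argument is not yet a proof. The paper itself gives no proof of this corollary; it is stated to follow ``in particular'' from Proposition~\ref{stable}, which only identifies $\widetilde{\N}{}^s$ (or $\widetilde{\N}{}^s\sqcup Z$) as a resolution and says nothing about how many pieces the boundary component contributes.

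However, neither of your two proposed routes through the boundary can succeed, and you already have the tools to see this. Your own construction --- a generic non-split extension $0 \to \lambda \to E \to \lambda^{-1} \to 0$ with $\varphi$ induced by the unique (up to scale) section of $\lambda^{\otimes 2}\otimes\L \simeq \O_X$ --- works verbatim for \emph{every} square root $\lambda$ of $\L^{-1}$: since the only $\varphi$-invariant line subbundle of $E$ is $\ker(\varphi) = \lambda$ with $\deg(\lambda) = -\frac{1}{2}\deg(\L) < 0$, the pair $(E,\varphi)$ is a stable nilpotent Higgs bundle. Distinct square roots give Higgs bundles with non-isomorphic $\ker(\varphi)$, so these lie in distinct irreducible components $\N_L$ of $\N$ (the ones indexed by square roots, per the enumeration in the introduction), and $\N^s$ is open in $\N$, so these produce $2^{2g}$ distinct irreducible components of $\N^s$. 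There is no canonical square root to single out, and a flat degeneration between components with non-isomorphic kernels would contradict the separation of those loci in $\N$. This makes the stated count of $\frac{1}{2}\deg(\L)$ (resp.~$\frac{1}{2}\deg(\L)+1$) look like it should read $\frac{1}{2}\deg(\L) - 1 + 2^{2g}$ (resp.~$\frac{1}{2}\deg(\L) + 2^{2g}$, to the extent the square-root components of $\N$ have nonempty stable locus when $\deg(\L) \leq 2g-2$), i.e.\ the boundary component appears to be undercounted by $2^{2g}-1$. So the gap you flagged is genuine and is not resolved in the paper; if you want a provable statement you should either carry the $2^{2g}$ through the final count or add a hypothesis that picks out a single square root.
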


\bibliographystyle{amsalpha}
\bibliography{GSR}

\end{document}